\newcommand{\be}{\begin{equation}} 
\newcommand{\ee}{\end{equation}}
\newcommand{\bea}{\begin{eqnarray}} 
\newcommand{\eea}{\end{eqnarray}}
\newcommand{\bean}{\begin{eqnarray*}} 
\newcommand{\eean}{\end{eqnarray*}}
\def\mn{|\!\!|}
\def\mn2{|\!\!|_{M^{d/2}}}
\newcommand{\io}{\int_\Omega}
\newcommand{\intdom}{\int_{\partial\Omega}}
\newcommand{\vbar}{\overline{v}}
\newcommand{\Lom}[1]{L^{#1}(\Omega)}
\newcommand{\Tmax}{T_{max}}
\newcommand{\Ombar}{\overline{\Omega}}
\newcommand{\utilde}{\widetilde{u}}
\newcommand{\vtilde}{\widetilde{v}}
\newcommand{\vhat}{\hat{v}}
\newtheorem{theorem}{Theorem}
\newtheorem{lemma}[theorem]{Lemma}
\theoremstyle{definition}
\theoremstyle{remark}
\newtheorem{remark}[theorem]{Remark}
\numberwithin{equation}{section}
\numberwithin{theorem}{section}
\author[P. Knosalla]{Piotr Knosalla}
\address[P. Knosalla]{
Institute of Physics, Opole University, Oleska 48, 45-052 Opole, Poland\\
ORCID: 0000-0002-3594-0938}
\email{piotr.knosalla@uni.opole.pl}
\author[J. Lankeit]{Johannes Lankeit}
\address[J. Lankeit]{Leibniz Universität Hannover, Institut für Angewandte Mathematik, Welfengarten 1, 30167 Hannover, Germany\\
ORCID: 0000-0002-2563-7759}
\email{lankeit@ifam.uni-hannover.de}
\thanks{}
\title[Asymptotics of a chemotaxis-consumption-growth model]{
Asymptotics of a chemotaxis-consumption-growth model with nonzero Dirichlet conditions}
\newcommand\norm[1]{\left\lVert#1\right\rVert}
\newcommand{\normA}[2][]{\|#2\|_{#1}} 
\newcommand{\Vnorm}[2][t_1,t]{\normA[V,#1]{#2}}
\begin{document}
\begin{abstract}
This paper concerns the asymptotics of certain parabolic-elliptic chemotaxis-consumption systems with logistic growth and constant concentration of chemoattractant on the boundary. First we prove that in two dimensional bounded domains there exists a unique global classical solution which is uniformly bounded in time, and then we show that if the concentration of chemoattractant on the boundary is sufficiently low then the solution converges to the positive steady state as time goes to infinity.
\end{abstract}

\keywords{chemotaxis, consumption of chemoattractant, nonzero boundary conditions, logistic growth, local existence, global existence, boundedness, long-time asymptotics}

\subjclass[2020]{35B40; 35A01; 35K20; 92C17; 35Q92}

\date{\today}
\maketitle

\baselineskip=16pt

\section{Introduction}

    We denote by $u=u(x,t)$ the density of a colony of bacteria living in a bounded domain $\Omega\subset \mathbb{R}^n$ filled with water with $C^{2,\sigma}$-boundary for some $\sigma\in(0,1)$, and by $v=v(x,t)$ the density of chemoattractant which is dissolved in the water, $x\in \Omega,$  $t>0$. Bacteria are random walking and moving towards higher concentration of chemoattractant (see papers by Keller and Segel, \cite{KeSe, KeSe1, KeSe2}). The growth of the colony is logistic. We assume that the concentration of chemoattractant is positive and constant along the boundary $\partial \Omega$ of the domain. We further assume that $v$ is diffusing and is consumed by the bacteria. Neglecting the effect of the fluid motion and assuming that the diffusion of $v$ is very high, we end up with the following parabolic-elliptic system describing the evolution of the model,   
\begin{equation}\label{systPE}
\left\{ \begin{array}{l}
 u_t=\nabla\cdot(\nabla u-u\nabla v)+\lambda u-\mu u^2, \\
 \partial_\nu u-u\partial_\nu v|_{\partial\Omega}=0,\\
 -\Delta v=-uv,\\
 v|_{\partial\Omega}=\overline{v},\\
  u(x,0)=u_0(x).
 \end{array} \right.
\end{equation}
Here $\lambda,$ $\mu,$ $\overline{v}$ are positive constants, and $\nu=\nu(x)$ is the outward normal vector to the boundary at point $x\in\partial \Omega$. The initial bacteria density $u_0$ is assumed to be a positive and continuous function. This paper is devoted to the question of global existence and asymptotic behaviour of solutions of this system.

System \eqref{systPE} is a simplification of the following parabolic-parabolic system 
\begin{equation}\label{systPP}
\left\{ \begin{array}{l}
 u_t=\nabla\cdot(\nabla u-u\nabla v)+\lambda u-\mu u^2, \\
 v_t=\Delta v-vu,\\
  u(x,0)=u_0(x),\quad v(x,0)=v_0(x).
 \end{array} \right.
\end{equation} 
which can be considered with Neumann, Dirichlet or Robin boundary condition imposed on the chemoattractant $v$ and no-flux boundary condition on $u$. Of course, the type of boundary conditions imposed on $v$ describes different physical situations and may lead to serious obstructions and challenges in the mathematical analysis of particular system. Global existence and convergence to the constant steady state $(\frac{\lambda}{\mu},0)$ of solutions of system \eqref{systPP}, with homogeneous Neumann boundary condition, were considered by Lankeit and Wang in \cite{LankeitWang}. The steady state problem of this system with Dirichlet boundary condition was studied by Knosalla and Wróbel, \cite{KW}. The most popular and best analyzed in the literature is the growthless system

\begin{equation}\label{systPP:ng}
\left\{ \begin{array}{l}
 u_t=\nabla\cdot(\nabla u-u\nabla v), \\
 v_t=\Delta v-vu,\\
  (\partial_\nu u-u\partial_\nu v)|_{\partial\Omega}=0,\\
  u(x,0)=u_0(x),\quad v(x,0)=v_0(x)
 \end{array} \right.
\end{equation}
with $\partial_{\nu}v|_{\partial\Omega}=0. $

 Systems \eqref{systPP}, \eqref{systPP:ng} are the fluid-free versions of the famous chemotaxis-consumption-fluid model proposed by Tuval et al. in \cite{Tuval}. The existence of global solutions of the chemotaxis-fluid model without growth and under zero Neumann boundary condition on $v$ was studied by Winkler in \cite{Winkler}. Braukhoff and Tang, \cite{BrTang}, and Braukhoff, \cite{Braukhoff}, considered system \eqref{systPP:ng}, \eqref{systPP},  respectively, with fluid and Robin boundary condition on $v$.  System \eqref{systPP} with fluid and Dirichlet boundary condition on $v$ was analyzed by Black and Winkler, \cite{BlackWinkler}, Black and Wu, \cite{BlackWu, BlackWu1}. Wang, Winkler and Xiang, \cite{WangWinklerXiang1, WangWinklerXiang2} considered global solvabilty of system \eqref{systPP:ng} with fluid and Dirichlet boundary condition. \\
Tao and Winkler in \cite{TWin} showed the global existence of solutions of system \eqref{systPP:ng}  and its convergence to the unique constant steady state $(\frac{1}{|\Omega|}\int_{\Omega}u_0,0)$. In \cite{BrLank}, Braukhoff and Lankeit obtained existence and uniqueness of non-constant stationary solutions of \eqref{systPP:ng} with inhomogeneous Robin boundary condition on $v$, and in \cite{FLM} Fuest et al.\ showed global existence of a solution of the parabolic-elliptic counterpart of \eqref{systPP:ng} and its convergence to this steady state. 
The global solvability of the parabolic-elliptic system with tensor-valued sensitivities was shown by Ahn, Kang and Lee in \cite{AhnKangLee}; with nonzero Dirichlet boundary conditions for the signal and general sensitivity functions, a proof of global existence and, in special cases, convergence can be found in the work \cite{YangAhn} by Yang and Ahn.
Existence, uniqueness and boundary layer profile of solutions of the stationary problem of \eqref{systPP:ng} with Dirichlet boundary condition were studied by Lee et al. in \cite{LeeWangYang}. Hong et al. in \cite{HongWang} considered asymptotic behaviour of system \eqref{systPP:ng} in one space dimension with Dirichlet boundary condition. In \cite{LankeitWinkler} Lankeit and Winkler obtained the global existence of solutions of this system under the assumption of radial symmetry. Recently Li and Li in \cite{LiLi} showed local asymptotic stability of steady state solution of \eqref{systPP:ng} with non-zero Dirichlet or Robin boundary condition. Unique solvability of the stationary problem of a system with logarithmic sensitivity was proven by Ahn and Lankeit \cite{AhnLankeit}.
It is also worth pointing out the papers by Knosalla and Nadzieja, \cite{KNadz}, and Knosalla, \cite{KNos, Knos1, Knos2},   
which concern the aerotaxis model. Evolution of the model considered therein is described by the system \eqref{systPP:ng} with appropriately chosen sensitivity function, called the {\it energy}, and with inhomogeneous Neumann or Dirichlet boundary condition imposed on  $v$. For more references concerning systems like \eqref{systPP:ng}, \eqref{systPP} see the recent overviewing paper by Lankeit and Winkler, \cite{LankeitWinklerD}.

\section{Main results}
In this section we present the main results of this paper. Assumptions that we will pose regularly are the following: 
\begin{subequations}\label{assumptions}
\begin{align}
n&=2\label{ass:dimension}\\
\Omega&\subset ℝ^n \text{ is a bounded domain with } C^{2,σ}\text{-boundary, } σ\in(0,1)\label{ass:domain}\\
μ&>0,λ\ge 0, m>0\label{ass:params}\\
u_0&\in W^{1,p_0}(\Omega),\; p_0>n,\;\; u_0 > 0 \text{ in }\Ombar,\;\; \io u_0=m\label{ass:init}
\end{align}
\end{subequations}
Solutions to \eqref{systPE} additionally depend on the remaining parameter $\vbar>0$. However, in a proof concerning long-time behaviour in Section~\ref{sec:asymptotics}, it will be crucial that a certain bound remains independent of $\vbar$, and we hence indicate assumptions on $\vbar$ separately.
(Note that ``Assume \eqref{assumptions}. There is $C>0$ such that for all $\vbar>0$ \ldots'' admits dependence of $C$ on $\Omega$, $μ$, $λ$, $u_0$, but not on $\vbar$.) \\

We begin with local existence of solutions, namely using the semigroup approach due to Amann \cite{Amann1}, we show our first result. 
\begin{theorem}[Local existence]\label{locEx}
Let $n\in ℕ$, assume \eqref{ass:domain}, \eqref{ass:params}, \eqref{ass:init}. Then for every $\vbar>0$, 
there exists a unique local solution 
$$
u\in C([0,T_{\max} ); W^{1,p}(\Omega))\cap C^{2,1}(\overline{\Omega}\times(0,T_{\max})),
$$
$$
v\in C([0,T_{\max});C^{2,\sigma_*}(\overline{\Omega}))\cap C^{0,1}((0,T_{\max});C^{2,\sigma_*}(\overline{\Omega})) 
$$
 for some $\sigma_*\in(0,1)$ of \eqref{systPE}, where $T_{\max}\in(0,\infty]$ is the maximal time of existence of the solution. This solution satisfies $u(x,t)>0$ and $\overline{v}> v(x,t)>0$ for $(x,t)\in\overline{\Omega}\times(0,T_{\max}).$ Moreover, if for every $T>0$ there exists a constant $C(T)$ such that  
\begin{equation}\label{cond:globalex}
\sup_{t\in[0,T]\cap [0,T_{\max})}\normA[L^\infty(\Omega)]{u(t)}<C(T),
\end{equation}  
then this solution is global, that is $T_{\max}=\infty$.
\end{theorem}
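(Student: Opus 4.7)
The plan is to exploit the parabolic--elliptic structure of \eqref{systPE} by solving the elliptic equation for $v$ in terms of $u$, reducing the system to a quasilinear parabolic problem for $u$ alone, and then invoking Amann's theory \cite{Amann1}.

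\textit{Elliptic subproblem.} For any $u\in W^{1,p_0}(\Omega)\hookrightarrow L^\infty(\Omega)$ with $u\ge 0$, standard linear elliptic theory applied to
\begin{equation*}
-\Delta v+uv=0 \text{ in }\Omega,\qquad v|_{\partial\Omega}=\vbar,
\end{equation*}
yields a unique solution $v=v[u]\in W^{2,q}(\Omega)$ for every $q<\infty$, hence $v[u]\in C^{1,\sigma_*}(\overline{\Omega})$; the implicit function theorem shows that $u\mapsto v[u]$ is smooth between the relevant function spaces. Since the zero-order coefficient $u$ is nonnegative, the weak maximum principle gives $0\le v\le\vbar$. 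Applied to $\vbar-v$, which solves $-\Delta(\vbar-v)+u(\vbar-v)=u\vbar\ge 0$ with zero boundary data, the strong maximum principle produces $v<\vbar$ in $\Omega$ as soon as $u\not\equiv 0$; $v>0$ in $\overline{\Omega}$ follows analogously from the strong minimum principle and the positive boundary data.

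\textit{Local existence and positivity.} Substituting $v=v[u]$ into the $u$-equation and using $\Delta v[u]=u\,v[u]$ turns it into the quasilinear divergence-form problem
\begin{equation*}
u_t=\nabla\cdot(\nabla u-u\nabla v[u])+\lambda u-\mu u^2,\qquad (\nabla u-u\nabla v[u])\cdot\nu|_{\partial\Omega}=0,\qquad u(\cdot,0)=u_0,
\end{equation*}
which fits into Amann's framework for quasilinear parabolic problems in the $W^{1,p}$-setting, the required smoothness of $u\mapsto v[u]$ having been established above. This furnishes the unique maximal solution with the stated regularity for $u$. Positivity $u>0$ is obtained by applying the parabolic strong maximum principle to the linear equation satisfied by $u$ (with $v[u]$ now frozen as a coefficient) together with $u_0>0$; the strict inequalities $0<v<\vbar$ then follow from the elliptic step. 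The regularity of $v$, including the Lipschitz-in-time part $C^{0,1}((0,\Tmax);C^{2,\sigma_*}(\overline{\Omega}))$, is a consequence of elliptic Schauder estimates applied to $v[u(\cdot,t)]$ and to the difference $v[u(\cdot,t)]-v[u(\cdot,s)]$ via the continuity of $u$ in time.

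\textit{Extension criterion.} Suppose $\sup_{[0,T]\cap[0,\Tmax)}\normA[L^\infty(\Omega)]{u(\cdot,t)}<\infty$ for some $T>0$. Elliptic $W^{2,q}$-regularity applied to the $v$-equation then yields, uniformly on $[0,T]\cap[0,\Tmax)$, a bound on $\|v(\cdot,t)\|_{C^{1,\sigma_*}(\overline{\Omega})}$, and in particular on $\|\nabla v(\cdot,t)\|_{L^\infty(\Omega)}$. With these uniformly bounded coefficients, viewing the $u$-equation as a linear parabolic equation and combining variation-of-constants with the smoothing estimates $L^\infty\to W^{1,p}$ for the Neumann heat semigroup delivers a uniform $W^{1,p}$-bound on $u(\cdot,t)$. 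Amann's blow-up criterion then forces $\Tmax>T$, contradicting maximality unless $\Tmax=\infty$. The main technical point is exactly this bootstrap from an $L^\infty$-bound on $u$ to a $W^{1,p}$-bound, which is routine once the elliptic step supplies the uniform $L^\infty$-bound on $\nabla v$.
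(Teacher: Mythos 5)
Your overall strategy --- solve the elliptic equation to get $v=v[u]$, reduce to a scalar parabolic problem for $u$, and invoke Amann's theory --- is the same as the paper's, but the proposal passes over the one point that makes this theorem nontrivial: the boundary condition for $u$. Since $v$ satisfies a Dirichlet condition, $\partial_\nu v$ does not vanish on $\partial\Omega$, so the flux condition reads $\partial_\nu u = u\,\partial_\nu v[u]$ on $\partial\Omega$; this is an inhomogeneous, nonlinear boundary condition whose coefficient depends \emph{nonlocally} on $u$ through the elliptic solve, and it is not covered by a routine citation of Amann's quasilinear theorem. The paper devotes essentially all of Section~\ref{sec:localexistence} to exactly this issue: it works in the interpolation--extrapolation scale $W^{2\alpha-2,p}_{\mathcal{B}}(\Omega)$, introduces the boundary extension operator $\mathcal{B}^c$ with the estimate \eqref{bdop:est}, moves the boundary datum $g=\hat u\,\partial_\nu v$ into the right-hand side via the generalized variation-of-constants formula (the term $A_{\alpha-1}\mathcal{B}^c g$), and runs a contraction argument in $C([0,T];W^{1,p}(\Omega))$. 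Without some substitute for this machinery, ``fits into Amann's framework'' is an assertion, not a proof. A related smaller gap: during the iteration the trial functions $\hat u$ need not be nonnegative, so the elliptic problem must be posed with $(\hat u)_+$ (as in \eqref{systPE:mod}) for the maximum principle and solvability to apply; you only define $v[u]$ for $u\ge 0$, which is circular before positivity of $u$ is known.

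The same issue undermines your extension criterion. Variation of constants with $L^\infty\to W^{1,p}$ smoothing for the \emph{Neumann} heat semigroup is the standard argument when the no-flux condition reduces to $\partial_\nu u=0$; here it does not, and the boundary term $u\,\partial_\nu v$ would have to be controlled in the trace space $W^{1-1/p,p}(\partial\Omega)$, which an $L^\infty$ bound on $u$ alone does not provide. The paper instead rewrites the equation in non-divergence form (so that the source $(\lambda+1)u-(\mu+v)u^2$ contains no derivatives of $u$), applies LSU maximal $W^{2,1}_p$ regularity to the linear parabolic problem with the Robin-type boundary condition $\partial_\nu W - W\partial_\nu v=0$, and then uses the embedding into $C^{1+\sigma,(1+\sigma)/2}$ for $p>n+2$ to recover the $W^{1,p}$ bound and exclude blow-up. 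Your maximum-principle arguments for $u>0$ and $0<v<\vbar$ are fine and essentially coincide with the paper's Lemma~\ref{u:lower:sol} and Lemma~\ref{linElest}.
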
  
This theorem will be proven in Section~\ref{sec:localexistence}.

Next, for a two dimensional domain we present the following boundedness result. 
\begin{theorem}[Global existence]\label{Glo:ex:thm}
Assume \eqref{assumptions} and let $\vbar>0$. Then the 
solution of \eqref{systPE} is globally defined and uniformly bounded in time, that is it satisfies
\begin{equation}\label{unif:bdd}
\sup_{t\in[0,\infty]}\normA[L^\infty(\Omega)]{u(t)}<\infty.
\end{equation}
\end{theorem}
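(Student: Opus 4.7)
The plan is to verify the extensibility criterion \eqref{cond:globalex} of Theorem~\ref{locEx}, and in fact to prove the stronger uniform estimate \eqref{unif:bdd}, via an $L^1\to L^p\to L^\infty$ bootstrap in which elliptic $W^{2,q}$-regularity for the Dirichlet problem for $v$ is the bridge between $u$- and $\nabla v$-bounds. First, I would derive an $L^1$-bound on $u$: testing the first equation of \eqref{systPE} against $1$, using the flux identity $\partial_\nu u-u\partial_\nu v|_{\partial\Omega}=0$ and Cauchy-Schwarz, yields $\frac{d}{dt}\io u\le \lambda\io u-\frac{\mu}{|\Omega|}\bigl(\io u\bigr)^2$, hence $\sup_{t<\Tmax}\io u\le M:=\max\{m,\lambda|\Omega|/\mu\}$. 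Substituting $\Delta v = uv$ and rewriting the first equation in non-divergence form as $u_t+\nabla v\cdot \nabla u = \Delta u+u[\lambda-(\mu+v)u]$ exposes an extra positive quadratic damping $-u^2v$ coming from the consumption, on top of $-\mu u^2$; this is what one hopes to exploit. The core of the argument is the $L^p$-estimate for $p\ge 2$: testing the first equation against $u^{p-1}$ and using the flux boundary condition to cancel the boundary contributions from $\Delta u$ and $\nabla\cdot(u\nabla v)$, then processing the cross term via integration by parts and $-\Delta v = -uv$, produces
\[
\frac{1}{p}\frac{d}{dt}\io u^p + \frac{4(p-1)}{p^2}\io|\nabla u^{p/2}|^2 + \frac{p-1}{p}\io u^{p+1}v + \mu\io u^{p+1}\le \frac{p-1}{p}\intdom u^p\partial_\nu v + \lambda\io u^p.
\]

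The \emph{principal obstacle} is the boundary integral $\intdom u^p\partial_\nu v$, which is strictly positive by the Hopf lemma applied to $\vbar-v\ge 0$ and cancels with nothing. My plan to control it has three ingredients: (i) elliptic regularity for the Dirichlet problem $-\Delta v=-uv$, $v|_{\partial\Omega}=\vbar$, combined with $0<v<\vbar$, gives $\|v\|_{W^{2,q}(\Omega)}\le C(\vbar)(\|u\|_{L^q(\Omega)}+1)$ for every $q\in(1,\infty)$; (ii) in $n=2$, the embedding $W^{2,q}\hookrightarrow C^{1,\alpha}(\Ombar)$ for $q>2$ yields $\|\partial_\nu v\|_{L^\infty(\partial\Omega)}\le C(\vbar)(\|u\|_{L^q}+1)$; (iii) the trace-interpolation $\intdom u^p = \|u^{p/2}\|_{L^2(\partial\Omega)}^2\le C(\|u\|_{L^p}^p+\|u\|_{L^p}^{p/2}\|\nabla u^{p/2}\|_{L^2})$ (valid in two dimensions), together with Young's inequality, permits absorption into $\io|\nabla u^{p/2}|^2$. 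The estimate then closes inductively: for a sequence $p_0=2<p_1<\cdots\to\infty$ one feeds the $L^{p_{k-1}}$-bound into step~(ii) to quantify $\|\partial_\nu v\|_{L^\infty}$, and then couples the resulting inequality with the Lebesgue interpolation $\io u^{p_k+1}\ge c\bigl(\io u^{p_k}\bigr)^{(p_k+1)/p_k}/|\Omega|^{1/p_k}$ and the $L^1$-bound from above to obtain a superlinear ODE inequality whose solutions stay bounded in time. The most delicate case is $p_0=2$: there the $\vbar$-dependent coefficient on the right-hand side must be defeated using both $\mu\io u^3$ and the auxiliary dissipation $\io u^3 v$ identified earlier, which is why the structural contribution of the consumption $-uv$ really matters.

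Finally, once $\sup_t\|u(\cdot,t)\|_{L^q(\Omega)}<\infty$ is known for some $q>2$, ingredients (i)--(ii) deliver $\sup_t\|\nabla v\|_{L^\infty(\Omega)}<\infty$, and a standard Moser-Alikakos iteration applied to $u_t=\Delta u-\nabla\cdot(u\nabla v)+\lambda u-\mu u^2$ with the now-bounded drift $\nabla v$ upgrades the $L^q$-control to the uniform $L^\infty$-estimate \eqref{unif:bdd}; Theorem~\ref{locEx} then gives $\Tmax=\infty$. The entire argument is confined to two dimensions because the required trace-interpolation of $u^{p/2}$ and the embedding $W^{2,q}\hookrightarrow C^{1,\alpha}$ for $q$ only slightly above $n$ are what allow the simultaneous bootstrap of $u$ and $\nabla v$ starting from an $L^1$-bound on $u$.
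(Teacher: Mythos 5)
Your overall architecture ($L^1\to L^2\to L^p\to L^\infty$, with elliptic regularity for $v$ as the bridge and a final Moser--Alikakos step) matches the paper's, and your $L^1$ bound and concluding iteration are fine. However, the core of your argument goes through exactly the door that the paper deliberately avoids, and the proposed way of forcing that door open is circular. By integrating by parts twice in the taxis term you produce the boundary integral $\frac{p-1}{p}\intdom u^p\partial_\nu v$, and your control of it hinges on $\normA[L^\infty(\partial\Omega)]{\partial_\nu v}\le C(\vbar)(\normA[\Lom q]{u}+1)$ for some $q>2$ (since in $n=2$ one needs $W^{2,q}$ with $q>2$ to reach $C^1(\Ombar)$). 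At the base step $p_0=2$ the only a priori information is $\normA[\Lom1]{u}\le M$, from which elliptic theory yields at best $\nabla v\in L^r(\Omega)$ for $r<2$ and nothing resembling a pointwise bound on $\partial_\nu v$ along $\partial\Omega$. So the quantity you need to start the induction is precisely the quantity the induction is supposed to produce; the bootstrap cannot be initialized. (A secondary problem: even granting $\normA[L^\infty(\partial\Omega)]{\partial_\nu v}\lesssim\vbar(\normA[\Lom q]{u}+1)$, after the trace interpolation and Young the residual term at $p=2$ behaves like $\vbar^2\normA[\Lom q]{u}^2\normA[\Lom2]{u}^2$ with $q>2$, which is superlinear in uncontrolled norms, and the extra dissipation $\io u^3v\le\vbar\io u^3$ only enlarges the absorbing coefficient from $\mu$ to at most $\mu+\vbar$ --- it cannot defeat an arbitrary elliptic-regularity constant, and the theorem is claimed for all $\vbar>0$.)

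The paper's proof sidesteps $\partial_\nu v$ entirely. It never converts the taxis term into $\io u^p\Delta v$ plus a boundary term; instead it keeps $\io u\nabla u\cdot\nabla v\le\frac12\io|\nabla u|^2+\frac12\io u^2|\nabla v|^2$ and estimates $\int_s^t\io u^2|\nabla v|^2$ by Cauchy--Schwarz in space--time against $\normA[L^4(\Omega\times(s,t))]{u}^2$ and $\normA[L^4(\Omega\times(s,t))]{\nabla v}^2$. The first factor is handled by the parabolic Gagliardo--Nirenberg bound in the seminorm $\Vnorm[s,t]{u}$ (Lemma~\ref{lem:normV}, following \cite{lankeit_smoothing}); the second is made small by subdividing time into finitely many intervals on which $\int\int|\nabla v|^4\le\frac{1}{4c_7^2}$, which is possible because $\int_t^{t+\tau}\io|\nabla v|^4\le c_5\vbar^4$ follows from the $L^1$ bound and the spatio-temporal $L^2$ bound on $u$ alone (Lemmas~\ref{lem:regularityestimatev} and~\ref{lem:firstbound}). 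This yields the uniform $L^2$ bound on $u$ without any boundary information on $v$ beyond $0\le v\le\vbar$, and from there the $L^p$ and $L^\infty$ steps proceed much as you describe. To repair your proof you would need either to replace your $p=2$ step by an argument of this type, or to find a genuinely different way to control $\intdom u^p\partial_\nu v$ from $L^1$ data only --- which is precisely the difficulty the authors flag in Section~2 as the reason for their choice of method.
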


In the proof of global boundedness of solutions of the parabolic-elliptic version of \eqref{systPP:ng} with Robin boundary conditions for $v$ in \cite[p.265]{FLM}, a crucial boundedness assertion results from a study of $\frac{d}{dt} \io u^p$. 
When integrating by parts twice in the integral stemming from the taxis term in order to obtain $Δv$ (so as to plug in the second equation), in \cite{FLM} a boundary term $\intdom u^p(γ-v)g$ arises, which is easily controlled by $\intdom u^p$ or $\io |∇u^{\frac p2}|^2$. In \eqref{systPE}, in contrast, the boundary condition no longer allows to control normal derivatives of $v$, which arise. In Section \ref{sec:boundedness}, we therefore have to use an approach to controlling the taxis term that is less sensitive to the precise boundary conditions.\\

Apart from global well-posedness of \eqref{systPE}, we are also interested in the asymptotic behaviour of its solutions. By $(U,V)$ we denote a stationary solution of (\ref{systPE}), i.e. solution of the following elliptic problem
\begin{equation}\label{std:prob}
\left\{ \begin{array}{l}
 0=\nabla\cdot(\nabla U-U\nabla V)+\lambda U-\mu U^2, \\
 \partial_\nu U-U\partial_\nu V|_{\partial\Omega}=0,\\
 -\Delta V=-UV,\\
 V|_{\partial\Omega}=\overline{v}>0. 
 \end{array} \right.
\end{equation}
One may check that $(0,\overline{v})$ is a unique constant non-negative solution of this system, this solution is dynamically unstable. Knosalla and Wróbel in \cite{KW} showed the following result concerning (\ref{std:prob}).
\begin{theorem}
Assume \eqref{ass:domain}.
For $\overline{v}>0$ there exists a positive non-constant classical solution $(U,V)\in\left( C^{2,\sigma}(\overline{\Omega}) \right)^2$ of (\ref{std:prob}). Any such solution satisfies
\begin{equation}\label{U:std:est}
e^{V(x)-\overline{v}}\frac{\lambda}{\mu}\leq U(x)\leq \frac{\lambda}{\mu}e^{V(x)}\quad \text{ for each }x\in\overline{\Omega},
\end{equation}
\begin{equation}\label{V:std:est}
0< V(x)\leq \overline{v}\quad\text{ for each }x\in\overline{\Omega}.
\end{equation}
Moreover there exists a positive constant $v^\star(\lambda,\mu)$, such that for $\overline{v}<v^\star$ the pair $(U,V)$ is a unique positive solution of (\ref{std:prob}).
\end{theorem}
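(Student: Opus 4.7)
The plan is to establish, in order, the pointwise bounds, then existence, then uniqueness for small $\overline{v}$. A single substitution organizes the argument: setting $w := e^{-V} U$, the identity $\nabla U - U \nabla V = e^V \nabla w$ converts the first equation of \eqref{std:prob} into the logistic-type equation
\[
\Delta w + \nabla V \cdot \nabla w + w(\lambda - \mu e^V w) = 0,
\]
and the mixed boundary condition becomes the homogeneous Neumann condition $\partial_\nu w = 0$ on $\partial\Omega$. At any maximum point $x_0 \in \overline{\Omega}$ of $w$ one has $\nabla w(x_0) = 0$ (using the Neumann condition at the boundary together with the fact that $w$ restricted to $\partial\Omega$ is also maximal there) and $\Delta w(x_0) \leq 0$, so the equation forces $w(x_0) \leq \lambda/(\mu e^{V(x_0)}) \leq \lambda/\mu$, hence $U = e^V w \leq \frac{\lambda}{\mu} e^V$. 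The dual minimum argument yields $w(x) \geq \lambda/(\mu e^{V(x_0)}) \geq \lambda/(\mu e^{\overline{v}})$ (using $V \leq \overline{v}$, proved next), giving the lower bound in \eqref{U:std:est}. For $V$ itself, the second equation reads $-\Delta V + U V = 0$ with $V|_{\partial\Omega} = \overline{v} > 0$; since $U \geq 0$, the strong maximum principle yields $V > 0$ throughout $\overline{\Omega}$, while applying the weak maximum principle to $Z := V - \overline{v}$, which solves $-\Delta Z + U Z = -U \overline{v} \leq 0$ with vanishing boundary data, gives $V \leq \overline{v}$.

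For existence I would run a Schauder fixed-point argument on $K := \{V \in C^{0,\sigma}(\overline{\Omega}) : 0 \leq V \leq \overline{v}\}$. Given $V \in K$, the logistic-type equation above (with coefficient $e^V$ and homogeneous Neumann data) admits a unique positive solution $w \in C^{2,\sigma}(\overline{\Omega})$ by the sub-/supersolution method: $w \equiv \lambda/\mu$ is a supersolution, any sufficiently small positive constant is a subsolution, and the strict monotonicity of $w \mapsto \lambda - \mu e^V w$ provides uniqueness. Set $U := e^V w$ and let $\tilde V \in C^{2,\sigma}(\overline{\Omega})$ solve the linear problem $-\Delta \tilde V + U \tilde V = 0$, $\tilde V|_{\partial\Omega} = \overline{v}$; the bounds above show $\tilde V \in K$. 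The map $F: V \mapsto \tilde V$ is continuous on $K$ and compact by elliptic Schauder estimates, which furnish a uniform $C^{2,\sigma}$ bound on $F(K)$. Schauder's theorem produces a fixed point, and the resulting pair $(U,V) = (e^V w, V)$ is a positive classical solution of \eqref{std:prob}; non-constancy is automatic, since $V \equiv$ const would, by the second equation and $U > 0$, force that constant to be $0$, contradicting $V|_{\partial\Omega} = \overline{v} > 0$.

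For uniqueness when $\overline{v}$ is small, I would compare two solutions through $w_i := e^{-V_i} U_i$. Subtracting the Dirichlet problems yields
\[
-\Delta(V_1 - V_2) + U_1 (V_1 - V_2) = -(U_1 - U_2) V_2
\]
with zero boundary data, so elliptic $W^{2,p}$ estimates bound $\|V_1 - V_2\|_{W^{2,p}}$ by $C \|V_2\|_{L^\infty} \|U_1 - U_2\|_{L^p} \leq C \overline{v} \|U_1 - U_2\|_{L^p}$. Subtracting the two logistic equations for $w_1, w_2$ (whose difference satisfies a linear equation with source terms involving $\nabla(V_1 - V_2) \cdot \nabla w_2$ and $(e^{V_1} - e^{V_2}) w_1 w_2$) and substituting the preceding bound produces a self-estimate of the form $\|w_1 - w_2\|_{C^{0,\sigma}} \leq C \overline{v} \|w_1 - w_2\|_{C^{0,\sigma}}$, which forces $w_1 = w_2$, hence $(U_1, V_1) = (U_2, V_2)$, as soon as $\overline{v} < v^\star(\lambda,\mu) := 1/C$. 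The main obstacle is precisely this last contraction estimate: one must verify that the constant $C$ depends only on $\Omega, \lambda, \mu$ and on the a priori bounds already established, and not on the particular solutions, so that the smallness of $\overline{v}$ genuinely produces a contraction. Existence and the pointwise bounds are comparatively routine once the substitution $w = e^{-V} U$ has been made.
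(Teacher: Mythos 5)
First, a structural point: the paper does not prove this theorem at all --- it is quoted verbatim from Knosalla and Wr\'obel \cite{KW} and used as a black box, so there is no in-paper proof to compare your argument against. Judged on its own terms, your strategy (the substitution $w=e^{-V}U$ turning the $U$-equation into a logistic Neumann problem, maximum-principle bounds, Schauder fixed point for existence, and a contraction estimate for uniqueness at small $\overline{v}$) is the natural and essentially correct route, and your pointwise bounds \eqref{U:std:est}--\eqref{V:std:est} are derived correctly: in particular your claim that $\Delta w(x_0)\le 0$ even at a boundary maximum is justified, since $\nabla w(x_0)=0$ forces $e^{T}D^2w(x_0)e\le 0$ for every inward-pointing direction $e$, hence (by $e\mapsto -e$ symmetry of the quadratic form) for all directions. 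Two repairs are needed, though. In the existence step the set $K\subset C^{0,\sigma}(\overline{\Omega})$ is too weak to even define the map, because the $w$-equation contains the coefficient $\nabla V$; you should instead work in $C^{1}(\overline{\Omega})$ and note that the required uniform bound on $F(K)$ comes for free from the $V$-equation alone, since its coefficient $U=e^{V}w$ is bounded in $L^{\infty}$ by $\frac{\lambda}{\mu}e^{\overline{v}}$ independently of the input, so $W^{2,p}$ elliptic estimates give a fixed $C^{1,\alpha}$ ball that $F$ maps into itself compactly.

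The substantive gap is exactly where you flag uncertainty, but the missing ingredient is identifiable and you should name it: the difference $w_1-w_2$ solves a homogeneous \emph{Neumann} problem $\Delta(w_1-w_2)+\nabla V_1\cdot\nabla(w_1-w_2)+c\,(w_1-w_2)=g$ with $c=\lambda-\mu e^{V_1}(w_1+w_2)$, and without a sign condition on $c$ this operator need not be invertible, so no self-estimate of the form $\|w_1-w_2\|\le C\overline{v}\|w_1-w_2\|$ can be extracted. The point is that your lower bound $w_i\ge \frac{\lambda}{\mu}e^{-\overline{v}}$ gives $c\le \lambda(1-2e^{-\overline{v}})$, which is negative and bounded away from zero once $\overline{v}<\ln 2$; this coercivity (equivalently, the maximum principle for $-\Delta-\nabla V_1\cdot\nabla-c$) yields $\|w_1-w_2\|_{L^\infty}\le \frac{1}{|c|_{\min}}\|g\|_{L^\infty}$ with a constant depending only on $\lambda,\mu,\overline{v}$. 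Combined with $\|V_1-V_2\|_{W^{2,p}}\le C\overline{v}\|U_1-U_2\|_{L^p}$ (whose constant is uniform because $\|U_1\|_{\infty}\le\frac{\lambda}{\mu}e^{\overline{v}}$) and a uniform $C^{1,\alpha}$ bound on $w_2$, the source $g=-\nabla(V_1-V_2)\cdot\nabla w_2+\mu(e^{V_1}-e^{V_2})w_2^2$ is of size $O(\overline{v})\|w_1-w_2\|_{L^\infty}$, and the contraction closes. So the proof is completable, but as written the uniqueness step is an announcement of the difficulty rather than its resolution; the lower bound in \eqref{U:std:est} is not decorative --- it is what makes the linearized Neumann problem solvable.
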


The main result concerning asymptotic behaviour of global solutions of (\ref{systPE}) is stated in the following theorem, which we will prove in Section~\ref{sec:asymptotics}.

\begin{theorem}\label{asymptotics}
Assume \eqref{assumptions} and let $(U,V)$ be a positive stationary solution of (\ref{systPE}). 
There exists a  constant $v^\star_s>0$ such that for every $\overline{v}\in(0,v^\star_s)$ and every global classical solution $(u,v)$ of \eqref{systPE}, 
$$
\norm{U-u(t)}_{L^2(\Omega)}\rightarrow 0,\text{ and }\norm{\nabla(V-v(t))}_{L^2(\Omega)}\rightarrow 0
$$
as time $t$ tends to infinity. The rate of convergence is exponential.
\end{theorem}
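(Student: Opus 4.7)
The plan is to combine a Lyapunov functional for $u-U$ with a quantitative elliptic estimate for $v-V$, exploiting that for small $\vbar$ the steady state $(U,V)$ sits near $(\lambda/\mu,0)$, making the chemotactic contribution quantitatively small and the logistic reaction around $U$ strongly dissipative.

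\textbf{Step 1.} First, subtracting the stationary equation for $V$ from the equation for $v$ yields
\[
-\Delta(v-V)+u(v-V) = -(u-U)V \text{ in }\Omega,\qquad v-V=0\text{ on }\partial\Omega.
\]
Testing with $v-V\in H_0^1(\Omega)$ and using $V\le\vbar$ together with Poincaré's inequality, one obtains the key bound $\normA[L^2(\Omega)]{\nabla(v-V)}\le C\vbar\,\normA[L^2(\Omega)]{u-U}$. An analogous scaling argument applied to $V/\vbar$ gives $\normA[C^{1,\alpha}(\Ombar)]{V}\le C\vbar$; in particular $|\partial_\nu V|\le C\vbar$ on $\partial\Omega$.

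\textbf{Step 2.} With $\mathcal{F}(t)=\tfrac12\io(u-U)^2\dx$, testing the $u$-equation against $u-U$, subtracting the corresponding identity for the stationary equation for $U$, and using the flux boundary condition to cancel the first-order boundary terms, one arrives (after writing $u\nabla v - U\nabla V = u\nabla(v-V)+(u-U)\nabla V$) at
\[
\mathcal{F}'(t) = -\io|\nabla(u-U)|^2\dx + \io\nabla(u-U)\cdot[u\nabla(v-V)+(u-U)\nabla V]\dx + \io(u-U)^2[\lambda-\mu(u+U)]\dx.
\]
The cross term involving $\nabla(v-V)$ is bounded by $\tfrac14\|\nabla(u-U)\|_{L^2}^2 + C\vbar^2\|u-U\|_{L^2}^2$ via Step 1 and the $L^\infty$-bound on $u$ from Theorem~\ref{Glo:ex:thm} (which is independent of $\vbar$, per the remark after \eqref{assumptions}). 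The $\nabla V$-piece is integrated by parts: since $-\Delta V = -UV$ gives $\Delta V\ge 0$, the bulk part $-\tfrac12\io(u-U)^2\Delta V\dx$ carries a favourable sign, while the boundary part $\tfrac12\intdom(u-U)^2\partial_\nu V$ is controlled by a trace inequality combined with $|\partial_\nu V|\le C\vbar$. Finally, via \eqref{U:std:est} the pointwise inequality $\lambda-\mu(u+U)\le\lambda-\mu U\le\lambda(1-e^{-\vbar})\le C\vbar$ yields $\io(u-U)^2[\lambda-\mu(u+U)]\dx\le C\vbar\,\|u-U\|_{L^2}^2$. Collecting all estimates, for $\vbar$ small one obtains
\[
\mathcal{F}'(t)\le -\tfrac12\|\nabla(u-U)\|_{L^2}^2 + C\vbar\,\|u-U\|_{L^2}^2.
\]

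\textbf{Step 3.} To convert this into exponential decay, a Poincaré-Wirtinger inequality is used in the form $\|u-U\|_{L^2}^2\le C_{PW}\|\nabla(u-U)\|_{L^2}^2 + |\Omega|\,\overline{(u-U)}(t)^2$, where the mean $\overline{(u-U)}(t)$ satisfies the mass balance
\[
|\Omega|\frac{d}{dt}\overline{(u-U)} = \io(u-U)(\lambda-2\mu U)\dx - \mu\io(u-U)^2\dx.
\]
For $\vbar<\ln 2$, the pointwise bound $\lambda-2\mu U\le\lambda(1-2e^{-\vbar})<0$ from \eqref{U:std:est} makes the first contribution strongly dissipative for the mean, and a coupled system of differential inequalities for $\|u-U\|_{L^2}^2$ and $\overline{(u-U)}^2$ then yields exponential decay of both via Grönwall. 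The corresponding decay of $\|\nabla(v-V)\|_{L^2}$ follows at once from Step 1. The main obstacle is the absence of a direct Poincaré inequality for $u-U$ (no prescribed boundary data), which forces the mean to be tracked separately through the mass balance and makes essential use of the quantitative smallness of $\lambda-2\mu U$; a further technical point, absent in the pure Neumann setting considered e.g.\ in \cite{FLM}, is the boundary integral $\tfrac12\intdom(u-U)^2\partial_\nu V$ arising from the chemotaxis term, which cannot be dropped and is manageable only thanks to the smallness $|\partial_\nu V|\le C\vbar$ from Step 1.
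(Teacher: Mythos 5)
Your Steps 1 and 2 are essentially sound (modulo the side remark that the $L^\infty$-bound of Theorem~\ref{Glo:ex:thm} is stated for fixed $\vbar$, not uniformly in $\vbar$; uniformity for $\vbar\in(0,\vhat)$ does hold via Lemma~\ref{u:Lp:unif}, so this is repairable). The genuine gap is in Step 3. By estimating $\lambda-\mu(u+U)\le\lambda-\mu U\le C\vbar$ you throw away the term $-\mu u$, which is the only source of zero-order dissipation available here, and you are left with $\mathcal F'\le-\tfrac12\|\nabla(u-U)\|_{L^2}^2+C\vbar\|u-U\|_{L^2}^2$, which by itself gives nothing. The proposed rescue via Poincar\'e--Wirtinger and the mass balance is not carried out, and its natural implementation fails: writing $m=\overline{(u-U)}$, the identity $|\Omega|m'=\io(u-U)(\lambda-2\mu U)-\mu\io(u-U)^2$ produces in $\frac{d}{dt}m^2$ the sign-indefinite term $-\tfrac{2\mu m}{|\Omega|}\io(u-U)^2$. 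Its coefficient is of size $\mu\|u-U\|_{L^\infty}\gtrsim\mu\|U\|_{L^\infty}\approx\lambda$ and is \emph{not} small in $\vbar$; when you try to close the coupled system $E=\|u-U\|_{L^2}^2+Km^2$, absorbing $\tfrac{|\Omega|}{C_{PW}}m^2$ forces $K\gtrsim\tfrac{|\Omega|}{C_{PW}\lambda}$, while absorbing $K\cdot\tfrac{2\mu}{|\Omega|}\|u-U\|_{L^\infty}\|u-U\|_{L^2}^2$ into $-\tfrac1{C_{PW}}\|u-U\|_{L^2}^2$ forces $K\lesssim\tfrac{|\Omega|}{C_{PW}\mu\|u-U\|_{L^\infty}}$; these are incompatible since $\mu\|U\|_{L^\infty}\ge\lambda e^{-\vbar}$. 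Also the phrase ``strongly dissipative for the mean'' glosses over the fact that $\io(u-U)(\lambda-2\mu U)$ is not a negative multiple of $m$ because $u-U$ changes sign; one must split off the fluctuation $\io(\lambda-2\mu U-\overline{\lambda-2\mu U})(u-U-m)$, which again costs gradient terms.

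The fix — and the route the paper takes — is to keep the full factor $\lambda-\mu(u+U)$ and establish a uniform positive lower bound for $u$ itself. Comparison with the logistic ODE $y'=\lambda y-(\mu+\vbar)y^2$, $y(0)=\inf u_0$ (Lemma~\ref{u:lower:sol}, using Hopf's lemma at the boundary and $v<\vbar$ in the interior) gives $u(t)\ge\min\{\inf u_0,\tfrac{\lambda}{\mu+\vbar}\}=:\varepsilon_1>0$ uniformly in $t$. Together with $U\ge e^{-\vbar}\tfrac{\lambda}{\mu}$ from \eqref{U:std:est} this yields $\lambda-\mu(u+U)\le-\mu\varepsilon_1+\lambda(1-e^{-\vbar})\le-\tfrac{\mu\varepsilon_1}{2}$ for small $\vbar$, so your Step 2 inequality improves to $\mathcal F'\le-(\mu\varepsilon_1-C\vbar)\|u-U\|_{L^2}^2$, and exponential decay follows directly, with no Poincar\'e--Wirtinger inequality and no tracking of the mean. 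I recommend you prove the lower bound for $u$ as a separate comparison lemma and then rerun your Step 2 without discarding $-\mu u$.
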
 

\section{Global existence and boundedness}\label{sec:boundedness}
In this section we show the global existence and global boundedness of solution of (\ref{systPE}) if $\Omega\subset\mathbb{R}^2$. To do that we need to find some {\it a priori} estimates of solutions of this problem. Of course, the main difficulty is to find appropriate estimates on $u$. We start with showing that $u$ has globally bounded $L^1$ norm and locally integrable $L^2$ norm.  
Then, applying the  approach taken in \cite{lankeit_smoothing}, we show that in fact the solutions are uniformly bounded in time.    

\begin{lemma}
Assume \eqref{assumptions} and $\vbar>0$. 
Let $(u,v)$ be a classical solution of (\ref{systPE})  defined on $[0,T_{\max})$, then 
\begin{equation}\label{L1_est}
\norm{u(t)}_{L^1(\Omega)}\leq\max\{\norm{u_0}_{L^1(\Omega)},\frac{\lambda|\Omega|}{\mu}\}=:M,\quad \text{ for all } t\in[0,T_{\max} ).
\end{equation}
Moreover, for every $t\in[0,\Tmax)$, 
\begin{equation}\label{uL2:eq}
\norm{u(t)}_{L^2(\Omega)}^2=\frac{1}{\mu}\left(\lambda\norm{u(t)}_{L^1(\Omega)}-\frac{d}{dt}\norm{u(t)}_{L^1(\Omega)}\right).
\end{equation}

\end{lemma}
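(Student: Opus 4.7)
The plan is to derive both statements from a single computation, namely integration of the first PDE in \eqref{systPE} over $\Omega$. Writing $y(t) := \io u(t)$, I would apply the divergence theorem to $\io \nabla\cdot(\nabla u - u\nabla v)$ and observe that the boundary integral $\intdom (\partial_\nu u - u\partial_\nu v)$ vanishes by the no-flux condition in \eqref{systPE}. This gives at once
\begin{equation*}
y'(t) \;=\; \lambda\, y(t) - \mu \norm{u(t)}_{L^2(\Omega)}^2\qquad \text{for all }t\in[0,\Tmax),
\end{equation*}
which, upon rearrangement, is exactly the identity \eqref{uL2:eq}.

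For the $L^1$-bound, I would estimate the $L^2$-norm from below using the Cauchy--Schwarz (or Jensen) inequality in the form $\io u^2 \ge \frac{1}{|\Omega|}\bigl(\io u\bigr)^2 = \frac{y(t)^2}{|\Omega|}$. Substituting into the ODE above yields the logistic differential inequality
\begin{equation*}
y'(t) \;\le\; \lambda\, y(t) - \frac{\mu}{|\Omega|}\, y(t)^2 \;=\; \frac{\mu}{|\Omega|}\, y(t)\!\left(\frac{\lambda|\Omega|}{\mu}-y(t)\right).
\end{equation*}
From here a direct comparison argument finishes the proof: if $y(t_0)\le \frac{\lambda|\Omega|}{\mu}$ for some $t_0\ge 0$, the right-hand side is non-negative at $t_0$ only when $y(t_0)$ lies below the equilibrium $\frac{\lambda|\Omega|}{\mu}$, so $y$ cannot cross this barrier from below; conversely, whenever $y(t)>\frac{\lambda|\Omega|}{\mu}$, the right-hand side is strictly negative, so $y$ is decreasing. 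Consequently $y(t)\le\max\bigl\{y(0),\frac{\lambda|\Omega|}{\mu}\bigr\}=M$ throughout $[0,\Tmax)$, which is \eqref{L1_est}.

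I do not anticipate any genuine obstacle; both assertions are essentially immediate consequences of the mass equation together with Jensen's inequality. The only point requiring mild care is verifying that the classical regularity provided by Theorem~\ref{locEx} justifies the integration by parts (which it does, since $u\in C^{2,1}(\overline\Omega\times(0,\Tmax))$ and the boundary is $C^{2,\sigma}$) and that the comparison is valid up to $t=0$, using continuity of $y$ at $t=0$ via $u\in C([0,\Tmax);W^{1,p_0}(\Omega))\hookrightarrow C([0,\Tmax);L^1(\Omega))$.
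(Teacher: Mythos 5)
Your proposal is correct and follows essentially the same route as the paper: integrate the $u$-equation, drop the boundary term via the no-flux condition to obtain $\frac{d}{dt}\norm{u(t)}_{L^1(\Omega)}=\lambda\norm{u(t)}_{L^1(\Omega)}-\mu\io u^2$ (using $u>0$ from Theorem~\ref{locEx} to identify $\io u$ with the $L^1$-norm), rearrange for \eqref{uL2:eq}, and apply Cauchy--Schwarz plus an ODE comparison for \eqref{L1_est}. No gaps.
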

\begin{proof}
Integration of the $u$-equation leads us to
\begin{align}
\frac{d}{dt}\norm{u(t)}_{L^1(\Omega)}&=\lambda\norm{u(t)}_{L^1(\Omega)}-\mu\int_\Omega u^2(x,t)dx\label{integrateduequation}\\
&\leq \lambda\norm{u(t)}_{L^1(\Omega)}-\frac{\mu}{|\Omega|}\norm{u(t)}_{L^1(\Omega)}^2\nonumber\\
&=\frac{\mu}{|\Omega|}\norm{u(t)}_{L^1(\Omega)}\left(\frac{\lambda|\Omega|}{\mu}-\norm{u(t)}_{L^1(\Omega)}\right),\nonumber
\end{align} 
and this ODE inequality implies (\ref{L1_est}). Integration of \eqref{integrateduequation} then directly shows \eqref{uL2:eq}.
\end{proof}

The following lemma establishes estimates for the second solution component. Here some influence of the size of the boundary data becomes visible.
\begin{lemma}\label{lem:regularityestimatev}
 Assume \eqref{ass:dimension} and \eqref{ass:domain}. Let $p\in[1,\infty)$. Then for every $q\in[1,\frac{2p}{p-2}]$ (if $p<2$) and for every $q\in[1,\infty]$ (if $p>2$) there is $c_1>0$ such that for every $\vbar\in(0,\infty)$ and every nonnegative $u\in C^0(\Ombar)$, every classical solution $v$ of 
 \begin{equation}\label{veq}
  -\Delta v = -uv, \qquad v\big|_{\partial\Omega} = \vbar 
 \end{equation}
satisfies 
\[
 \normA[\Lom q]{\nabla v}\le c_1\vbar \normA[\Lom p]{u}.
\]
Moreover,  there is $c_2>0$ such that for every $\vbar\in(0,\infty)$ and every nonnegative $u\in C^0(\Ombar)$, every classical solution $v$ of \eqref{veq} fulfils 
\begin{equation}\label{GN:ineq}
 \normA[\Lom4]{\nabla v}^4\le c_2\vbar^{4}\normA[\Lom1]{u}\normA[\Lom2]{u}^2.
\end{equation}
\end{lemma}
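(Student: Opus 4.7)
The maximum principle (already invoked in Theorem~\ref{locEx}) gives $0\le v\le\vbar$ on $\Ombar$. I would introduce the homogenisation $w:=\vbar-v$, which belongs to $H^1_0(\Omega)$ and solves
\[
 -\Delta w = uv\quad\text{in }\Omega,\qquad w\big|_{\partial\Omega}=0,\qquad 0\le uv\le \vbar u.
\]
Since $\nabla v=-\nabla w$, all estimates on $\nabla v$ reduce to estimates on $\nabla w$. For the first inequality, I would apply Calderón--Zygmund $L^p$ elliptic regularity for the Dirichlet Laplacian on the $C^{2,\sigma}$ domain $\Omega$: there is $C>0$ such that $\normA[W^{2,p}(\Omega)]{w}\le C\normA[\Lom p]{uv}\le C\vbar\normA[\Lom p]{u}$. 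The claimed bound then follows from the two-dimensional Sobolev embedding $W^{2,p}(\Omega)\hookrightarrow W^{1,q}(\Omega)$, which is continuous for $q\in[1,\frac{2p}{2-p}]$ when $p<2$ and for $q\in[1,\infty]$ when $p>2$ (in the latter case $W^{2,p}\hookrightarrow C^1(\Ombar)$).

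For \eqref{GN:ineq} I would first produce two separate bounds on $\nabla v$ in $L^2$ and on $D^2 v$ in $L^2$. Testing $-\Delta w=uv$ against $w$ and using $|w|\le\vbar$, $0\le v\le\vbar$ yields
\[
 \io|\nabla v|^2 = \io|\nabla w|^2 = \io uvw \le \vbar^2\normA[\Lom1]{u}.
\]
Elliptic $L^2$-regularity for $w\in H^1_0$ gives $\normA[\Lom2]{D^2 v}=\normA[\Lom2]{D^2 w}\le C\normA[\Lom2]{uv}\le C\vbar\normA[\Lom2]{u}$.

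The non-obvious ingredient is interpolating $\normA[\Lom4]{\nabla v}$ between $\normA[\Lom2]{\nabla v}$ and $\normA[\Lom2]{D^2v}$ \emph{without} losing a lower-order $\normA[\Lom2]{\nabla v}^2$ term that would spoil the factorisation. The key remark is that each component of $\nabla v$ has vanishing integral: since $v\big|_{\partial\Omega}=\vbar$ is constant, the divergence theorem gives $\io\partial_i v = \vbar\intdom\nu_i=0$ for $i=1,2$. Consequently, the Poincaré--Wirtinger inequality applied to $\partial_i v$ yields $\normA[\Lom2]{\nabla v}\le C\normA[\Lom2]{D^2 v}$, and therefore the two-dimensional Gagliardo--Nirenberg inequality $\normA[\Lom4]{f}^2\le C\normA[\Lom2]{f}\bigl(\normA[\Lom2]{f}+\normA[\Lom2]{\nabla f}\bigr)$ applied to $f=\nabla v$ simplifies to
\[
 \normA[\Lom4]{\nabla v}^2\le C\normA[\Lom2]{\nabla v}\normA[\Lom2]{D^2 v}.
\]
Squaring and inserting the two bounds gives
\[
 \normA[\Lom4]{\nabla v}^4 \le C\normA[\Lom2]{\nabla v}^2\normA[\Lom2]{D^2 v}^2\le C\vbar^2\normA[\Lom1]{u}\cdot\vbar^2\normA[\Lom2]{u}^2,
\]
which is \eqref{GN:ineq}.

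The main obstacle, in my view, is precisely the mean-zero observation above: a direct application of the usual Gagliardo--Nirenberg bound on $H^1(\Omega)$ to $\nabla v$ produces an additional $\normA[\Lom1]{u}^2$ contribution that cannot be absorbed into $\normA[\Lom1]{u}\normA[\Lom2]{u}^2$ (small, spread-out $u$ would violate $\normA[\Lom1]{u}\le C\normA[\Lom2]{u}^2$), and so the constancy of $v$ on $\partial\Omega$, encoded through $\io\partial_i v=0$, is what makes the estimate \eqref{GN:ineq} with the stated product structure work.
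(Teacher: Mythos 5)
Your argument is correct, and for the first assertion and the two basic estimates ($\|\nabla v\|_{L^2(\Omega)}^2\le\vbar^2\|u\|_{L^1(\Omega)}$ from testing with $w$, and $\|w\|_{W^{2,2}(\Omega)}\le C\vbar\|u\|_{L^2(\Omega)}$ from elliptic regularity) it coincides with the paper's proof; your correction of the Sobolev range to $q\le\frac{2p}{2-p}$ for $p<2$ also matches the evident intent of the statement. The only divergence is in the interpolation step, and there your diagnosis of the ``main obstacle'' is overstated. The lower-order term in the Gagliardo--Nirenberg inequality causes no trouble, because the elliptic estimate $\|w\|_{W^{2,2}(\Omega)}\le C\vbar\|u\|_{L^2(\Omega)}$ already controls the \emph{full} norm $\|\nabla w\|_{W^{1,2}(\Omega)}$, not merely $\|D^2w\|_{L^2(\Omega)}$; the paper therefore simply writes
$\|\nabla w\|_{L^4(\Omega)}^4\le C\,\|\nabla w\|_{W^{1,2}(\Omega)}^2\,\|\nabla w\|_{L^2(\Omega)}^2\le C\vbar^2\|u\|_{L^2(\Omega)}^2\cdot\vbar^2\|u\|_{L^1(\Omega)}$,
using the energy identity only for the second factor and elliptic regularity for the first. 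The spurious $\|u\|_{L^1(\Omega)}^2$ contribution you describe arises only if one insists on estimating the $L^2$-part of $\|\nabla w\|_{W^{1,2}(\Omega)}$ by the energy identity as well. Your alternative route --- observing $\io\partial_i v=\vbar\intdom\nu_i=0$ and invoking Poincar\'e--Wirtinger to get $\|\nabla v\|_{L^2(\Omega)}\le C\|D^2v\|_{L^2(\Omega)}$ --- is a valid and somewhat elegant workaround, but it is not needed for the stated product structure of \eqref{GN:ineq}.
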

\begin{proof}
We substitute $z:=\vbar -v$ to rewrite \eqref{veq} into the form 
\begin{equation}\label{zeq}
-\Delta z=(\overline{v}-z)u, \quad z|_{\partial\Omega}=0.
\end{equation}
Using that $\nabla v=\nabla z$ and $0\le v\le \vbar$, along with the embedding $W^{2,p}(\Omega)\hookrightarrow W^{1,q}(\Omega)$ and elliptic regularity, and with a constant $c_1>0$ arising from these, we have 
\[
 \normA[\Lom q]{\nabla v}=\normA[\Lom q]{\nabla z} \le c_1 \normA[\Lom p]{(\vbar-z) u} \le c_1\vbar \normA[\Lom p]{u}.
\]
If we multiply \eqref{zeq} by $z$ and integrate over $\Omega$, then we obtain
\begin{equation}\label{grvest}
\norm{\nabla z(t)}_{L^2(\Omega)}^2=\int_\Omega uz(\overline{v}-z)\leq \overline{v}^2\norm{u(t)}_{L^1(\Omega)}\quad \text{ for all }t\in[0,T_{\max}).  
\end{equation}
Again by elliptic regularity we know that
\begin{equation}\label{ellreg1}
\norm{z(t)}_{W^{2,2}(\Omega)}\leq c_3\norm{u(\overline{v}-z)(t)}_{L^2(\Omega)}\leq c_3 \overline{v}\norm{u(t)}_{L^2(\Omega)},
\end{equation}
for some constant $c_3>0$ depending on $\Omega$ only. Now, with $c_4>0$ taken from the Gagliardo-Nirenberg interpolation inequality, by 
(\ref{grvest}), (\ref{ellreg1}) we have
\begin{align*}
\norm{\nabla z(t)}_{L^4(\Omega)}^4&\leq\left( c_4\norm{\nabla z(t)}_{W^{1,2}(\Omega)}^{\frac{1}{2}}\norm{\nabla z(t)}_{L^2(\Omega)}^{\frac{1}{2}}\right)^4\nonumber\\
&\leq c_4^4\norm{z(t)}_{W^{2,2}(\Omega)}^2\overline{v}^2\normA[\Lom1]{u}\\
&\leq c_2\vbar^4 \norm{u(t)}_{L^2(\Omega)}^2\normA[\Lom1]{u},
\end{align*}
where $c_2=c_3^2c_4^4$.
\end{proof}

Now we shall show that in fact the solution is  uniformly bounded in time. We start with the following simple observation, which will later allow us to find certain times $\hat{t}$ where, e.g., $\io u^2(\hat{t})$ is small.

\begin{lemma}\label{lem:firstbound}
Assume \eqref{assumptions}. 
There is $c_5>0$, independent of $t$, such that for all $\vbar>0$, for the solution $(u,v)$ of \eqref{systPE} and all $t\ge 0$, $τ\in(0,1]$ such that $t+τ\le \Tmax$ we have 
\begin{equation}\label{bd:ul2l2}
\int_t^{t+τ} \int_\Omega u^2 \leq c_5
\end{equation}
and 
\begin{equation}\label{bd:navl4l4}
\int_t^{t+τ}\int_\Omega |\nabla v|^4\leq c_5\vbar^4
\end{equation} 
\end{lemma}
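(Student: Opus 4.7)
The plan is to read off both bounds by integrating identity \eqref{uL2:eq} in time and feeding the result into \eqref{GN:ineq}.

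For \eqref{bd:ul2l2}, I would integrate \eqref{uL2:eq} over $[t,t+\tau]$. Since the second term on the right of \eqref{uL2:eq} is a total derivative of $\|u\|_{L^1(\Omega)}$, it telescopes, so
\[
 \mu\int_t^{t+\tau}\io u^2 \;=\; \lambda \int_t^{t+\tau}\normA[\Lom1]{u(s)}\ds \;+\; \normA[\Lom1]{u(t)}-\normA[\Lom1]{u(t+\tau)}.
\]
The first term on the right is at most $\lambda M\tau\le \lambda M$ by \eqref{L1_est}, and the second is bounded by $\normA[\Lom1]{u(t)}\le M$ (dropping the nonnegative term $\normA[\Lom1]{u(t+\tau)}\ge 0$). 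Hence the left-hand side is bounded by $(\lambda+1)M$, uniformly in $t$ and in $\tau\in(0,1]$. This is the key conceptual point: the telescopic structure gives a bound that does not grow with $\tau$.

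For \eqref{bd:navl4l4}, I would plug \eqref{GN:ineq} from Lemma~\ref{lem:regularityestimatev} into a time integral, extract the supremum of $\normA[\Lom1]{u(s)}$ using \eqref{L1_est}, and invoke \eqref{bd:ul2l2}:
\[
 \int_t^{t+\tau}\io|\nabla v|^4 \;\le\; c_2\vbar^{4}\int_t^{t+\tau}\normA[\Lom1]{u(s)}\normA[\Lom2]{u(s)}^2\ds \;\le\; c_2\vbar^{4} M\cdot\frac{(\lambda+1)M}{\mu}.
\]
Choosing $c_5$ as the larger of the two constants just obtained completes the argument.

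There is no real obstacle here: the whole point is that both estimates are immediate once one has already proved \eqref{L1_est}, \eqref{uL2:eq} and \eqref{GN:ineq}. The only subtlety worth flagging is that $c_5$ is indeed independent of $\vbar$, because the $\vbar$-dependence has been factored out in \eqref{GN:ineq} and the mass identities \eqref{L1_est}, \eqref{uL2:eq} do not involve $\vbar$ at all.
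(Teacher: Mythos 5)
Your proposal is correct and follows exactly the paper's own argument: integrate \eqref{uL2:eq} in time, use the telescoping of the $L^1$-norm together with \eqref{L1_est} to get the uniform bound $\frac{M(\lambda+1)}{\mu}$, and then feed this into \eqref{GN:ineq} for the gradient estimate. The constants you obtain coincide with those in the paper (which takes $c_5=\max\{c_2M,1\}\frac{M(\lambda+1)}{\mu}$), so there is nothing to add.
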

\begin{proof}
Integration, on $(t,t+τ),$ of \eqref{uL2:eq} and \eqref{GN:ineq}  yields
\begin{align*}
\int_t^{t+τ}\normA[L^2(\Omega)]{u(s)}^{2}ds&\leq \frac{1}{\mu}\left(\int_t^{t+τ}\lambda\normA[L^1(\Omega)]{u(s)}ds+\normA[L^1(\Omega)]{u(t)}-\normA[L^1(\Omega)]{u(t+\tau)}\right)\\
&\leq \frac{M(\lambda+1)}{\mu},
\end{align*} 
and 
\begin{align*}
\int_t^{t+τ}\normA[L^4(\Omega)]{\nabla v(s)}^4ds\leq c_{2} \vbar^4M\frac{M(\lambda+1)}{\mu}. 
\end{align*}
Thus it suffices to take $c_5:=\max\{c_2M,1\}\frac{M(\lambda+1)}{\mu}$. 
\end{proof}

For $T\in(0,\infty]$, in the space
$$
V=L^2_{loc}([0,T);W^{1,2}(\Omega))\cap C([0,T);L^2(\Omega))
$$
we define the seminorms
$$
\norm{\varphi}_{V,t_1,t_2}=\sup_{(t_1,t_2)}\norm{\varphi(t)}_{L^2(\Omega)}+\norm{\nabla\varphi}_{L^2(\Omega\times (t_1,t_2))}, \qquad T\ge t_2>t_1\geq 0,\;\; t_2<\infty.
$$
Using the Gagliardo-Nirenberg inequality we can show the following estimate. 
\begin{lemma}\label{lem:normV}
 There is $c_6>0$ such that for any $T\in(0,\infty]$, $t_1,t_2\in [0,T)$ with $t_2>t_1$ we have
 $$
  \norm{\varphi}_{L^4(\Omega\times(t_1,t_2))}^4\leq c_6(1+(t_2-t_1))\norm{\varphi}_{V,t_1,t_2}^4\quad\text{for all }\varphi\in V.
 $$
\end{lemma}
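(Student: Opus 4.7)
The plan is to reduce this to the two-dimensional Gagliardo--Nirenberg inequality applied pointwise in time, then integrate in $t$ and balance the $L^\infty_t L^2_x$ and $L^2_t W^{1,2}_x$ parts of the $V$-seminorm.

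The first step is to recall that in two space dimensions there is $C>0$ such that for every $w\in W^{1,2}(\Omega)$,
\[
\normA[\Lom 4]{w}^{4}\le C\,\normA[W^{1,2}(\Omega)]{w}^{2}\,\normA[\Lom 2]{w}^{2},
\]
which is the standard $2$D Gagliardo--Nirenberg bound (note we use $W^{1,2}$, not $\dot W^{1,2}$, so no mean-value assumption is needed, and \eqref{ass:domain} gives the necessary extension property of $\Omega$). Applying this with $w=\vf(\cdot,t)$ for a.e.\ $t\in(t_1,t_2)$ and integrating in time gives
\[
\normA[L^4(\Omega\times (t_1,t_2))]{\vf}^{4}\le C\sup_{t\in(t_1,t_2)}\normA[\Lom 2]{\vf(t)}^{2}\cdot\int_{t_1}^{t_2}\bigl(\normA[\Lom 2]{\vf(t)}^{2}+\normA[\Lom 2]{\nabla\vf(t)}^{2}\bigr)\,dt.
\]

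The second step is to bound the time integral by the seminorm. Setting $A:=\sup_{(t_1,t_2)}\normA[\Lom 2]{\vf(t)}$ and $B:=\normA[L^2(\Omega\times(t_1,t_2))]{\nabla\vf}$, we have $\Vnorm{\vf}=A+B$ and
\[
\int_{t_1}^{t_2}\normA[\Lom 2]{\vf(t)}^{2}\,dt\le(t_2-t_1)A^{2},\qquad \int_{t_1}^{t_2}\normA[\Lom 2]{\nabla\vf(t)}^{2}\,dt=B^{2},
\]
so that
\[
\normA[L^4(\Omega\times(t_1,t_2))]{\vf}^{4}\le C\,A^{2}\bigl((t_2-t_1)A^{2}+B^{2}\bigr)\le C\bigl(1+(t_2-t_1)\bigr)\bigl(A^{4}+A^{2}B^{2}\bigr).
\]
Using $A^{4}+A^{2}B^{2}\le (A+B)^{4}$ one obtains the asserted inequality with $c_{6}:=C$.

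The argument is essentially routine; the only minor wrinkle is that the $V$-seminorm contains $\normA[\Lom 2]{\nabla\vf}$ in space-time but not $\normA[\Lom 2]{\vf}$ in space-time, which is why the factor $1+(t_2-t_1)$ (and not simply $1$) appears: we have to trade the missing lower-order piece against the supremum in time, and this is precisely what yields the linear-in-$(t_2-t_1)$ term. No regularity issue arises because membership in $V$ makes all quantities above well defined for $t_2<\infty$; the case $t_2=T=\infty$ is excluded by the hypothesis.
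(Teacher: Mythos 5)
Your proof is correct, and it is the standard argument behind this embedding: the paper itself does not write out a proof but simply cites \cite[Lemma 4.1]{lankeit_smoothing} (and \cite{LSU}), where the same reduction to the two-dimensional Gagliardo--Nirenberg/Ladyzhenskaya inequality, integrated in time and balanced against the $\sup_t L^2$ part of the $V$-seminorm, is carried out. Your accounting of where the factor $1+(t_2-t_1)$ comes from is also accurate.
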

\begin{proof}
 See \cite[Lemma 4.1]{lankeit_smoothing} (or \cite{LSU}).
\end{proof}

\begin{lemma}\label{lem:estimateuVst}
Assume \eqref{assumptions}.
 There is $c_7>0$ such that for every $\vbar>0$, every solution $(u,v)$ of \eqref{systPE} and any $s\ge 0$, $τ\in(0,1]$ and $t\in[s,s+3τ)\subseteq[0,\Tmax)$, 
 $$
  \Vnorm[s,t]{u}^2 \leq c_7+c_7\Vnorm[s,t]{u}^2\left(\int_s^t\int_\Omega |\nabla v|^4\right)^{\frac{1}{2}} + 4 \int_\Omega u^2(\cdot,s)
 $$
\end{lemma}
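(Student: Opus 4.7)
The plan is to test the first equation of \eqref{systPE} by $u$ in $L^2(\Omega)$ and assemble the standard energy inequality, then control the resulting chemotactic term using the $V$-norm together with Lemmas~\ref{lem:normV} and \ref{lem:firstbound}. The no-flux condition $\partial_\nu u - u\partial_\nu v=0$ on $\partial\Omega$ makes the integration-by-parts boundary-free, so after applying Young's inequality to the cross term one obtains
\[
 \tfrac{d}{dt}\io u^2 + \io |\nabla u|^2 \leq \io u^2 |\nabla v|^2 + 2\lambda \io u^2
\]
(the dissipative $-2\mu\io u^3$ term just being discarded).

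Next I would integrate this differential inequality from $s$ to $\tau$ for arbitrary $\tau\in(s,t)$, giving
\[
 \io u^2(\cdot,\tau) - \io u^2(\cdot,s) + \int_s^\tau\!\io|\nabla u|^2 \leq \int_s^t\!\io u^2|\nabla v|^2 + 2\lambda\int_s^t\!\io u^2.
\]
Taking the supremum over $\tau\in(s,t)$ and separately $\tau=t$, both $\sup_{(s,t)}\io u^2(\cdot,\tau)$ and $\int_s^t\io|\nabla u|^2$ are bounded by the same right-hand side. Because $\|u\|_{V,s,t}^2=(N+G)^2\le 2N^2+2G^2$ with $N=\sup_{(s,t)}\normA[\Lom2]{u}$ and $G=\normA[L^2(\Omega\times(s,t))]{\nabla u}$, this yields
\[
 \Vnorm[s,t]{u}^2 \leq 4\io u^2(\cdot,s) + 4\int_s^t\!\io u^2|\nabla v|^2 + 8\lambda\int_s^t\!\io u^2.
\]

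It remains to estimate the last two integrals. For the taxis term, Cauchy--Schwarz in space-time combined with Lemma~\ref{lem:normV} (applied with $\varphi=u$, noting that $t-s<3\tau\le 3$, hence $1+(t-s)\le 4$) gives
\[
 \int_s^t\!\io u^2|\nabla v|^2 \leq \Bigl(\int_s^t\!\io u^4\Bigr)^{\!1/2}\Bigl(\int_s^t\!\io |\nabla v|^4\Bigr)^{\!1/2} \leq 2\sqrt{c_6}\,\Vnorm[s,t]{u}^2\Bigl(\int_s^t\!\io|\nabla v|^4\Bigr)^{\!1/2}.
\]
For the last term, splitting $(s,s+3\tau)$ into three subintervals of length at most $\tau\le 1$ and applying \eqref{bd:ul2l2} from Lemma~\ref{lem:firstbound} on each piece yields $\int_s^t\io u^2\le 3c_5$. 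Combining everything with $c_7:=\max\{8\sqrt{c_6},24\lambda c_5\}$ (or a suitable larger choice) gives the claimed inequality.

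There is no serious obstacle here; the only points requiring care are the bookkeeping of the constants $2$ versus $\tfrac12$ when passing between the $L^2$ energy identity and the $V$-seminorm, and the use of $t-s\le 3\tau\le 3$ to absorb the length of the time interval into universal constants so that the resulting $c_7$ does not depend on $s$, $t$, $\tau$ or $\vbar$.
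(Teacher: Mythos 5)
Your proof is correct and follows essentially the same route as the paper's: test the $u$-equation with $u$, apply Young's inequality to the cross term, and control the taxis integral by Cauchy--Schwarz together with Lemma~\ref{lem:normV} and the observation $1+(t-s)\le 4$. The only (harmless) deviation is in the logistic term: the paper absorbs $2\lambda\io u^2-2\mu\io u^3\le|\Omega|\sup_{\xi>0}(2\lambda\xi^2-2\mu\xi^3)$ pointwise in time using the cubic absorption, whereas you discard the cubic term and instead bound $\int_s^t\io u^2\le 3c_5$ via \eqref{bd:ul2l2} of Lemma~\ref{lem:firstbound}; both choices yield a constant independent of $\vbar$, $s$, $t$ and $\tau$, as required.
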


\begin{proof}
 For the constant $c_8:=|\Omega|\sup_{ξ>0} (2λξ^2-2μξ^3)>0$, we have
 \begin{align*}
  \frac{d}{dt}\int_\Omega u^2 &= 2\int_\Omega u\nabla\cdot(\nabla u-u\nabla v)+2\lambda\int_\Omega u^2-2\mu\int_\Omega u^3\\
  &\le -2\int_\Omega \nabla u\cdot(\nabla u-u\nabla v) + c_8 \\
  &= -2\int_\Omega |\nabla u|^2 + 2\int_\Omega u\nabla u\nabla v+c_8\\
  &\leq -\int_\Omega |\nabla u|^2 + \int_\Omega u^2|\nabla v|^2+c_8.
 \end{align*}
Integration of this inequality on $(s,t)$ and an application of Lemma~\ref{lem:normV} lead us to
 \begin{align*}
  \int_\Omega u^2(\cdot,t)-\int_\Omega u^2(\cdot,s) +\int_s^t\int_\Omega |\nabla u|^2 &\leq \int_s^t \int_\Omega u^2|\nabla v|^2 + c_8(t-s)\\
  &\leq \left(\int_s^t \int_\Omega u^4\right)^{\frac{1}{2}} \left(\int_s^t \int_\Omega |\nabla v|^4\right)^{\frac{1}{2}} + c_8(t-s)\\
  &\leq \sqrt{c_6(1+(t-s))}\Vnorm[s,t]{u}^2\left(\int_s^t \int_\Omega |\nabla v|^4\right)^{\frac{1}{2}} + c_8(t-s)\\
  &\leq \sqrt{4c_6}\Vnorm[s,t]{u}^2\left(\int_s^t \int_\Omega |\nabla v|^4\right)^{\frac{1}{2}} + 3c_8.
\end{align*}
In particular, 
 \begin{align*}
\int_\Omega u^2(\cdot,t) \leq   \sqrt{4c_6}\Vnorm[s,t]{u}^2 \left(\int_s^t \int_\Omega |\nabla v|^4\right)^{\frac{1}{2}} + 3c_8 + \int_\Omega u^2(\cdot,s)
 \end{align*}
and 
 \begin{align*}
\int_s^t \int_\Omega |\nabla u|^2 \leq   \sqrt{4c_6}\Vnorm[s,t]{u}^2\left(\int_s^t \int_\Omega |\nabla v|^4\right)^{\frac{1}{2}} + 3c_8 + \int_\Omega u^2(\cdot,s),
 \end{align*}
hence 
\begin{align*}
 \Vnorm[s,t]{u}^2&\leq 2\sup_{\sigma\in (s,t)} \int_\Omega u^2(\cdot,\sigma ) + 2\normA[L^2(\Omega\times(s,t))]{\nabla u}^2\\
 &\leq 4\left(\sqrt{4c_6}\Vnorm[s,t]{u}^2\left(\int_s^t \int_\Omega |\nabla v|^4\right)^{\frac{1}{2}} + 3c_8 + \int_\Omega u^2(\cdot,s)\right)\\
 &\leq c_7+c_7\Vnorm[s,t]{u}^2\left(\int_s^t \int_\Omega |\nabla v|^4\right)^{\frac{1}{2}}+4\int_\Omega u^2(\cdot,s)
\end{align*}
if $c_7=\max\{8\sqrt{c_6},12c_8\}$.
\end{proof}

\begin{lemma}\label{lem:locallyuniforml2bound}
Assume \eqref{assumptions}. 
There are $c_{9},c_{10}>0$ such that for every $\vbar>0$, every solution $(u,v)$ of \eqref{systPE} and every $t_1\geq 0$, $τ\in(0,1]$ such that $(t_1,t_1+3τ)\subseteq(0,\Tmax)$ we have 
$$
 \int_\Omega u^2(\cdot,t) \leq c_{9}(1+c_{10}^{\vbar^4})\left(1+\int_\Omega u^2(\cdot,t_1)\right) \qquad \forall t\in(t_1,t_1+3τ).
$$
\end{lemma}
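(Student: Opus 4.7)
The right-hand side of Lemma~\ref{lem:estimateuVst} contains the self-referential term $c_7\Vnorm[s,t]{u}^2 \bigl(\int_s^t\int_\Omega |\nabla v|^4\bigr)^{1/2}$, which can be absorbed into the left only when the coefficient $c_7 \bigl(\int_s^t\int_\Omega |\nabla v|^4\bigr)^{1/2}$ is strictly below $1$. On the full window $(t_1,t_1+3\tau)$, Lemma~\ref{lem:firstbound} (applied to three consecutive subintervals of length $\tau$) only yields $\int_{t_1}^{t_1+3\tau}\int_\Omega |\nabla v|^4 \le 3c_5\vbar^4$, which is too large for absorption once $\vbar$ is moderate. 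My plan is therefore to chop $(t_1,t_1+3\tau)$ into finitely many pieces on each of which the $|\nabla v|^4$-integral falls below a fixed $\vbar$-independent threshold, invoke Lemma~\ref{lem:estimateuVst} piecewise to get a linear bound on $\io u^2$ in terms of its value at the left endpoint of each piece, and then iterate the resulting recursion across the partition.

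Concretely, I would set $\kappa:=(2c_7)^{-2}$, $s_0:=t_1$, and recursively define $s_{j+1}:=\inf\bigl\{t\in(s_j,t_1+3\tau]:\int_{s_j}^t\int_\Omega|\nabla v|^4=\kappa\bigr\}$, with $s_{j+1}:=t_1+3\tau$ when the infimum is vacuous. By the bound above, the procedure terminates after some step $N\le\lceil 3c_5\vbar^4/\kappa\rceil+1\le 12 c_5 c_7^2 \vbar^4+2$, with $s_N=t_1+3\tau$. On each piece, applying Lemma~\ref{lem:estimateuVst} with $s=s_j$ and the original $\tau$ (note $s_{j+1}-s_j\le 3\tau$) gives $c_7\bigl(\int_{s_j}^t\int_\Omega|\nabla v|^4\bigr)^{1/2}\le\tfrac12$ for $t\in(s_j,s_{j+1})$, so the self-referential term absorbs and
\[
\Vnorm[s_j,t]{u}^2 \le 2c_7 + 8\io u^2(\cdot,s_j) \qquad\text{for every } t\in(s_j,s_{j+1}).
\]

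Writing $y_j:=\io u^2(\cdot,s_j)$ and using continuity of $t\mapsto\io u^2(\cdot,t)$ to pass $y_{j+1}$ to the limit $t\to s_{j+1}^-$ of $\Vnorm[s_j,t]{u}^2$, I obtain the recursion $y_{j+1}\le 2c_7+8y_j$, whence by induction $y_j\le 8^j\bigl(y_0+\tfrac{2c_7}{7}\bigr)$. Any $t\in(t_1,t_1+3\tau)$ lies in some $(s_j,s_{j+1})$ with $j\le N-1$, so
\[
\io u^2(\cdot,t)\le \Vnorm[s_j,t]{u}^2 \le 2c_7+8y_j \le 8^N\bigl(y_0+\tfrac{2c_7}{7}+1\bigr),
\]
and substituting the bound on $N$ yields the claim with $c_{10}:=8^{12c_5c_7^2}$ and $c_9$ chosen accordingly. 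The crux of the argument is to arrange for the $\vbar$-dependence to appear only inside the exponent, not as a multiplicative factor of $y_0$; this succeeds because each application of Lemma~\ref{lem:estimateuVst} contributes the fixed, $\vbar$-independent factor $8$, while the number of pieces needed to push the $|\nabla v|^4$-integral below $\kappa$ grows only polynomially in $\vbar$.
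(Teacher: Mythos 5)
Your proposal is correct and follows essentially the same route as the paper: the same greedy partition of $(t_1,t_1+3\tau)$ at the threshold $\tfrac{1}{4c_7^2}$ for $\int\!\!\int|\nabla v|^4$, the same absorption in Lemma~\ref{lem:estimateuVst}, the same recursion $y_{j+1}\le 2c_7+8y_j$, and even the same constant $c_{10}=8^{12c_5c_7^2}$. The only differences are cosmetic bookkeeping of additive constants, which the factor $c_9(1+c_{10}^{\vbar^4})$ absorbs in any case.
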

\begin{proof}
 Given $t_k<t_1+3τ$, $k\in \mathbb{N}$, we choose $t_{k+1}\in(t_k,t_1+3τ)$ such that 
$$
  \int_{t_k}^{t_{k+1}} \int_\Omega |\nabla v|^4 = \frac{1}{4c_7^2} 
$$
 if such $t_{k+1}$ exists, or $t_{k+1}=t_1+3τ$ otherwise. In any case, we thus obtain 
  \begin{equation}\label{est:nav4small}
  \int_{t_k}^{t_{k+1}} \int_\Omega |\nabla v|^4 \leq \frac{1}{4c_7^2}. 
 \end{equation}
 
 Letting $K:= \min\{k\in \mathbb{N}\mid t_k\ge t_1+3τ\}-1$, we have $t_1,\ldots,t_{K}\in[t_1,t_1+3τ)$ and for all $k\in\{2,\ldots,K\}$, we have 
 $$
  \int_{t_1}^{t_k} \int_\Omega |\nabla v|^4=\frac{k}{4c_7^2}.
 $$
 By \eqref{bd:navl4l4} of Lemma~\ref{lem:firstbound}, 
 $$
  3c_5\vbar^4\geq \int_{t_1}^{t_1+3τ}\int_\Omega |\nabla v|^4 \geq \int_{t_1}^{t_K}\int_\Omega |\nabla v|^4\geq \frac{K}{4c_7^2}
 $$
 so that $K\leq 12c_5c_7^2\vbar^4$.
 
 For $k\in\{1,\ldots, K\}$, according to Lemma~\ref{lem:estimateuVst} and \eqref{est:nav4small},
 \begin{align*}
  \Vnorm[t_k,t_{k+1}]{u}^2&\leq c_7+c_7\Vnorm[t_k,t_{k+1}]{u}^2\left(\int_{t_k}^{t_{k+1}}\int_\Omega |\nabla v|^4\right)^\frac{1}{2} + 4\int_\Omega u^2(\cdot,t_k)\\
  &\leq c_7+\frac{1}{2}\Vnorm[t_k,t_{k+1}]{u}^2+4\int_\Omega u^2(\cdot,t_k)
 \end{align*}
 or, in summary, 
 \begin{align}\label{summaryu2}
  \int_\Omega u^2(\cdot,t_{k+1})\leq \Vnorm[t_k,t_{k+1}]{u}^2 &\leq 2c_7+8\int_\Omega u^2(\cdot,t_k).
 \end{align}
Abbreviating $y_k=\int_\Omega u^2(\cdot,t_k)$, we obtain from $y_{k+1}\leq 2c_7+8y_k$ (for $k\in\{1,\ldots,K\}$) that 
$y_k\leq 8^{k-1}y_1 + 2c_7 \sum_{l=0}^{k-1} 8^l\leq 8^{k-1} y_1 + \frac{2c_7}{7} \cdot 8^k$. 

For any $k\in \{1,\ldots,K\}$, we therefore have 
$$
 \int_\Omega u^2(\cdot,t_k) \leq  \max\{\frac18,\frac{2c_7}7\} 8^K (1+y_1) \le c_{11} c_{10}^{\vbar^4} (1+y_1),
$$
where 
$$
c_{10}=8^{12c_5c_7^2},\qquad c_{11}=\max\{\frac18,\frac{2c_7}7\}.
$$
Therefore, by \eqref{summaryu2},
$$
 \Vnorm[t_k,t_{k+1}]{u}^2\leq 2c_7+8c_{11} c_{10}^{\vbar^4} (1+y_1) \qquad \forall k\in\{1,\ldots,K\}. 
$$
In particular, for every $t\in(t_1,t_1+3τ)$ there is $k$ such that $t\in[t_k,t_{k+1})$ (and $\normA[L^2(\Omega)]{u(\cdot,t)}\leq\Vnorm[t_k,t_{k+1}]{u}$), hence 
$$
 \int_\Omega u^2(\cdot,t) \leq 2c_7+8c_{11} c_{10}^{\vbar^4} (1+y_1) \le c_{9}(1+c_{10}^{\vbar^4})(1+y_1),
$$
with $c_{9}=2c_7+8c_{11}$.
\end{proof}

\begin{lemma}\label{lem:uniformboundul2}
Assume \eqref{assumptions} and let $τ\in(0,1]\cap(0,\Tmax)$.
There exist positive constants $c_{12}(τ)$,  $c_{13}(τ)$ such that for every $\vbar>0$, the solution $(u,v)$ of \eqref{systPE} obeys the following inequalities: 
\begin{equation*}
   \sup_{t\in[\tau,\Tmax)}\int_\Omega u^2(\cdot,t)\leq c_{12}(1+c_{10}^{\vbar^4})
\end{equation*}
 and
\begin{equation}\label{grv:unifest}
 \sup_{t\in[\tau,\Tmax)}\normA[L^4(\Omega)]{\nabla v(t)}^4\leq c_{13}\vbar^4(1+c_{10}^{\vbar^4}).
\end{equation}

\end{lemma}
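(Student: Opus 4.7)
The plan is to propagate an $L^2$ bound of $u$ forward in time via Lemma~\ref{lem:locallyuniforml2bound}, seeded not from $t=0$ (where we only have initial data in $W^{1,p_0}$) but from a carefully chosen reference time at which $\int_\Omega u^2$ is already quantitatively controlled. Such reference times are supplied by the space-time bound \eqref{bd:ul2l2}: a mean value argument produces, in every sub-interval of length $\tau$, some instant at which $\int_\Omega u^2 \leq c_5/\tau$, uniformly in $\vbar$. The essential feature that makes the scheme work is that the constants $c_9$ and $c_{10}$ appearing in Lemma~\ref{lem:locallyuniforml2bound} depend only on $\Omega$, $\mu$, $\lambda$ and $u_0$ (through $c_5,c_7$), and not on the window length parameter used there.

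Concretely, fix $t \in [\tau, \Tmax)$. Applying the mean value theorem for integrals to \eqref{bd:ul2l2} on $(t-\tau, t) \subseteq [0, \Tmax)$ produces some $\hat t \in (t-\tau, t)$ with
\begin{equation*}
\int_\Omega u^2(\cdot, \hat t) \leq \frac{c_5}{\tau}.
\end{equation*}
Since $t - \hat t < \tau \leq 1$ and $t < \Tmax$, one can pick an auxiliary parameter $\tau' \in (0,1]$ satisfying $(t - \hat t)/3 < \tau' < (\Tmax - \hat t)/3$ so that simultaneously $t \in (\hat t, \hat t + 3\tau')$ and $(\hat t, \hat t + 3\tau') \subseteq (0, \Tmax)$. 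Invoking Lemma~\ref{lem:locallyuniforml2bound} with this $\tau'$ and reference time $t_1 = \hat t$ then yields
\begin{equation*}
\int_\Omega u^2(\cdot, t) \leq c_9\bigl(1+c_{10}^{\vbar^4}\bigr)\left(1+\frac{c_5}{\tau}\right),
\end{equation*}
from which the first inequality of the lemma follows on setting $c_{12}(\tau) := c_9\bigl(1+c_5/\tau\bigr)$.

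The second inequality drops out immediately by combining the $u$ bound just established with \eqref{GN:ineq} of Lemma~\ref{lem:regularityestimatev} and the $L^1$ bound \eqref{L1_est}:
\begin{equation*}
\normA[L^4(\Omega)]{\nabla v(t)}^4 \leq c_2 \vbar^4 \normA[L^1(\Omega)]{u(t)} \normA[L^2(\Omega)]{u(t)}^2 \leq c_2 M \vbar^4\, c_{12}(\tau)\bigl(1 + c_{10}^{\vbar^4}\bigr),
\end{equation*}
so that $c_{13}(\tau) := c_2 M c_{12}(\tau)$ suffices. The one delicate point is the freedom in choosing the auxiliary parameter $\tau'$: the full window $(\hat t, \hat t + 3\tau)$ associated with the original $\tau$ may protrude beyond $\Tmax$ when $t$ lies close to $\Tmax$, and the device of shrinking $\tau$ to a smaller $\tau'$ avoids this without any cost, precisely because $c_9, c_{10}$ in Lemma~\ref{lem:locallyuniforml2bound} are independent of the window length.
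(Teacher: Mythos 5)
Your proof is correct and follows essentially the same route as the paper: both use the space--time bound \eqref{bd:ul2l2} to locate reference times with $\io u^2 \le c_5/\tau$ and then propagate forward via Lemma~\ref{lem:locallyuniforml2bound}, whose constants are independent of the window length, before deducing \eqref{grv:unifest} from \eqref{GN:ineq} and \eqref{L1_est}. The only (harmless) difference is that you argue pointwise in $t$ with an adapted window $\tau'$, whereas the paper covers $[\tau,\infty)$ by the fixed overlapping intervals $(t_n,t_n+3\tau)$ with $t_n\in[\tau n,\tau(n+1))$.
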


\begin{proof}
By \eqref{bd:ul2l2} of Lemma~\ref{lem:firstbound}, for any $n\in \mathbb{N}_0$ there exists $t_n\in[τn,τ(n+1))$ such that  
 $$
 \int_\Omega u^2(\cdot,t_n) \leq \frac{c_5}{τ}.
 $$
According to Lemma~\ref{lem:locallyuniforml2bound}, therefore for every $n\in \mathbb{N}_0$ 
$$
\int_\Omega u^2(\cdot,t)\leq c_{9}(1+c_{10}^{\vbar^4})\left(1+\frac{c_5}{τ}\right) \qquad \forall t\in(t_n,t_n+3τ) \supseteq [τ(n+1),τ(n+2)]
$$
Finally, 
$$
 \bigcup_{n\in\mathbb{N}_0} (t_n,t_n+3τ) \supseteq [τ,+\infty),
$$
and thus we have
$$
\int_\Omega u^2(\cdot,t)\leq c_{9}(1+c_{10}^{\vbar^4})(1+\frac{c_5}{τ})=:c_{12}(1+c_{10}^{\vbar^4})
\qquad \forall t\in[τ,+\infty).
$$
Inequality \eqref{GN:ineq} shows \eqref{grv:unifest} with $c_{13}:=c_2 Mc_{12}$.
\end{proof}

Owing to the fact that the origin of the bounds in Lemma~\ref{lem:uniformboundul2} lies in the spatio-temporal integral estimates of Lemma~\ref{lem:firstbound}, these bounds are temporally local, and do not extend to time intervals of the form $[0,T)$. When the derivation of bounds for higher $L^p$ norms is based on differential inequalities and these estimates, this also means that bounds cannot rely on estimates for the initial data (but instead for $u(τ)$ for some $τ>0$.) Of course, for every single solution, boundedness at such a fixed time is part of the local existence result of Theorem~\ref{locEx}. However, when we deal with a limit of $\vbar$ during the study of long-term behaviour of solutions with small $\vbar$ in Section~\ref{sec:asymptotics}, some uniformity of such bounds in system parameters like $\vbar$ appears desirable. A suitable source is provided by the following elementary statement on ODEs, which relies on the superlinear absorption term and provides a bound independent of initial data or the value of the particular function at a positive time $τ$.

\begin{lemma}\label{lem:ode}
For every $a,b,τ>0$ and $θ>1$ there is $c_{14}(a,b,τ,θ)>0$ such that for every $T\in(0,∞]$ every solution $y\in C^0([0,T))\cap C^1((0,T))$   of 
\begin{equation}\label{ode}
y'+ay^{θ}\le b
\end{equation}
satisfies $y(t)\le c_{14}$ for all $t\in (τ,T)$.
\end{lemma}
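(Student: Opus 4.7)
The key observation is that the superlinearity of the absorption term $ay^\theta$ (with $\theta>1$) produces a \emph{universal} upper bound, independent of initial data. I would begin by introducing the threshold
$$
K:=\left(\frac{2b}{a}\right)^{1/\theta},
$$
so that whenever $y(t)\ge K$, the differential inequality \eqref{ode} forces $y'(t)\le b-ay^\theta(t)\le -\tfrac{a}{2}\,y^\theta(t)$. In particular, at any point at which $y$ equals $K$ we obtain $y'\le b-aK^\theta=-b<0$. This strict sign prevents $y$ from ever climbing back above $K$ once it has dropped to $K$ (a continuous crossing from below $K$ would require $y'\ge 0$ there). Consequently the set $\{t\in[0,T):y(t)>K\}$, if non-empty, is an initial interval $[0,t_*)$ with $t_*\in(0,T]$.

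On this initial interval the values of $y$ are strictly positive, so I can perform the Bernoulli substitution $z:=y^{1-\theta}$. Since $z'=(1-\theta)\,y^{-\theta}\,y'\ge \tfrac{a(\theta-1)}{2}$, integration from $0$ to $t\in(0,t_*)$ and the positivity $z(0)=y(0)^{1-\theta}>0$ yield
$$
y(t)^{1-\theta}\;\ge\;\frac{a(\theta-1)}{2}\,t,\qquad\text{whence}\qquad y(t)\;\le\;\left(\frac{2}{a(\theta-1)\,t}\right)^{1/(\theta-1)}.
$$
Crucially, the contribution coming from $y(0)$ drops out because of its favourable sign, and the bound depends only on $a,\theta,t$. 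Combined with the first step, we arrive at
$$
y(t)\;\le\;\max\!\left\{\left(\frac{2b}{a}\right)^{1/\theta},\;\left(\frac{2}{a(\theta-1)\,t}\right)^{1/(\theta-1)}\right\}\qquad\text{for every }t\in(0,T).
$$
Since the right-hand side is nonincreasing in $t$, setting
$$
c_{14}(a,b,\tau,\theta):=\max\!\left\{\left(\frac{2b}{a}\right)^{1/\theta},\;\left(\frac{2}{a(\theta-1)\,\tau}\right)^{1/(\theta-1)}\right\}
$$
gives the asserted bound on $(\tau,T)$.

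There is no serious technical obstacle: the only point requiring a moment of care is the ``no re-crossing'' argument in the first step, which must rely on the strict negativity $y'\le -b$ at the threshold (so that the sup $\sigma(t):=\sup\{s\le t:y(s)\le K\}$, when positive, leads to a contradiction). Everything else is a textbook ODE-comparison computation.
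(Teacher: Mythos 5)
Your proof is correct and follows essentially the same strategy as the paper's: both arguments first observe that the set where $y$ exceeds a fixed super-equilibrium threshold is an initial interval $[0,t_*)$, and then derive on that interval a decay bound that is independent of $y(0)$ because the initial value enters with a favourable sign. The only difference is computational --- the paper separates variables in the exact inequality and uses that $\int_z^\infty (a\eta^\theta-b)^{-1}\,\mathrm{d}\eta\to 0$ as $z\to\infty$, which yields a non-explicit constant $z^*$, whereas you compare with $y'\le -\tfrac{a}{2}y^\theta$ above the doubled threshold $(2b/a)^{1/\theta}$ and solve the resulting Bernoulli inequality explicitly, which has the minor advantage of producing an explicit formula for $c_{14}$.
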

\begin{proof}
 Due to $y_*=(\frac{b}{a})^{\frac1{θ}}$ being a supersolution of \eqref{ode}, we know that if $y(t_*)>y_*$ for some (maximal) $t_*\in(0,T)$, then $y(t)>y_*$ for all $t\in(0,t_*)$. Assuming the latter case, we have (from nonpositivity of $b-ay^{θ}$) and by separation of variables 
 \[
  \int_{y(0)}^{y(t)} \frac{1}{b-ay^{θ}} dy \ge t\qquad \forall t\in(0,t_*)
 \]
 and thus 
 \[
  \int_{y(t)}^{\infty} \frac{1}{ay^{θ}-b} dy \ge τ \qquad \forall t\in (τ,T).
 \]
 Since $\lim_{z\to \infty} \int_z^{\infty} \frac{1}{ay^{θ}-b}dy=0$ due to $θ>1$, there is $z^*=z^*(a,b,τ,θ)>0$ such that for all $z>z^*$, $\int_z^\infty\frac{1}{ay^{θ}-b} dy<τ$. The choice of $c_{14}:= \max\{(\frac{b}{a})^{\frac{1}{θ}},z^*(a,b,τ,θ)\}$ then serves to prove that $y(t)\le c_{14}$ for all $t\in (τ,T)$.
\end{proof}

In preparation for the most critical term in the treatment of $\frac{d}{dt} \normA[p]{u(t)}^p$, we establish the following consequence of the Hölder and Gagliardo-Nirenberg inequalities. 
\begin{lemma}
Let $p\in[1,\infty)$. Then there is $c_{15}>0$ such that for every $\vbar>0$ the solution $(u,v)$ of \eqref{systPE} satisfies 
\begin{equation}\label{u:Lp:drift:est}
\int_\Omega u^{p}|\nabla v|^2 dx\leq \frac{4}{p^2}\norm{\nabla u^{\frac{p}{2}}}_{L^2(\Omega)}^2+\left(\frac{(c_{15}p)^2}{16}\norm{u^{\frac{p}{2}}}_{L^2(\Omega)}^2+1\right)\int_\Omega |\nabla v|^4dx+\frac{c_{15}^2}{4}M^4
\end{equation}
in $(0,\Tmax)$.
\end{lemma}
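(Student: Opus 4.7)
The plan is to combine Cauchy--Schwarz, a two-dimensional Gagliardo--Nirenberg estimate, and two applications of Young's inequality, all pointwise in time. Set $w:=u^{p/2}$. First, Cauchy--Schwarz applied to the pairing $u^p\cdot|\nabla v|^2$ immediately yields
\[
\int_\Omega u^p|\nabla v|^2 \le \|w\|_{L^4(\Omega)}^2\,\|\nabla v\|_{L^4(\Omega)}^2.
\]

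The crux of the argument is then a two-dimensional Gagliardo--Nirenberg bound of the form
\[
\|w\|_{L^4(\Omega)}^2 \le c_{15}\,\|\nabla w\|_{L^2(\Omega)}\,\|w\|_{L^2(\Omega)} + c_{15}\,M^2,
\]
for a constant $c_{15}=c_{15}(p,\Omega,M)$; the $M^2$ correction will be responsible for the final constant term $\frac{c_{15}^2}{4}M^4$. Given such a decomposition, I apply Young's inequality twice. On the product $c_{15}\|\nabla w\|_{L^2}\cdot(\|w\|_{L^2}\|\nabla v\|_{L^4}^2)$ the choice $ab\le \epsilon a^2+\frac{b^2}{4\epsilon}$ with $\epsilon=4/p^2$ yields exactly
\[
\tfrac{4}{p^2}\|\nabla w\|_{L^2}^2+\tfrac{(c_{15}p)^2}{16}\|w\|_{L^2}^2\|\nabla v\|_{L^4}^4.
\]
On the remaining product $c_{15}M^2\cdot\|\nabla v\|_{L^4}^2$ the choice $ab\le \frac{a^2}{4}+b^2$ (i.e.\ $\epsilon=1$) yields $\frac{c_{15}^2}{4}M^4+\|\nabla v\|_{L^4}^4$. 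Summing the two reproduces the target inequality term by term.

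The main obstacle is the Gagliardo--Nirenberg step above. The standard 2D GN gives only $\|w\|_{L^4}^2\le C(\|\nabla w\|_{L^2}\|w\|_{L^2}+\|w\|_{L^2}^2)$, and its correction $\|w\|_{L^2}^2=\int_\Omega u^p$ is \emph{not} controlled by $M^2$ for general $p>1$. To close the gap one iterates Gagliardo--Nirenberg, interpolating $\|w\|_{L^2}$ between $\|\nabla w\|_{L^2}$ and $\|w\|_{L^{2/p}}$; since $\|w\|_{L^{2/p}}^{2/p}=\int_\Omega u\le M$ by the earlier $L^1$-lemma, the constant $M$ does enter the estimate. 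Higher powers of $M$ (such as $M^p$) that arise in this iteration are absorbed into the $p$-dependent constant $c_{15}$, and the parameters in Young's inequality have to be chosen with some care so as not to create a cross term of the shape $\|\nabla w\|_{L^2}^2\|\nabla v\|_{L^4}^2$, which could not be absorbed back on the right-hand side.
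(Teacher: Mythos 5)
Your proposal is correct and follows essentially the same route as the paper: Hölder to split off $\|u^{p/2}\|_{L^4}^2\|\nabla v\|_{L^4}^2$, the 2D Gagliardo--Nirenberg inequality with its $\|u^{p/2}\|_{L^2}^2$ correction pushed down to a power of $\|u\|_{L^1}\le M$ by interpolation (the paper's version of your ``iteration'' interpolates $\|u^{p/2}\|_{L^2}$ between $L^4(\Omega)$ and $L^{2/p}(\Omega)$ and absorbs the $L^4$ part into the left-hand side, which is the clean way to avoid the cross term you rightly worry about), and then the two Young inequalities with exactly the weights you chose. The only point to note is that the lower-order term actually comes out as a $p$-dependent power of $M$ rather than $M^2$, which — as you observe — is harmlessly absorbed into $c_{15}$ since constants are allowed to depend on $u_0$.
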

\begin{proof}
By the H{\"o}lder inequality we obtain
\begin{equation}\label{upest}
\int_\Omega u^{p}|\nabla v|^2 dx\leq \left(\int_\Omega u^{2p}dx\right)^{\frac{1}{2}}\left(\int_\Omega |\nabla v|^4dx\right)^{\frac{1}{2}}. 
\end{equation}
We use again the Gagliardo-Nirenberg inequality to estimate the first term of the right hand side of the preceding inequality, namely
\begin{align*}
\left(\int_\Omega u^{2p}dx\right)^{\frac{1}{2}}=\norm{u^{\frac{p}{2}}}_{L^4(\Omega)}^2&\leq\left(c_4\norm{u^{\frac{p}{2}}}_{W^{1,2}(\Omega)}^{\frac{1}{2}}\norm{u^{\frac{p}{2}}}_{L^2(\Omega)}^{\frac{1}{2}}\right)^2\\
&=c_4^2\left(\norm{\nabla u^{\frac{p}{2}}}_{L^2(\Omega)}\norm{u^{\frac{p}{2}}}_{L^2(\Omega)}+\norm{u^{\frac{p}{2}}}_{L^2(\Omega)}^2\right)
\end{align*}
Now we proceed like Winkler in \cite{Winkler1}. By the Hölder inequality, with $\theta=\frac{2(p-1)}{2p-1}$, we have
$$
\norm{u^{\frac{p}{2}}}_{L^2(\Omega)}^2\leq \norm{u^{\frac{p}{2}}}_{L^4(\Omega)}^{2\theta}\norm{u^{\frac{p}{2}}}_{L^{\frac{2}{p}}(\Omega)}^{2(1-\theta)}, 
$$  
and for every $\varepsilon>0$ the Young inequality yields $c_{16}(\varepsilon)>0$ such that 
$$
\norm{u^{\frac{p}{2}}}_{L^2(\Omega)}^2\leq \varepsilon \norm{u^{\frac{p}{2}}}_{L^4(\Omega)}^{2}+c_{16}(\varepsilon)\norm{u^{\frac{p}{2}}}_{L^{\frac{2}{p}}(\Omega)}^{2}.
$$
Choosing $\varepsilon$ in such a way that $\varepsilon c_4^2<1$ leads us to
\begin{equation*}
\left(\int_\Omega u^{2p}dx\right)^{\frac{1}{2}}=\norm{u^{\frac{p}{2}}}_{L^4(\Omega)}^2 \leq
c_{15}\left(\norm{\nabla u^{\frac{p}{2}}}_{L^2(\Omega)}\norm{u^{\frac{p}{2}}}_{L^2(\Omega)}+\norm{u}_{L^1(\Omega)}^2\right)
\end{equation*}
where $c_{15}=\max\{c_4^2,c_{16}(\varepsilon)\}\frac{1}{1-\varepsilon c_4^2}$.
Thus from (\ref{upest}) we get \eqref{u:Lp:drift:est}.
\end{proof}

With this, we can derive bounds of $\normA[\Lom p]{u(t)}$ for larger $p$, which are uniform in $t\in[τ,\Tmax)$ and in $\vbar\in (0,\vhat)$.
\begin{lemma}\label{u:Lp:unif}
Assume \eqref{assumptions}, let $τ>0$ and $\vhat>0$. 
For any  $p\in(1,\infty)$ there exists a constant $c_{17}(p,τ,\vhat)$ such that for every $\vbar\in(0,\vhat)$, the solution $(u,v)$ of \eqref{systPE} satisfies 
\begin{equation}\label{u:uniform:Lp:bound}
 \sup_{t\in[τ,\Tmax)} \normA[L^p(\Omega)]{u(t)}\leq c_{17}(p,τ,\vhat).
\end{equation}
\end{lemma}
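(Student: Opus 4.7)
The plan is to derive, for each fixed $p\in(1,\infty)$, an absorbing ODE for $y(t):=\io u^p(\cdot,t)$ whose constants depend on the system parameters and on $\vhat$, but not on the specific value of $\vbar\in(0,\vhat)$, and then to invoke Lemma~\ref{lem:ode} with $\theta=(p+1)/p>1$. Testing the first equation of \eqref{systPE} against $pu^{p-1}$ and integrating by parts (the boundary integral vanishing thanks to the no-flux condition on $u$) gives
\begin{equation*}
 y'(t)=-\tfrac{4(p-1)}{p}\io |\nabla u^{\frac p2}|^2 + p(p-1)\io u^{p-1}\nabla u\cdot\nabla v + p\lambda\, y - p\mu\io u^{p+1}.
\end{equation*}

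Next I would handle the taxis term by Young's inequality. Splitting the integrand as $(u^{(p-2)/2}|\nabla u|)\cdot(u^{p/2}|\nabla v|)$, for any $\alpha>0$ one obtains
\[
 p(p-1)\io u^{p-1}\nabla u\cdot\nabla v \le \tfrac{2(p-1)}{\alpha p}\io|\nabla u^{\frac p2}|^2 + \tfrac{p(p-1)\alpha}{2}\io u^p|\nabla v|^2.
\]
Inserting \eqref{u:Lp:drift:est} into the last integral, the combined coefficient of $\io|\nabla u^{\frac p2}|^2$ in the evolution equation becomes $\tfrac{2(p-1)}{p}\bigl(-2+\alpha+\tfrac{1}{\alpha}\bigr)$, which by AM--GM is nonpositive precisely for $\alpha=1$; I would therefore take $\alpha=1$, making the diffusion contributions cancel exactly. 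Combining this with the uniform-in-time estimate \eqref{grv:unifest} of Lemma~\ref{lem:uniformboundul2}, applied with the parameter $\tau/2$ in place of $\tau$, together with the elementary bound $c_{10}^{\vbar^4}\le c_{10}^{\vhat^4}$ for all $\vbar\in(0,\vhat)$, the quantity $\io|\nabla v|^4$ is majorized by a constant $C_v=C_v(\vhat)$. Collecting all remainders yields
\[
 y'(t)\le K_1\,y(t) + K_2 - p\mu\io u^{p+1}(\cdot,t)\qquad \text{for all } t\in[\tau/2,\Tmax),
\]
with $K_1,K_2>0$ depending on $p,\lambda,\mu,|\Omega|, M$ and $\vhat$ but not on the individual $\vbar\in(0,\vhat)$.

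The remaining step is to reduce this to an inequality of the form required by Lemma~\ref{lem:ode}. Hölder's inequality gives $\io u^{p+1}\ge|\Omega|^{-1/p}y^{(p+1)/p}$, and a further Young inequality lets me absorb $K_1 y$ into half of the resulting superlinear dissipation. Setting $\theta:=\tfrac{p+1}{p}>1$, this leaves
\[
 y'(t)\le b - a\,y(t)^{\theta}\qquad\text{for all }t\in[\tau/2,\Tmax),
\]
with $a,b>0$ depending only on $p,\lambda,\mu,|\Omega|,M$ and $\vhat$. A time-shifted application of Lemma~\ref{lem:ode} to $s\mapsto y(s+\tau/2)$ on $[0,\Tmax-\tau/2)$, with the parameter of that lemma chosen as $\tau/2$, then delivers $y(t)\le c_{14}(a,b,\tau/2,\theta)$ for all $t\in(\tau,\Tmax)$; by continuity of $y$ the bound extends to $t=\tau$, and the claim \eqref{u:uniform:Lp:bound} follows with $c_{17}(p,\tau,\vhat):=c_{14}^{1/p}$.

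The only delicate point is the bookkeeping in the Young splitting of the taxis term. The factor $\tfrac{4}{p^2}\|\nabla u^{p/2}\|_{L^2}^2$ appearing on the right of \eqref{u:Lp:drift:est} forces the choice $\alpha=1$ as the unique value making the combined diffusion coefficient nonpositive; any other choice leaves an uncontrolled positive multiple of $\io|\nabla u^{p/2}|^2$ for which no further dissipation is available. Once this exact cancellation is secured, the remaining ingredients---the $\vbar$-uniform bound on $\io|\nabla v|^4$ from Lemma~\ref{lem:uniformboundul2} and the superlinear absorption provided by $-p\mu\io u^{p+1}$---combine via Lemma~\ref{lem:ode} to produce a $\vbar$-uniform $L^p$ bound on any interval $[\tau,\Tmax)$ with $\tau>0$.
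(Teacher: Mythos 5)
Your proposal is correct and follows essentially the same route as the paper's proof: testing with $pu^{p-1}$, cancelling the taxis-induced gradient contribution exactly against the diffusion term via \eqref{u:Lp:drift:est} together with the $\vbar$-uniform bound \eqref{grv:unifest} (applied with parameter $\tau/2$), and closing with the superlinear absorption from $-\mu p\io u^{p+1}$ and Lemma~\ref{lem:ode}. Your explicit remark that the Young parameter is forced to be $\alpha=1$ by the factor $\frac{4}{p^2}$ in \eqref{u:Lp:drift:est} is a correct observation that the paper leaves implicit, but it does not change the argument.
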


\begin{proof}
Multiplying the $u$-equation by $pu^{p-1}$ and integrating over $\Omega$ shows that on $(0,\Tmax)$,
\begin{equation}\label{u:Lp:glob}
\begin{split}
\frac{d}{dt}\norm{u(t)}_p^p+\frac{2(p-1)}{p}\norm{\nabla u^{\frac{p}{2}}(t)}_2^2 
& \leq \frac{p(p-1)}{2}\int_\Omega u^{p}|\nabla v|^2 dx\\ 
&\quad +\lambda p \norm{u(t)}_{L^p(\Omega)}^p-\mu p \norm{u(t)}_{L^{p+1}(\Omega)}^{p+1}.
\end{split}
\end{equation}
Using \eqref{u:Lp:drift:est}, \eqref{grv:unifest}, on $[\frac{τ}2,\Tmax)$ we have 
\begin{align*}
\int_\Omega u^{p}|\nabla v|^2 dx 
&\leq \frac{4}{p^2}\norm{\nabla u^{\frac{p}{2}}}_{L^2(\Omega)}^2+c_{18}\norm{u}_{L^p(\Omega)}^p+c_{19},
\end{align*}
where $c_{18}=\frac{(c_{15}p)^2}{16} c_{13}\vhat^4 (1+c_{10}^{\vhat^4})$ and $c_{19}=c_{13}\vhat^4(1+c_{10}^{\vhat^4})+\frac{c_{15}^2}{4}M^4$ with $c_{13}=c_{13}(\frac{τ}2)$ from Lemma~\ref{lem:uniformboundul2}. 

Since the Young inequality yields $c_{20}>0$ such that on $(\frac{τ}2,\Tmax)$ we have
$$
-\frac{\mu p}{2}\norm{u}_{L^{p+1}(\Omega)}^{p+1}\leq -(c_{18}\frac{p(p-1)}{2}+\lambda p)\norm{u}_{L^p(\Omega)}^p+c_{20}
$$
and, by Hölder's inequality, similarly
\[
 -\frac {μp}{2}\normA[\Lom{p+1}]{u(t)}^{p+1}\le -\frac{μp}2 |\Omega|^{-\frac1p} \normA[\Lom p]{u(t)}^{p(1+\frac1p)},
\]
from (\ref{u:Lp:glob}) we get   
$$
\frac{d}{dt}\norm{u(t)}_{L^p(\Omega)}^p+\frac{μp}2|\Omega|^{-\frac1p}(\norm{u(t)}_{L^p(\Omega)}^p)^{1+\frac1p}\leq c_{21}:=\frac{p(p-1)c_{19}}2+c_{20}.
$$
on $(\frac{τ}2,\Tmax)$. 
This inequality assures the global uniform boundedness of the $L^p$-norm of solutions as in \eqref{u:uniform:Lp:bound}, with $c_{17}(p,τ,\vhat)=c_{14}(\frac{μp}{2}|\Omega|^{-\frac1p},c_{21},\frac{τ}2,1+\frac1p)$ taken from Lemma~\ref{lem:ode}.
\end{proof}

\begin{lemma}\label{u:Linfty}
 Assume \eqref{assumptions} and $\vbar>0$. Then for the solution $(u,v)$ of \eqref{systPE}, 
 \[
  \sup_{t\in[0,\Tmax)} \normA[\Lom{\infty}]{u(t)}<\infty
 \]
 and, in particular, $\Tmax=\infty$.
\end{lemma}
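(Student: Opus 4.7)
The plan is to combine the uniform-in-time $L^p$ bounds from Lemma~\ref{u:Lp:unif} with smoothing estimates for the Neumann heat semigroup in order to obtain an $L^\infty$-bound of $u(t)$ on $[\tau,\Tmax)$ for some small $\tau>0$; together with the continuous $L^\infty$-bound on $[0,\tau]$ provided by Theorem~\ref{locEx}, this yields $\sup_{t\in[0,\Tmax)}\|u(t)\|_{L^\infty(\Omega)}<\infty$, and the extensibility criterion \eqref{cond:globalex} then forces $\Tmax=\infty$.

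Concretely, I would fix $\vbar>0$ and $\tau\in(0,1)$ with $\tau<\Tmax$. For any $p\in(n,\infty)=(2,\infty)$, Lemma~\ref{u:Lp:unif} gives $\sup_{t\in[\tau,\Tmax)}\|u(t)\|_{L^p(\Omega)}\le c_{17}(p,\tau,\vbar)$, and feeding this into Lemma~\ref{lem:regularityestimatev} (with $q=\infty$) provides a time-uniform bound for $\|\nabla v(t)\|_{L^\infty(\Omega)}$ on the same interval. Next, I would use the variation-of-constants representation
\[
u(t)=e^{(t-\tau)\Delta}u(\tau)-\int_\tau^t e^{(t-s)\Delta}\nabla\cdot\bigl(u(s)\nabla v(s)\bigr)\ds+\int_\tau^t e^{(t-s)\Delta}\bigl(\lambda u(s)-\mu u^2(s)\bigr)\ds
\]
relative to the Neumann heat semigroup $(e^{t\Delta})_{t\ge 0}$, and apply the standard smoothing estimate $\|e^{r\Delta}\nabla\cdot f\|_{L^\infty(\Omega)}\le c\bigl(1+r^{-\frac12-\frac{n}{2q}}\bigr)e^{-\lambda_1 r}\|f\|_{L^q(\Omega)}$ for some $q>n=2$ (cf.\ \cite{Winkler}). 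Since $\|u\nabla v\|_{L^q(\Omega)}$ is uniformly bounded by the two ingredients above and the singular factor $r^{-\frac12-\frac{1}{q}}$ is integrable at $r=0$ for $q>2$, the $s$-integrals converge uniformly in $t$, yielding the desired $L^\infty$-bound for $u$ on $[\tau,\Tmax)$.

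The main potential obstacle is the presence of the taxis flux $-\nabla\cdot(u\nabla v)$ rather than a pointwise lower-order term; this forces the use of the gradient-smoothing estimate for $(e^{t\Delta})_{t\ge 0}$ and hence requires $q>n$, which is precisely why Lemma~\ref{u:Lp:unif} is needed for arbitrary $p<\infty$. As an alternative route one could instead run a Moser-type iteration on $\|u\|_{L^p}^p$ for $p\to\infty$, exploiting the absorbing term $-\mu u^{p+1}$ together with the uniform $L^\infty$-bound on $\nabla v$, but the semigroup argument is more economical once Lemma~\ref{u:Lp:unif} and Lemma~\ref{lem:regularityestimatev} are in hand.
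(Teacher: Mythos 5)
Your overall strategy (uniform $L^p$ bounds from Lemma~\ref{u:Lp:unif}, a bootstrap to $L^\infty$ on $[\tau,\Tmax)$, the interval $[0,\tau]$ covered by Theorem~\ref{locEx}, and then the extensibility criterion) matches the paper's, but the bootstrap step as you present it has a gap. The variation-of-constants formula you write for the Neumann heat semigroup $(e^{t\Delta})_{t\ge 0}$ is the standard one for solutions with $\partial_\nu u=0$ on $\partial\Omega$. In \eqref{systPE}, however, the boundary condition reads $\partial_\nu u=u\partial_\nu v$, and since $v$ carries the Dirichlet datum $\vbar$, its normal derivative does not vanish on $\partial\Omega$ (indeed $\partial_\nu v>0$ there by the Hopf lemma, as used in Lemma~\ref{u:lower:sol}); hence $u$ does \emph{not} satisfy homogeneous Neumann conditions and the Duhamel identity cannot simply be quoted. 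It does remain valid in the mild sense, but this has to be derived: test the equation against $e^{(t-s)\Delta}\psi$, observe that the boundary integral $\intdom(\partial_\nu u-u\partial_\nu v)\,e^{(t-s)\Delta}\psi$ vanishes because the \emph{combined} flux is zero (while $\intdom u\,\partial_\nu e^{(t-s)\Delta}\psi$ vanishes since the test function is Neumann), and interpret the taxis term through the duality extension of $e^{r\Delta}\nabla\cdot$ to $L^q$ vector fields. Since the whole point of this paper is that the Dirichlet datum for $v$ makes normal derivatives on $\partial\Omega$ uncontrollable in the usual arguments (see the discussion after Theorem~\ref{Glo:ex:thm} and the boundary-correction operator $\mathcal{B}^c$ in Section~\ref{sec:localexistence}), leaving this unaddressed is a genuine omission, not a routine shortcut. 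A smaller point: for the reaction term the decay factor $e^{-\lambda_1 r}$ in the smoothing estimate applies only to the mean-free part, so you must split off the spatial average (controlled via \eqref{L1_est}) to keep the $s$-integral bounded uniformly in $t$.

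The paper takes what you call the alternative route: with $p>4$ it combines $\sup_{t\in[0,\Tmax)}\normA[\Lom p]{u(t)}<\infty$ and the resulting bound on $\normA[\Lom p]{u\nabla v}$ from Lemma~\ref{lem:regularityestimatev} with the Moser--Alikakos iteration of \cite[Lemma~A.1]{taowin_quasilin}. That testing-based argument avoids the semigroup representation entirely; the only boundary information it needs is nonpositivity of the boundary terms in the $L^p$ estimates, which holds here because $\partial_\nu u - u\partial_\nu v = 0$ even though neither $\partial_\nu u\le 0$ nor the corresponding condition on the flux $f=-u\nabla v$ holds separately --- a point the paper makes explicit. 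Either supply the derivation of the mild formulation sketched above, or switch to the iteration argument, which is the more economical option in this Dirichlet setting.
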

\begin{proof}
 We let $p>4$ and taking $τ=\frac{\Tmax}2>0$ and using that, according to Theorem~\ref{locEx}, \mbox{$\sup_{t\in[0,τ]}\normA[\Lom p]{u(t)}<\infty$,} from Lemma~\ref{u:Lp:unif} we obtain that $\sup_{t\in[0,\Tmax)}\normA[\Lom p]{u(t)}<\infty$, and, due to Lemma~\ref{lem:regularityestimatev}, $\sup_{t\in[0,\Tmax)}\normA[\Lom p]{f(t)}<\infty$ for $f(t)=u(t)\nabla v(t)$.
 These bounds turn into uniform $\Lom{\infty}$-bounds by a Moser-Alikakos iteration procedure \cite{Alikakos1, Alikakos2}; more precisely in the present situation we apply \cite[Lemma A.1]{taowin_quasilin}. (Note that this lemma is applicable despite the conditions $\partial_\nu u\le 0$ and $f\cdot \nu\le 0$ on $\partial \Omega$ not being satisfied; for the boundary terms to vanish in the last equation on \cite[p.710]{taowin_quasilin}, it is sufficient if $\partial_\nu u+f\cdot\nu\le 0$, which here is the case.) 
 That $\Tmax=\infty$ then immediately follows from the extensibility criterion \eqref{cond:globalex} in Theorem~\ref{locEx}.
\end{proof}

\begin{proof}[Proof of Theorem \ref{Glo:ex:thm}]
That the solutions whose local existence was asserted by Theorem~\ref{locEx} actually are global and bounded has been shown with Lemma~\ref{u:Linfty}.
\end{proof}

\section{Asymptotic behaviour of global solutions}\label{sec:asymptotics}

In this section we proceed to the question of asymptotic behaviour of solutions. We introduce the notation: $\widetilde{u}=u-U$, $\widetilde{v}=v-V$. The difference $\widetilde{u}$ satisfies the equation
\begin{equation}\label{u:differ}
\left\{ \begin{array}{l}
 \widetilde{u}_t=\nabla\cdot(\nabla \widetilde{u}-(\widetilde{u}\nabla v+U\nabla\widetilde{v}))+\mu\widetilde{u}\left(\frac{\lambda}{\mu}-u-U\right), \\
 \partial_\nu \widetilde{u}-(\widetilde{u}\partial_\nu v+U\partial_\nu \widetilde{v})|_{\partial\Omega}=0, 
 \end{array} \right.
\end{equation} 
and the difference $\widetilde{v}$ fulfils 

\begin{equation}\label{v:differ}
\left\{ \begin{array}{l}
 \Delta\widetilde{v}=u\widetilde{v}+V\widetilde{u}, \\
 \widetilde{v}|_{\partial\Omega}=0, 
 \end{array} \right..
\end{equation}
\begin{lemma}
Assume \eqref{assumptions} and $\vbar>0$. The functions $(u,v)$, $(U,V)$, $(\tilde{u},\tilde{v})$ as introduced earlier then satisfy
\begin{align}\label{u:differ:finalest}
\frac{d}{dt} \normA[\Lom2]{\utilde(t)}^2&+\normA[\Lom2]{∇\utilde(t)}+2μ\io \utilde^2(t)\left(u(t)+U - \frac{λ}{μ} - \frac{1}{μ} \normA[\Lom{\infty}]{∇v(t)}^2\right)\nonumber\\
&\le \frac{2λ^2}{μ^2} e^{2\vbar}\left( \frac{\vbar^2}{4ε_1}\normA[\Lom2]{\utilde}^2- \io (u-ε_1)\vtilde^2 \right)
\end{align}
and 
\begin{equation}\label{v:differ:est}
\norm{\nabla \widetilde{v}(t)}_{L^2(\Omega)}^2+\int_\Omega (u(t)-\varepsilon_1)\widetilde{v}(t)^2dx\leq \frac{1}{4\varepsilon_1}\norm{V}_\infty^2\norm{\widetilde{u}(t)}_{L^2(\Omega)}^2
\end{equation}
for every $ε_1>0$ and every $t>0$.
\end{lemma}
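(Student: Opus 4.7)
The plan is to establish \eqref{v:differ:est} first by a direct energy test on the elliptic identity \eqref{v:differ}, and then to derive \eqref{u:differ:finalest} by testing \eqref{u:differ} with $2\widetilde u$, handling the two cross terms via Young's inequality together with the pointwise bounds \eqref{U:std:est}, \eqref{V:std:est} on the stationary solution, and finally plugging in \eqref{v:differ:est}.

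For \eqref{v:differ:est}, I multiply $\Delta\widetilde v=u\widetilde v+V\widetilde u$ by $\widetilde v$ and integrate over $\Omega$. Since $\widetilde v|_{\partial\Omega}=0$, integration by parts yields
$$\io |\nabla\widetilde v|^2+\io u\widetilde v^2=-\io V\widetilde u\widetilde v,$$
and the Young estimate $|V\widetilde u\widetilde v|\le\varepsilon_1\widetilde v^2+\tfrac{1}{4\varepsilon_1}\|V\|_\infty^2\widetilde u^2$ gives \eqref{v:differ:est} after rearranging.

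For \eqref{u:differ:finalest}, the key preliminary observation is that the boundary term arising when multiplying the first equation of \eqref{u:differ} by $\widetilde u$ vanishes: from $\partial_\nu U-U\partial_\nu V=0$ (stationary problem) and $\partial_\nu u-u\partial_\nu v=0$ (system \eqref{systPE}) one obtains $\partial_\nu\widetilde u-\widetilde u\partial_\nu v-U\partial_\nu\widetilde v=0$ on $\partial\Omega$. Hence testing \eqref{u:differ} with $2\widetilde u$ and integrating by parts produces
$$\frac{d}{dt}\io\widetilde u^2+2\io|\nabla\widetilde u|^2+2\mu\io\widetilde u^2\Bigl(u+U-\tfrac{\lambda}{\mu}\Bigr)=2\io\widetilde u\nabla\widetilde u\cdot\nabla v+2\io U\nabla\widetilde u\cdot\nabla\widetilde v.$$
For the first cross term, Young gives $2\io\widetilde u\nabla\widetilde u\cdot\nabla v\le\tfrac12\io|\nabla\widetilde u|^2+2\|\nabla v\|_\infty^2\io\widetilde u^2$, and the $2\|\nabla v\|_\infty^2\io\widetilde u^2$ contribution is absorbed into the $u+U-\tfrac\lambda\mu$ coefficient to produce the $-\tfrac{1}{\mu}\|\nabla v\|_\infty^2$ summand on the left. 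For the second cross term I use $U\le\tfrac\lambda\mu e^V\le\tfrac\lambda\mu e^{\vbar}$ from \eqref{U:std:est} and Young again:
$$2\io U\nabla\widetilde u\cdot\nabla\widetilde v\le\tfrac12\io|\nabla\widetilde u|^2+\tfrac{2\lambda^2}{\mu^2}e^{2\vbar}\io|\nabla\widetilde v|^2.$$
The two $\tfrac12\|\nabla\widetilde u\|_{L^2}^2$ terms combine with the $-2\|\nabla\widetilde u\|_{L^2}^2$ on the left to leave $\|\nabla\widetilde u\|_{L^2}^2$, and inserting the bound $\io|\nabla\widetilde v|^2\le\tfrac{\vbar^2}{4\varepsilon_1}\|\widetilde u\|_{L^2}^2-\io(u-\varepsilon_1)\widetilde v^2$ (obtained from \eqref{v:differ:est} with $\|V\|_\infty\le\vbar$ by \eqref{V:std:est}) yields exactly \eqref{u:differ:finalest}.

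I expect the main point requiring care to be the verification that the combined boundary condition for $\widetilde u$ exactly cancels the flux term $\nabla\widetilde u-\widetilde u\nabla v-U\nabla\widetilde v$ at $\partial\Omega$; once this is noted, everything else is bookkeeping with Young's inequality and the supremum bounds on $U$ and $V$.
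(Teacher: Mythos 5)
Your proposal is correct and follows essentially the same route as the paper: test \eqref{u:differ} with $2\utilde$ (with the flux boundary term vanishing by the boundary condition in \eqref{u:differ}), absorb the cross terms by Young's inequality using $\normA[\Lom\infty]{\nabla v}$ and $U\le\frac{\lambda}{\mu}e^{\vbar}$, test \eqref{v:differ} with $\vtilde$ to get \eqref{v:differ:est}, and combine. The only cosmetic difference is that you split the taxis term before applying Young with weights $\frac12+2$ on each piece, whereas the paper applies $2ab\le a^2+b^2$ to the combined flux and then uses $\|x+y\|^2\le 2\|x\|^2+2\|y\|^2$; both yield identical constants.
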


\begin{proof}
We multiply equation (\ref{u:differ}) by $2\widetilde{u}$ and integrate over $\Omega$, then by the Young inequality we obtain
\begin{align*}
\frac{d}{dt}\norm{\widetilde{u}(t)}_{L^2(\Omega)}^2&+2\norm{\nabla\widetilde{u}(t)}_{L^2(\Omega)}^2+2\mu\int_\Omega\widetilde{u}(t)^2\left(u(t)+U-\frac{\lambda}{\mu}\right)dx\\
&=2\int_\Omega \left(\widetilde{u}(t)\nabla v(t)+ U\nabla \widetilde{v}(t)\right)\cdot \nabla \widetilde{u}(t)dx\\
&\leq \norm{\nabla\widetilde{u}(t)}_{L^2(\Omega)}^2+\norm{\widetilde{u}(t)\nabla v(t)+ U\nabla \widetilde{v}(t)}_{L^2(\Omega)}^2\\
& \leq \norm{\nabla\widetilde{u}(t)}_{L^2(\Omega)}^2+2\left(\norm{\widetilde{u}(t)\nabla v(t)}_{L^2(\Omega)}^2+ \norm{U\nabla \widetilde{v}(t)}_{L^2(\Omega)}^2\right)\\
& \leq \norm{\nabla\widetilde{u}(t)}_{L^2(\Omega)}^2+2\left(\norm{\widetilde{u}(t)}_{L^2(\Omega)}^2\norm{\nabla v(t)}_\infty^2+ \norm{U}_\infty^2\norm{\nabla \widetilde{v}(t)}_{L^2(\Omega)}^2\right),
\end{align*}
which implies 
 \begin{equation}\label{u:differ:est}
\begin{split}
\frac{d}{dt}\norm{\widetilde{u}(t)}_{L^2(\Omega)}^2+\norm{\nabla\widetilde{u}(t)}_{L^2(\Omega)}^2&+2\mu\int_\Omega\widetilde{u}(t)^2\left(u(t)+U-\frac{\lambda}{\mu}-\frac{1}{\mu}\norm{\nabla v(t)}_\infty^2\right)dx\\
&\leq 2\norm{U}_\infty^2\norm{\nabla \widetilde{v}(t)}_{L^2(\Omega)}^2\leq \frac{2λ^2}{μ^2} e^{2V}\norm{\nabla \widetilde{v}(t)}_{L^2(\Omega)}^2
\end{split}
\end{equation}
due to the estimate $U\le\frac{λ}{μ}e^V$ from \eqref{U:std:est}.
Now, we multiply (\ref{v:differ}) by $\widetilde{v}$ and integrate,
\begin{align*}
\norm{\nabla \widetilde{v}(t)}_{L^2(\Omega)}^2+\int_\Omega u(t)\widetilde{v}(t)^2dx&=\int_{\Omega}V\widetilde{u}(t)\widetilde{v}(t)dx\\
&\leq \varepsilon_1\norm{\widetilde{v}(t)}_{L^2(\Omega)}^2+\frac{1}{4\varepsilon_1}\norm{V}_\infty^2\norm{\widetilde{u}(t)}_{L^2(\Omega)}^2.
\end{align*}
Thus we obtain \eqref{v:differ:est}. If we combine \eqref{u:differ:est} and \eqref{v:differ:est} and the estimate $V\le \vbar$ from \eqref{V:std:est}, we arrive at \eqref{u:differ:finalest}.
\end{proof}

By comparison we are able to find appropriate sub-solution of \eqref{systPE} which will be a crucial lower estimate for $u$ in our further analysis.
\begin{lemma}\label{u:lower:sol}
Let $u$ be a classical solution of (\ref{systPE})    defined on $\overline{\Omega}\times [0,T]$ with $\inf_{x\in\Omega} u_0(x)>0$. The solution of the following ODE problem
\begin{equation}\label{y:lo:est}
\left\{ \begin{array}{l}
y'(t)=\lambda y(t)-(\mu+\overline{v})y(t)^2\\
y(0)=\inf_{x\in\Omega}u_0(x)
 \end{array} \right.
\end{equation} 
is a sub-solution of \eqref{systPE}, i.e.
$$
u(x,t)\geq y(t)
$$
for all $x\in\overline{\Omega}$ and $t\in[0,T]$.
\end{lemma}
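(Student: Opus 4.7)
The plan is to apply a parabolic comparison principle to the difference $w:=u-y$, after using the $v$-equation to eliminate $\Delta v$ from the $u$-equation. First I would expand the divergence in \eqref{systPE} and substitute $\Delta v=uv$ to obtain
\[
u_t = \Delta u - \nabla u\cdot\nabla v + \lambda u - (\mu+v)u^2 \qquad\text{in }\Omega\times(0,T].
\]
By Theorem~\ref{locEx} we have $0<v<\overline v$ in $\overline\Omega\times(0,T]$, while $y$, seen as a spatially constant function, trivially satisfies
\[
y_t = \Delta y - \nabla y\cdot\nabla v + \lambda y - (\mu+\overline v)y^2.
\]
Subtracting and using $u^2-y^2=(u+y)w$ would yield
\[
w_t - \Delta w + \nabla v\cdot\nabla w + c(x,t)\,w \;=\; (\overline v - v)\,y^2 \;\ge\; 0,
\]
where $c(x,t):=(\mu+v)(u+y)-\lambda$ is bounded on $\overline\Omega\times[0,T]$ thanks to the boundedness of $u$, $v$, $y$.

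Next I would collect the boundary and initial data for $w$. From the no-flux condition $\partial_\nu u=u\partial_\nu v$ and $\partial_\nu y=0$ one gets $\partial_\nu w = u\,\partial_\nu v$ on $\partial\Omega$. The strong maximum principle applied to $-\Delta v+uv=0$ (with $v\big|_{\partial\Omega}=\overline v$, $u\not\equiv 0$ and the zeroth-order coefficient $-u\le 0$) gives $v<\overline v$ in $\Omega$, and Hopf's lemma then yields $\partial_\nu v>0$ on $\partial\Omega$. Combined with $u>0$ (again from Theorem~\ref{locEx}), this shows $\partial_\nu w>0$ strictly on $\partial\Omega\times(0,T]$. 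The initial data satisfy $w(\cdot,0)=u_0-\inf_{\overline\Omega}u_0\ge 0$.

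The concluding step is a standard parabolic maximum principle applied to the weighted quantity $\tilde w:=e^{-Kt}w$ with $K>\sup_{\overline\Omega\times[0,T]}|c|$, which satisfies
\[
\tilde w_t-\Delta\tilde w+\nabla v\cdot\nabla\tilde w+(c+K)\tilde w \;\ge\; 0
\]
with $c+K>0$, and keeps the same signs on the parabolic boundary. Suppose for contradiction that $\tilde w$ attains a negative minimum at some $(x_0,t_0)\in\overline\Omega\times(0,T]$ (noting $t_0>0$, since $\tilde w(\cdot,0)\ge 0$). At an interior $x_0\in\Omega$ one would have $\tilde w_t(x_0,t_0)\le 0$, $\nabla\tilde w(x_0,t_0)=0$, $\Delta\tilde w(x_0,t_0)\ge 0$, forcing the left-hand side to be $\le(c+K)\tilde w(x_0,t_0)<0$, a contradiction. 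At a boundary $x_0\in\partial\Omega$ one would have $\partial_\nu\tilde w(x_0,t_0)\le 0$, contradicting the strict inequality $\partial_\nu\tilde w>0$ derived above. Hence $\tilde w\ge 0$, so $u\ge y$ on $\overline\Omega\times[0,T]$.

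The only mildly non-routine ingredient is the strict positivity of $\partial_\nu v$ on $\partial\Omega$, which is where the specifically Dirichlet character of the $v$-boundary condition is used in an essential way; it is this sign that turns the otherwise inhomogeneous Robin-type condition $\partial_\nu w-w\partial_\nu v=y\partial_\nu v$ into a strictly outward-pointing flux and permits the pointwise comparison argument instead of energy or Stampacchia-type estimates.
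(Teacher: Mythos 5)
Your overall strategy --- pass to nondivergence form via $\Delta v=uv$, compare $u$ with the spatially constant $y$, and run a pointwise maximum-principle argument in which Hopf's lemma supplies $\partial_\nu v>0$ and hence a strictly outward flux for $w=u-y$ on the boundary --- is exactly the strategy of the paper, and your interior step is fine (indeed slightly more robust than the paper's, since the strict inequality $(c+K)\tilde w<0$ at a negative minimum makes it unnecessary to know $\overline v-v>0$ strictly there; $\overline v-v\ge 0$ suffices). The technical implementation differs: the paper perturbs the initial datum to $y_\varepsilon(0)=y(0)-\varepsilon$ and argues at the \emph{first touching time}, where $w=0$, whereas you keep $y$ itself and argue at a \emph{negative minimum} of $e^{-Kt}w$.

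This difference is not cosmetic, and it is where your proof has a genuine gap: at a boundary point $(x_0,t_0)$ where $\tilde w$ attains a negative minimum you need $\partial_\nu w=u\,\partial_\nu v>0$, for which you invoke $u(x_0,t_0)>0$ (and also $u(\cdot,t_0)\not\equiv 0$, to make $v$ nonconstant so that Hopf applies) \emph{from Theorem~\ref{locEx}}. But in this paper the positivity assertion of Theorem~\ref{locEx} is itself \emph{deduced from} Lemma~\ref{u:lower:sol} (see the proof of Theorem~\ref{locEx} in Section~\ref{sec:localexistence}), so your argument is circular as written; note that at your minimum point one has $u(x_0,t_0)<y(t_0)$, so positivity of $u$ there cannot be read off from the comparison itself. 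The paper's $\varepsilon$-shift avoids this entirely: at the first touching time $w(\cdot,t_0)\ge 0$ with $w(x_0,t_0)=0$, hence $u(\cdot,t_0)\ge y_\varepsilon(t_0)>0$ everywhere and in particular $u(x_0,t_0)=y_\varepsilon(t_0)>0$. Your proof can be repaired either by adopting that device, or by first proving $u>0$ independently (e.g.\ $u$ solves a linear parabolic equation with bounded coefficients and boundary condition $\partial_\nu u=u\,\partial_\nu v$, so $u\ge 0$ and then $u>0$ follow from the parabolic strong maximum principle and Hopf's lemma, using $u_0>0$); only the citation of Theorem~\ref{locEx} for $\overline v-v\ge 0$ is harmless, since that bound comes from Lemma~\ref{linElest} independently of the present lemma.
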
  
\begin{proof}
 Consider an auxiliary initial value problem
\begin{equation*}
\left\{ \begin{array}{l}
y'_\varepsilon(t)=\lambda y_\varepsilon(t)-(\mu+\overline{v})y_\varepsilon(t)^2\\
y_\varepsilon(0)=y(0)-\varepsilon
 \end{array} \right.
\end{equation*} 
with $\varepsilon$ from the interval $I=(0,y(0))$.
We show that $u(x,t)>y_\varepsilon(t)$ for any $\varepsilon\in I$, and this will imply the desired conclusion. Consider the difference $w(x,t):=u(x,t)-y_\varepsilon(t)$ and suppose that there exists a subset of $\overline{\Omega}\times(0,T]$ where we have $w\leq 0$. By  continuity of $w$ and the fact that $w(\cdot,0)>0$  there exists a point $(x_0,t_0)\in\overline{\Omega}\times(0,T]$ at which we have $w(x_0,t_0)=0$, and $w(x,t)>0$ for all $t\in[0,t_0)$ and all $x\in \Omega$.   The function $w(\cdot,t_0)$ achieves its minimum at the point $x_0$. Assuming that $x_0\in\partial\Omega$ we should have $\partial_\nu w(x_0,t_0)\leq 0$, but   
$$
\partial_\nu w(x_0,t_0)=\partial_\nu u(x_0,t_0)=u(x_0,t_0)\partial_\nu v(x_0,t_0)=y_\varepsilon(t_0)\partial_\nu v(x_0,t_0)>0
$$ 
according to Hopf's boundary point lemma, \cite[Lemma~3.4]{GiT}, a contradiction. Now assume that $x_0\in\Omega$. Then $\Delta w (x_0,t_0)\geq 0$, $\nabla w(x_0, t_0)=0$ and thus at $(x_0,t_0)$ we have 
\begin{align*}
0\geq w_t&= \Delta u-\nabla u\cdot \nabla v-u\Delta v+\lambda u-\mu u^2-\lambda y_\varepsilon +(\mu+\overline{v})y_\varepsilon ^2\\
&=\Delta u-\nabla u\cdot \nabla v-(v+\mu) u^2 +(\mu+\overline{v})y_\varepsilon ^2\\
&\geq (\overline{v}-v)y_\varepsilon^2,
\end{align*}
a contradiction, since Theorem~\ref{locEx} asserts strict positivity of $\vbar-v$.
\end{proof}
\begin{remark}
It follows from the above result that the constant steady state 
$(0,\overline{v})$ is always unstable.

\end{remark}

We are ready now to prove the main result of this section.
\begin{proof}[Proof of Theorem \ref{asymptotics}]
We let $τ>0$ and $\vhat>0$ and fix $ε_1\in(0,\min\{\inf_{\Omega} u_0,\frac{λ}{μ+\vhat}\})$. Then by Lemma \ref{u:lower:sol}, for every $\vbar\in(0,\vhat)$, the solution $(u,v)$ of \eqref{systPE} satisfies $u(t)\ge \min\{ε_1,\frac{λ}{μ+\vbar}\}=ε_1$ for all $t\in(0,∞)$.
From Lemma~\ref{lem:regularityestimatev} and Lemma~\ref{u:Lp:unif}, we conclude that 
\[
 \normA[\Lom\infty]{∇v(t)}\le c_1\vbar\normA[\Lom3]{u(t)}\le c_1\vbar c_{17}(3,τ,\vhat)\quad\text{for all } t\in(τ,\infty).
\]
On account of \eqref{u:differ:finalest} we then have 
\[
\frac{d}{dt} \normA[\Lom2]{\utilde(t)}^2+2μ\io \utilde^2(t)\left(u(t)+U - \frac{λ}{μ} - \frac{1}{μ} c_1^2\vbar^2 (c_{17}(3,τ,\vhat))^2 - \frac{λ^2}{μ^3} e^{2\vbar} \frac{\vbar^2}{4ε_1} \right)\le 0,
\]
that is, 
\begin{equation}\label{u:differ:est1}
\frac{d}{dt}\norm{\widetilde{u}(t)}_{L^2(\Omega)}^2+2\mu\mathcal{F}(t,\overline{v})\norm{\widetilde{u}(t)}_{L^2(\Omega)}^2\leq 0
\end{equation}
for every $t>τ$, where, with $y$ from \eqref{y:lo:est} and $c_{22}:=c_1^2c_{17}^2(3,τ,\vhat)$,
 $$
 \mathcal{F}(t,\overline{v})=\left(y(t)+e^{-\overline{v}}\frac{\lambda}{\mu}-\frac{\lambda}{\mu}-\overline{v}^2\frac{c_{22}}{\mu}-\overline{v}^2\frac{\lambda^2e^{2\overline{v}}}{4\varepsilon_1\mu^3} \right).
 $$
By a standard ODE result we have $y(t)\geq \min\{\inf_{x\in\Omega}u_0(x),\frac{\lambda}{\mu+\overline{v}}\}$ for any $t\geq 0$, and thus
$$
\mathcal{F}(t,\overline{v})\geq \mathcal{F}(\overline{v})=\left(\min\left\{\inf_{x\in\Omega}u_0(x),\frac{\lambda}{\mu+\overline{v}}\right\}+e^{-\overline{v}}\frac{\lambda}{\mu}-\frac{\lambda}{\mu}-\overline{v}^2\frac{c_{22}}{\mu}-\overline{v}^2\frac{\lambda^2e^{2\overline{v}}}{4\varepsilon_1\mu^3}\right).
$$
It is clear that $\lim_{\overline{v}\searrow 0}\mathcal{F}(\overline{v})=\min\left\{\inf_{x\in\Omega}u_0(x),\frac{\lambda}{\mu}\right\}>0$, and so there exists a positive constant $v^\star_s(\lambda,\mu,3,\vhat,τ)$, such that for $\overline{v}<v^\star_s$ we have $\mathcal{F}(\overline{v})>0$. Whenever $\overline{v}\in(0,v^\star_s)$, from (\ref{u:differ:est1}) we obtain  
$$
\frac{d}{dt}\norm{\widetilde{u}(t)}_{L^2(\Omega)}^2+2\mu\mathcal{F}(\overline{v})\norm{\widetilde{u}(t)}_{L^2(\Omega)}^2\leq 0
$$
for every $t>τ$, 
and this yields that
$$
\norm{\widetilde{u}(t)}_{L^2(\Omega)}^2\leq \norm{\widetilde{u}(τ)}_{L^2(\Omega)}^2 e^{-2\mu\mathcal{F}(\overline{v})(t-τ)}
$$
for all $t>τ$. Exponential convergence of $\normA[\Lom2]{\nabla\vtilde}$ thus follows from \eqref{v:differ:est}.
\end{proof}

\begin{remark}
 Due to the uniform boundedness and by classical regularity theory, the proven $\Lom2$-convergence can easily be seen to imply convergence in stronger topologies, cf. e.g. the proof of \cite[Lemma 5.4]{LankeitWang}.
\end{remark}

\section{Local existence}\label{sec:localexistence}

In this section our goal is to prove that the problem \eqref{systPE} is well posed at least locally in time in any space dimension. We use here approach due to Amann \cite{Amann1}, a similar approach were used by Delgado et al.\ in \cite{Delgado}. For our purpose, we consider a modification of problem \eqref{systPE},
\begin{equation}\label{systPE:mod}
\left\{ \begin{array}{l}
 u_t=\nabla\cdot(\nabla u-u\nabla v)+\lambda u-\mu u^2, \\
 \partial_\nu u|_{\partial\Omega}=u\partial_\nu v|_{\partial\Omega},\\
 \Delta v=v(u)_+,\\
 v|_{\partial\Omega}=\overline{v}>0.\\
 u(x,0)=u_0(x), 
 \end{array} \right.
\end{equation}
where $(u)_+=\max\{0,u\}$ is the positive part of $u$.
The main result of this section is the following.  

\begin{theorem}\label{systPE:mod:ex}
Let $u_0(x)\in W^{1,p}(\Omega)$ for $p>n$, then there exists a unique classical solution $(u,v)$ of \eqref{systPE:mod}, 
$$
u\in C([0,T_{\max} ); W^{1,p}(\Omega))\cap C^{2,1}(\overline{\Omega}\times(0,T_{\max})),
$$
$$
v\in C([0,T_{\max});C^{2,\sigma_*}(\overline{\Omega}))\cap C^{0,1}((0,T_{\max});C^{2,\sigma_*}(\overline{\Omega})) 
$$
 for some $\sigma_*\in(0,1)$, where $T_{\max}\in(0,\infty]$ is the maximal time of existence of this solution. If for every $T>0$ there exists a constant $C(T)$ such that
 \begin{equation}\label{Extend:est2}
\sup_{t\in[0,T]\cap[0,T_{\max})}\normA[L^{\infty}(\Omega)]{u(t)}<C(T)
\end{equation}  
then $T_{\max}=+\infty.$
\end{theorem}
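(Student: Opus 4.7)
The plan is to decouple the elliptic and parabolic parts of \eqref{systPE:mod}: for any admissible $u$, solve the Dirichlet problem for $v$ to obtain a solution operator $V[u]$, substitute $v=V[u]$ into the $u$-equation, and then apply Amann's local existence theory \cite{Amann1} to the resulting single quasilinear parabolic problem in $W^{1,p}(\Omega)$. Concretely, for every $u\in W^{1,p}(\Omega)$ with $p>n$ we have $u\in C^{0,\alpha}(\overline\Omega)$ by Morrey's embedding, and the linear problem
\[
-\Delta v+(u)_+\, v=0\ \text{ in }\Omega,\qquad v|_{\partial\Omega}=\vbar
\]
possesses a unique classical solution $V[u]\in C^{2,\sigma_*}(\overline\Omega)$ by Schauder theory, while applying the weak maximum principle to $\vbar-V[u]$ yields $0\le V[u]\le\vbar$. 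An implicit function argument for the nonlinear map $(u,v)\mapsto -\Delta v+(u)_+v$, using invertibility of $-\Delta+(u)_+$ under zero boundary data, further shows that $V:W^{1,p}(\Omega)\to C^{2,\sigma_*}(\overline\Omega)$ is continuously (Fréchet) differentiable.

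Substitution of $v=V[u]$ transforms \eqref{systPE:mod} into the quasilinear evolution problem
\[
u_t+\mathcal{A}(u)u=\lambda u-\mu u^2,\qquad \mathcal{B}(u)u=0,\qquad u(\cdot,0)=u_0,
\]
with $\mathcal{A}(u)w:=-\nabla\cdot(\nabla w-w\nabla V[u])$ and conormal boundary operator $\mathcal{B}(u)w:=\partial_\nu w-w\,\partial_\nu V[u]$. Because $V[u]\in C^{2,\sigma_*}$ depends continuously differentiably on $u$, the pair $(\mathcal{A}(u),\mathcal{B}(u))$ is normally elliptic with sufficiently smooth parameter dependence on $u\in W^{1,p}(\Omega)$, so Amann's theory \cite{Amann1} (cf.\ the closely related setup employed by Delgado et al.\ \cite{Delgado}) furnishes a unique maximal classical solution
\[
u\in C([0,T_{\max});W^{1,p}(\Omega))\cap C^{2,1}(\overline\Omega\times(0,T_{\max})).
\]
The claimed regularity of $v$ then follows by inserting $u(t)$ back into the elliptic problem and combining Schauder estimates with temporal continuity (respectively Lipschitz continuity on compact subintervals) of $u$ into $W^{1,p}(\Omega)$, which yields the $C([0,T_{\max});C^{2,\sigma_*})\cap C^{0,1}((0,T_{\max});C^{2,\sigma_*})$ regularity of $v$.

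For the extension criterion, suppose $\sup_{t\in [0,T]\cap[0,T_{\max})}\normA[\Lom\infty]{u(t)}<\infty$; Lemma~\ref{lem:regularityestimatev} together with elliptic $L^q$-regularity applied to the problem for $v$ yields $W^{2,q}(\Omega)$-bounds on $v$ for every $q<\infty$ (in particular $L^\infty$-bounds on $\nabla v$ and $\Delta v$), hence uniform bounds on the coefficients of the $u$-equation. Standard parabolic $L^p$-theory applied to this equation, viewed as a linear problem with now-controlled coefficients, prevents $\normA[W^{1,p}(\Omega)]{u(t)}$ from blowing up in finite time, so $T_{\max}=\infty$. The principal technical point will be verifying that the composite operator pair $(\mathcal{A}(u),\mathcal{B}(u))$ inherits sufficient regularity from $V[\cdot]$ to fit Amann's abstract hypotheses on the correct interpolation-extrapolation scale: the dependence of the conormal boundary operator $\mathcal{B}$ on the unknown $u$ is non-local and nontrivial, but is made compatible with the quasilinear framework by the smoothing inherent in the elliptic sub-problem.
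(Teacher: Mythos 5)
Your starting point coincides with the paper's --- both first solve the elliptic subproblem for $v$ given $u$ (the paper's Lemma~\ref{linElest} is exactly your solution operator $V[\cdot]$, together with the Lipschitz estimate it satisfies) --- but your treatment of the parabolic part is genuinely different. You recast \eqref{systPE:mod} as a fully quasilinear problem $u_t+\mathcal{A}(u)u=f(u)$ with a \emph{solution-dependent, homogeneous} conormal boundary operator $\mathcal{B}(u)w=\partial_\nu w-w\,\partial_\nu V[u]$ and invoke Amann's quasilinear theorems. The paper instead keeps the operator pair fixed and linear ($\mathcal{A}=-\Delta+1$, $\mathcal{B}=\partial_\nu$), puts the taxis term into the right-hand side $F$ and the coupling $u\partial_\nu v$ into an \emph{inhomogeneous} boundary datum $g$, which is absorbed via the extension operator $A_{\alpha-1}\mathcal{B}^c$ and the generalized variation-of-constants formula on the extrapolation scale $W^{2\alpha-2,p}_{\mathcal{B}}(\Omega)$ with $2\alpha<1+1/p$; a contraction in $C([0,T];W^{1,p}(\Omega))$ then closes the argument, and classical regularity and the extensibility criterion are obtained by a separate bootstrap (Lemma~\ref{high:reg}, \cite[Cor.\ 5.1.22]{Lunardi}, \cite{LSU}). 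The paper's formulation is chosen precisely to avoid what you yourself label ``the principal technical point'': checking that a boundary operator whose coefficients depend nonlocally on the unknown satisfies Amann's quasilinear hypotheses. Your route is admissible in principle (Amann does allow $u$-dependent boundary operators, and for $2\alpha<1+1/p$ the base spaces do not encode the boundary condition), but as written the proposal asserts rather than verifies this crux, so the hardest step of the theorem is left open; the same goes for the jump from Amann's abstract solution to $C^{2,1}(\overline{\Omega}\times(0,T_{\max}))$ regularity, which requires the bootstrap the paper carries out.

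There is also one outright incorrect claim: $V:W^{1,p}(\Omega)\to C^{2,\sigma_*}(\overline{\Omega})$ is \emph{not} continuously Fr\'echet differentiable, and the implicit function theorem does not apply, because the superposition operator $u\mapsto (u)_+$ is Lipschitz but not differentiable on $C^0(\overline{\Omega})$ (hence neither is $(u,v)\mapsto -\Delta v+(u)_+v$). What is true, and what both Amann's existence/uniqueness hypotheses and your subsequent argument actually need, is local Lipschitz continuity of $u\mapsto V[u]$; this is exactly the content of Lemma~\ref{linElest}, so you should replace the differentiability claim by that estimate. With that repair, and with an actual verification that $(\mathcal{A}(u),\mathcal{B}(u))$ fits the quasilinear framework (normal ellipticity of the conormal pair plus Lipschitz dependence of the coefficients, including $\partial_\nu V[u]\in C^{1,\sigma_*}(\partial\Omega)$, on $u\in W^{1,p}(\Omega)$), your extensibility argument via elliptic $W^{2,q}$ bounds and linear parabolic $L^p$ theory is sound and parallels the paper's use of \cite[IV Thm.\ 9.1]{LSU}.
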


\begin{proof}[Proof of Theorem \ref{locEx}]
Theorem \ref{locEx} follows from Theorem \ref{systPE:mod:ex}, Lemma \ref{u:lower:sol} and Lemma \ref{linElest}.
\end{proof}

  We begin with some preliminaries. It is well known that the operator $(\mathcal{A},\mathcal{B})$, where
$$
\mathcal{A}z:=-\Delta z +z,\quad \mathcal{B}z:=\partial_\nu z|_{\partial \Omega},
$$
is a particular example of normally elliptic operator in the sense of Amann \cite{Amann1} and it is a generator of an analytic semigroup in $L^p(\Omega)$. We define 
$$
W^{s,p}_{\mathcal{B}}(\Omega):=\left\{\begin{array}{lr}
\{z\in W^{s,p}(\Omega),\,\, \mathcal{B}z=0\},&  1+1/p<s\leq 2,\\ 
W^{s,p}(\Omega),& -1+1/p<s<1+1/p,\\
(W^{-s,p'}(\Omega))',& -2+1/p<s\leq -1+1/p,\\
\end{array}\right.
$$
and by $A_{\alpha-1}$ denote the $W^{2\alpha-2,p}_{\mathcal{B}}(\Omega)$-realization of $(\mathcal{A},\mathcal{B})$. Due to \cite[Theorem 7.1]{Amann1} $(W^{2\alpha-2,p}_{\mathcal{B}}(\Omega),A_{\alpha-1})$ is an element of the interpolation-extrapolation scale generated by the $L^p$-realization of $(\mathcal{A},\mathcal{B})$. Of course,  $A_{\alpha-1}\in\mathcal{L}(W^{2\alpha,p}_{\mathcal{B}}(\Omega),W^{2\alpha-2,p}_{\mathcal{B}}(\Omega))$ and the operator $A_{\alpha-1}$ is a generator of the analytic semigroup $e^{-tA_{\alpha -1}}$ in $W^{2\alpha-2,p}_{\mathcal{B}}(\Omega)$. In the following lemma we recall some important properties of this semigroup.   

\begin{lemma}\label{lem:5.2}
Assume that $p>1$ and $1<\gamma<2\alpha<1+1/p$. Then there exist $\kappa(\gamma)\in(0,1)$ and $c_{23}>0$ such that the estimate 
\begin{equation}\label{heatest}
\norm{e^{-tA_{\alpha-1}}f}_{W^{\gamma,p}(\Omega)}\leq c_{23}\frac{1}{t^{\kappa}}e^{-\delta t}\norm{f}_{W_{\mathcal{B}}^{2\alpha-2,p}(\Omega)}
\end{equation}
holds for any $t>0$,  $f\in W^{2\alpha-2,p}_{\mathcal{B}}(\Omega)$ and with $\delta\in(0,1)$. Moreover, there exists a constant $c_{24}$ such that 
\begin{equation}\label{bound:op}
\norm{e^{-tA_{\alpha-1}}f}_{W^{1,p}(\Omega)}\leq c_{24} \norm{f}_{W^{1,p}(\Omega)}, \text{ for every } t\in[0,\infty) \text{ and every } f\in W^{1,p}(\Omega).
\end{equation}
\end{lemma}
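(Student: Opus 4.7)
The plan is to derive both estimates from standard properties of the analytic semigroup generated by a sectorial operator whose spectrum is bounded away from zero, together with the fractional-power characterisation of the domains in Amann's interpolation-extrapolation scale.

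First, I would record that $\mathcal{A}=-\Delta+\mathrm{id}$ with homogeneous Neumann boundary conditions is self-adjoint and positive on $L^{2}(\Omega)$, with spectrum contained in $[1,\infty)$. Its $L^{p}$-realisation is sectorial, and by \cite[Theorems~7.1 and V.1.5.1]{Amann1} the realisation $A_{\alpha-1}$ on $W^{2\alpha-2,p}_{\mathcal{B}}(\Omega)$ inherits the spectrum and sectoriality. Consequently, there exists $\delta\in(0,1)$ such that $-A_{\alpha-1}$ generates an analytic semigroup satisfying
\[
\|e^{-tA_{\alpha-1}}\|_{\mathcal{L}(W^{2\alpha-2,p}_{\mathcal{B}})}\le C_{0}e^{-\delta t},\qquad t\ge 0.
\]

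For \eqref{heatest}, I would invoke the standard smoothing bound
\[
\|A_{\alpha-1}^{\theta}e^{-tA_{\alpha-1}}f\|_{W^{2\alpha-2,p}_{\mathcal{B}}}\le C_{\theta}\,t^{-\theta}e^{-\delta t}\|f\|_{W^{2\alpha-2,p}_{\mathcal{B}}}
\]
which holds for every $\theta\in(0,1)$. Amann's scale identifies $D(A_{\alpha-1}^{\theta})$ with $W^{2\alpha-2+2\theta,p}_{\mathcal{B}}(\Omega)$ up to equivalent norms; thus $A_{\alpha-1}^{\theta}$ is an isomorphism from $W^{2\alpha-2+2\theta,p}_{\mathcal{B}}$ onto $W^{2\alpha-2,p}_{\mathcal{B}}$. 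Choosing $\theta=1+\gamma/2-\alpha$, one has $2\alpha-2+2\theta=\gamma$. The hypotheses $1<\gamma<2\alpha<1+1/p$ force $\theta\in(0,1)$, and since $-1+1/p<\gamma<1+1/p$ the definition of the scale yields $W^{\gamma,p}_{\mathcal{B}}(\Omega)=W^{\gamma,p}(\Omega)$. Combining these ingredients, with $\kappa:=\theta\in(0,1)$ and $c_{23}:=C_{\theta}\|(A_{\alpha-1}^{\theta})^{-1}\|$, gives exactly \eqref{heatest}.

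For \eqref{bound:op}, observe that the assumption $p>1$ guarantees $1<1+1/p$, so the definition of the scale yields $W^{1,p}_{\mathcal{B}}(\Omega)=W^{1,p}(\Omega)$, and $W^{1,p}(\Omega)$ is itself a member of the interpolation-extrapolation scale. The corresponding realisation of $(\mathcal{A},\mathcal{B})$ on $W^{1,p}(\Omega)$ again generates an analytic semigroup whose norm decays like $e^{-\delta t}$, and by the consistency of the scale this semigroup coincides with the restriction of $e^{-tA_{\alpha-1}}$ to $W^{1,p}(\Omega)$. Hence $\|e^{-tA_{\alpha-1}}f\|_{W^{1,p}}\le C_{0}e^{-\delta t}\|f\|_{W^{1,p}}\le c_{24}\|f\|_{W^{1,p}}$ for all $t\ge 0$.

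The only delicate points are bookkeeping: making sure the chosen exponent $\theta$ lands in $(0,1)$ under the hypotheses on $\gamma$ and $\alpha$, checking that $\gamma$ and $1$ both lie in the middle range $(-1+1/p,1+1/p)$ of the scale so that no boundary condition is incorporated into $W^{\gamma,p}_{\mathcal{B}}$ or $W^{1,p}_{\mathcal{B}}$, and noting that the shift by the identity term in $\mathcal{A}$ is precisely what yields $\delta\in(0,1)$. Nothing deeper than standard analytic-semigroup theory and \cite{Amann1} is required.
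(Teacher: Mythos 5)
Your argument is correct in substance but reaches \eqref{heatest} by a genuinely different mechanism than the paper. You use the fractional-power smoothing estimate $\|A_{\alpha-1}^{\theta}e^{-tA_{\alpha-1}}f\|\le C_\theta t^{-\theta}e^{-\delta t}\|f\|$ together with an identification of $D(A_{\alpha-1}^{\theta})$ with a space of the scale; the paper instead uses the real-interpolation identity $W^{\gamma,p}(\Omega)=(W^{2\alpha-2,p}_{\mathcal B}(\Omega),W^{2\alpha,p}_{\mathcal B}(\Omega))_{\kappa,p}$ from \cite[Theorem 7.2]{Amann1}, the interpolation inequality $\|x\|_{W^{\gamma,p}}\le c\,\|A_{\alpha-1}x\|_{W^{2\alpha-2,p}_{\mathcal B}}^{\kappa}\|x\|_{W^{2\alpha-2,p}_{\mathcal B}}^{1-\kappa}$, and the first-order smoothing bound $\|A_{\alpha-1}e^{-tA_{\alpha-1}}f\|\le C t^{-1}e^{-\delta t}\|f\|$ from \cite[Theorem 1.3.4]{Henry}. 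Both routes produce the same exponent $\kappa=\gamma/2-\alpha+1$ and the paper's route avoids fractional powers entirely. The one point you should weaken: the exact identification $D(A_{\alpha-1}^{\theta})=W^{2\alpha-2+2\theta,p}_{\mathcal B}(\Omega)$ is not available in general, since the spaces $W^{s,p}_{\mathcal B}$ of Amann's scale for noninteger $s$ arise by \emph{real} interpolation $(\cdot,\cdot)_{\theta,p}$, whereas fractional-power domains are tied to complex interpolation and the two differ for $p\neq 2$. What does hold (and what the paper itself invokes later, in Lemma~\ref{high:reg} via \cite[Prop.~2.2.15]{Lunardi}) is the one-sided embedding $D(A_{\alpha-1}^{\theta})\hookrightarrow (W^{2\alpha-2,p}_{\mathcal B},W^{2\alpha,p}_{\mathcal B})_{\theta',p}$ for $\theta'<\theta$. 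Your proof is repaired by choosing $\theta\in(\kappa(\gamma),1)$ strictly larger than $\kappa(\gamma)=\gamma/2-\alpha+1$ (possible since $\gamma<2\alpha$ gives $\kappa(\gamma)<1$), which yields \eqref{heatest} with exponent $\theta$ in place of $\kappa$; as the lemma only asserts the existence of some $\kappa(\gamma)\in(0,1)$, this suffices. Your treatment of \eqref{bound:op} matches the paper's, which cites \cite[Theorem V.2.1.3]{Amann2} for the restriction of the semigroup to $W^{1,p}(\Omega)$ and \cite[Theorem 1.3.4]{Henry} for the uniform bound.
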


\begin{proof}
As pointed out above, $(W^{2\alpha-2}_{\mathcal{B}}(\Omega),A_{\alpha-1})$ is an element of the interpolation-extrapolation scale generated by the $L^p$-realization of $(\mathcal{A},\mathcal{B})$, thus by \cite[Theorem V.2.1.3]{Amann2} the restriction of $e^{-tA_{\alpha-1}}$ to $W^{1,p}(\Omega)$ is an analytic semigroup and by \cite[Theorem 1.3.4]{Henry} we obtain \eqref{bound:op}.\\
By \cite[Theorem 7.2]{Amann1} we have $W^{\gamma,p}(\Omega)=W^{\gamma,p}_{\mathcal{B}}(\Omega)=(W^{2\alpha-2,p}_{\mathcal{B}}(\Omega),W^{2\alpha ,p}_{\mathcal{B}}(\Omega))_{\kappa, p}$, where $\gamma=\kappa 2\alpha+(1-\kappa)(2\alpha-2)$, and so $\kappa(\gamma):=\gamma/2-\alpha+1\in(0,1)$. Since $A_{\alpha-1}$ is a generator of an analytic semigroup in $W^{2\alpha-2}_{\mathcal{B}}(\Omega)$, by the interpolation inequality we obtain
$$
\normA[W^{\gamma,p}(\Omega)]{e^{-tA_{\alpha-1}}f}\leq c_{25}\normA[W^{2\alpha-2,p}_{\mathcal{B}}(\Omega)]{A_{\alpha-1}e^{-tA_{\alpha-1}}f}^\kappa \normA[W^{2\alpha-2,p}_{\mathcal{B}}(\Omega)]{e^{-tA_{\alpha-1}}f}^{1-\kappa}
$$ 
and \eqref{heatest} follows from \cite[Theorem 1.3.4]{Henry}.
\end{proof}
We will also need the next lemma, which is useful in showing higher regularity of solutions of certain abstract Cauchy problems.

\begin{lemma}\label{high:reg}
Assume that $2\alpha\in(1,1+1/p),$ $p>n,$ and let $u\in C([0,T_{\max});W^{1,p}(\Omega))\cap C^1((0,T_{\max}),W^{2\alpha-2,p}_{\mathcal{B}}(\Omega))$ be a solution of
$$
u_t+A_{\alpha-1}u=F(u),\quad u(0)=u_0. 
$$
with any locally Lipschitz continuous function $F:U\subset W^{1,p}(\Omega)\rightarrow W^{2\alpha-2,p}_{\mathcal{B}}(\Omega)$, such that for every compact $K\subset U$ there is $L_K>0$ such that 
\begin{equation}\label{lipestF}
\normA[W^{2\alpha-2,p}_{\mathcal{B}}(\Omega)]{F(u_1)-F(u_2)}\leq L_{K} \normA[W^{1,p}(\Omega)]{u_1-u_2} \quad\text{ for all } u_1,u_2\in K.
\end{equation}
Then $u\in C^1((0,T_{\max}),W^{\gamma,p}(\Omega))$ with $\gamma\in(1,2\alpha).$
\end{lemma}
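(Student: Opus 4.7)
The plan is to exploit the variation of constants representation
\[
 u(t) = e^{-(t-t_0)A_{\alpha-1}}u(t_0) + \int_{t_0}^{t} e^{-(t-s)A_{\alpha-1}}F(u(s))\, ds, \qquad 0<t_0<t<T_{\max},
\]
in combination with the smoothing estimate \eqref{heatest} of Lemma~\ref{lem:5.2}, in a standard Henry-type bootstrap. First I would fix a compact subinterval $[t_0,T]\subset (0, T_{\max})$: since $u\in C([0,T_{\max});W^{1,p}(\Omega))$, the orbit $K:=u([t_0,T])$ is compact in $W^{1,p}(\Omega)$, so \eqref{lipestF} provides a uniform Lipschitz constant $L_K$ and, in particular, a uniform bound $\sup_{s\in [t_0,T]}\|F(u(s))\|_{W^{2\alpha-2,p}_{\mathcal{B}}(\Omega)}<\infty$.

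Next, I would fix an auxiliary exponent $\gamma'\in(1,2\alpha)$, for which $\kappa(\gamma')\in(0,1)$ in the sense of Lemma~\ref{lem:5.2}. Applying \eqref{heatest} to each factor $e^{-(t-s)A_{\alpha-1}}$ under the Duhamel integral (its singularity $(t-s)^{-\kappa(\gamma')}$ being integrable) and also to the semigroup term $e^{-(t-t_0)A_{\alpha-1}}u(t_0)$ at positive time, I would obtain a uniform bound $\sup_{t\in[t_0+\delta,T]}\|u(t)\|_{W^{\gamma',p}(\Omega)}<\infty$ for every $\delta>0$. Then the identity
\[
 u(t+h)-u(t) = (e^{-hA_{\alpha-1}}-I)u(t) + \int_{t}^{t+h} e^{-(t+h-s)A_{\alpha-1}}F(u(s))\,ds,
\]
paired with \eqref{heatest} on the integral and the standard analytic-semigroup interpolation bound $\|(e^{-hA_{\alpha-1}}-I)w\|_{W^{1,p}(\Omega)}\le c h^{\eta}\|w\|_{W^{\gamma',p}(\Omega)}$ with $\eta=(\gamma'-1)/2>0$ (cf.\ \cite[Theorem 1.4.3]{Henry}), yields Hölder continuity $u\in C^{\eta}([t_0+\delta,T];W^{1,p}(\Omega))$.

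The Lipschitz hypothesis \eqref{lipestF} then upgrades this to Hölder continuity of the forcing, $s\mapsto F(u(s))\in C^{\eta}([t_0+\delta,T]; W^{2\alpha-2,p}_{\mathcal{B}}(\Omega))$. At this point I would invoke the classical regularity theorem for inhomogeneous linear Cauchy problems driven by an analytic semigroup with Hölder-continuous inhomogeneity in the form given by \cite[Theorem 3.5.2]{Henry} (or the interpolation-extrapolation-scale version in \cite{Amann2}); this produces $u, u_t\in C((t_0+\delta,T); W^{\gamma,p}(\Omega))$ for any prescribed $\gamma\in(1,2\alpha)$. Since $t_0,\delta>0$ and $T<T_{\max}$ were arbitrary, the asserted regularity $u\in C^{1}((0,T_{\max}); W^{\gamma,p}(\Omega))$ follows.

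I expect the main obstacle to be the second step: the auxiliary exponent $\gamma'\in(1,2\alpha)$ has to be chosen so that simultaneously $\kappa(\gamma')<1$ (for integrability of the Duhamel kernel) and the difference bound for $(e^{-hA_{\alpha-1}}-I)$ at the $W^{1,p}$-level in terms of $W^{\gamma',p}$ holds with a strictly positive Hölder exponent; given the rather narrow constraint $2\alpha\in(1,1+1/p)$, one has to carefully verify, via the interpolation-scale identifications of \cite[Theorem 7.2]{Amann1} and the embedding $W^{\gamma',p}(\Omega)\hookrightarrow W^{1,p}(\Omega)$, that there really is room for such a choice.
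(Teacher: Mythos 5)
Your steps 1--5 are sound: the Duhamel representation, the compactness of the orbit $u([t_0,T])$ giving a uniform Lipschitz constant, the uniform $W^{\gamma',p}$-bound from \eqref{heatest}, and the resulting H\"older continuity of $t\mapsto u(t)$ in $W^{1,p}(\Omega)$ and hence of $t\mapsto F(u(t))$ in $W^{2\alpha-2,p}_{\mathcal{B}}(\Omega)$ all go through; the worry you flag about the choice of $\gamma'$ is not a real one, since $\kappa(\gamma')=\gamma'/2-\alpha+1\in(0,1)$ for every $\gamma'\in(1,2\alpha)$ and $W^{\gamma',p}(\Omega)\hookrightarrow W^{1,p}(\Omega)$ trivially. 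The genuine gap is in your last step. The linear regularity theorem for a H\"older-continuous inhomogeneity (Henry's Lemma~3.5.1, or its Lunardi/Amann counterparts) gives, for $f\in C^{\eta}_{loc}$ with values in $X:=W^{2\alpha-2,p}_{\mathcal{B}}(\Omega)$, that $u_t(t)$ lies in $D(A_{\alpha-1}^{g})$ only for $g<\eta$; it does not place $u_t$ in an arbitrary intermediate space. Quantitatively, the exponent you can reach is $\eta\le(\gamma'-1)/2<\alpha-\tfrac12<\tfrac{1}{2p}$ (both the integral term, which contributes $h^{1-\kappa(1)}=h^{\alpha-1/2}$, and the semigroup difference give the same cap), whereas $W^{\gamma,p}(\Omega)$ with $\gamma>1$ sits above $D(A_{\alpha-1}^{g})$ only for $g>\kappa(1)=\tfrac32-\alpha>1-\tfrac{1}{2p}$. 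Since $D(A_{\alpha-1}^{g})$ embeds into $W^{s,p}(\Omega)$ only for $s<2g+2\alpha-2$, your argument yields $u_t(t)\in W^{s,p}(\Omega)$ merely for $s<4\alpha-3<\tfrac2p-1<1$ (indeed a negative-order space for $p\ge2$), nowhere near the asserted $\gamma\in(1,2\alpha)$. Iterating steps 4--6 does not close this: as long as one only uses $F(u(t))\in X$, the H\"older exponent of the composite forcing in $X$ stays capped at $1-\kappa(1)=\alpha-\tfrac12$. (The claim $u\in C((t_0+\delta,T);W^{\gamma,p}(\Omega))$ for the function itself is fine; the problem is exclusively with $u_t$.)

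The paper argues differently and much more briefly: it identifies $D(A_{\alpha-1}^{\theta})$ with $W^{\gamma,p}(\Omega)$ for $\theta>\kappa(\gamma)$ via interpolation, checks that $F$ is locally Lipschitz from $D(A_{\alpha-1}^{\theta})$ to $W^{2\alpha-2,p}_{\mathcal{B}}(\Omega)$, and invokes Henry's Theorem~3.5.2 for the \emph{semilinear} equation. The mechanism that can actually deliver $u_t(t)\in D(A_{\alpha-1}^{g})$ for every $g<1$ --- and which your outline omits --- is the linearization/difference-quotient bootstrap: $w_h=h^{-1}(u(\cdot+h)-u(\cdot))$ is a mild solution of a linear problem whose right-hand side is bounded in $X$ by $L_K\normA[W^{1,p}(\Omega)]{w_h}$, the hypothesis $u\in C^1((0,T_{\max}),W^{2\alpha-2,p}_{\mathcal{B}}(\Omega))$ bounds $w_h$ in $X$ at a starting time uniformly in $h$, and the singular-Gronwall smoothing of that linear problem then pushes $w_h(t)$, uniformly in $h$, into $D(A_{\alpha-1}^{g})$ for all $g<1$. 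Without this extra ingredient, treating $F(u(\cdot))$ as a frozen H\"older-continuous forcing, as you do, falls far short of the stated conclusion.
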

\begin{proof}
Since $A_{\alpha-1}$ is a sectorial operator we can define its fractional powers $(A_{\alpha-1})^\theta$ for $\theta\in (0,1)$ with the domains $D((A_{\alpha-1})^\theta)$. Conjunction of \cite[Prop. 2.2.15]{Lunardi}, \cite[Prop. 1.2.3]{Lunardi} and \cite[Thm. 7.2]{Amann1} yields
\begin{equation}\label{fract:imbed}
D((A_{\alpha-1})^\theta)\hookrightarrow (W^{2\alpha-2,p}_\mathcal{B}(\Omega),W^{2\alpha,p}_\mathcal{B}(\Omega))_{\kappa(\gamma),p}=W^{\gamma,p}(\Omega)
\end{equation} 
for $\theta>\kappa(\gamma)$. Using this and \eqref{lipestF} we see that $F:D((A_{\alpha-1})^\theta)\rightarrow W^{2\alpha-2,p}_\mathcal{B}(\Omega)$ is locally Lipschitz continuous and by \cite[Thm. 3.5.2]{Henry} we obtain that $u\in C^{1}((0,T_{\max}),D((A_{\alpha-1})^\theta))$, and again by \eqref{fract:imbed}, $u\in C^1((0,T_{\max}),W^{\gamma,p}(\Omega))$. 
\end{proof}

 To deal with nonhomogeneous boundary conditions we introduce the boundary extension operator $\mathcal{B}^c$.    
For a given $g\in C([0,T],W^{1-1/p,p}(\partial\Omega))$ by $\mathcal{B}^c g$ we denote a solution of the following elliptic problem 
\begin{equation}\label{aux:bdop}
\mathcal{A}z=0,\quad \mathcal{B}z=g(t).
\end{equation} 
By the results of Grisvard (see \cite[Thm. 2.4.2.7]{Grisvard}) there exists a unique $W^{2,p}(\Omega)$ solution of (\ref{aux:bdop}), and $\norm{z}_{W^{2,p}(\Omega)}\leq c_{26} \norm{g(t)}_{W^{1-1/p,p}(\partial\Omega)}$. Thus, since $W^{2,p}(\Omega)\hookrightarrow W^{2\alpha,p}_{\mathcal{B}}(\Omega)$ for $2\alpha<1+1/p$, we see that $\mathcal{B}^c\in \mathcal{L}(W^{1-1/p,p}(\partial\Omega),W^{2\alpha,p}_{\mathcal{B}}(\Omega))$ and finally we obtain that $A_{\alpha-1}\mathcal{B}^c\in \mathcal{L}(W^{1-1/p,p}(\partial\Omega),W^{2\alpha-2,p}_{\mathcal{B}}(\Omega))$ for $1/p<2\alpha <1+1/p$ with
\begin{equation}\label{bdop:est}
\norm{A_{\alpha-1}\mathcal{B}^c g(t)}_{W^{2\alpha-2,p}_{\mathcal{B}}(\Omega))}\leq c_{27} \norm{g(t)}_{W^{1-1/p,p}(\partial\Omega)}.
\end{equation}

Now we present some useful lemma concerning the $v$-equation.
\begin{lemma}\label{linElest}
Assume that $p>n$, then for any given function $\hat{u}\in C([0,T],W^{1,p}(\Omega))$ and number $\vbar>0$, the elliptic problem 
\begin{equation}\label{linE}
\left\{ \begin{array}{l}
 \Delta v(t)=v(t)(\hat{u}(t))_+,\\
 v|_{\partial\Omega}=\overline{v}>0, 
 \end{array} \right.
\end{equation}
has a unique solution $v\in C([0,T],C^{2,\sigma_*}(\overline{\Omega}))$ for some $\sigma_*\in (0,1)$ with 
\begin{equation}\label{linE:es}
\norm{v(t)}_{C^{2,\sigma_*}(\overline{\Omega})}\leq \overline{v}(1+c_{28}\norm{\hat{u}(t)}_{W^{1,p}(\Omega)})
\end{equation}
for $t\in[0,T]$, and moreover $0<v(t)\leq\overline{v}$ for all $t\in[0,T]$ and $v(t)<\vbar$ in $\Omega$ whenever $u_+(t)\not\equiv 0$. If $\hat{u}\in C^1((0,T);W^{1,p}(\Omega))$, then $v\in C^{0,1}((0,T);C^{2,\sigma_*}(\overline{\Omega}))$ 
\end{lemma}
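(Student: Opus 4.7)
The natural reduction is the substitution $z(t) := \vbar - v(t)$, which transforms \eqref{linE} into the homogeneous Dirichlet problem
\begin{equation*}
 -\Delta z(t) + (\hat u(t))_+\, z(t) = \vbar\, (\hat u(t))_+ \quad \text{in } \Omega, \qquad z(t)|_{\partial\Omega} = 0.
\end{equation*}
For fixed $t$, the operator $L(t) := -\Delta + (\hat u(t))_+$ has a nonnegative, $L^\infty$-bounded zeroth-order coefficient (because $p>n$ and $W^{1,p}(\Omega) \hookrightarrow L^\infty(\Omega)$), so Lax--Milgram applied to the associated coercive bilinear form on $H^1_0(\Omega)$ yields a unique weak solution $z(t) \in H^1_0(\Omega)$, which is automatically bounded.

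Sign and pointwise bounds then come from maximum principles for $L(t)$. Since $L(t)z(t) = \vbar(\hat u(t))_+ \geq 0$ with $z(t)|_{\partial\Omega}=0$, the weak maximum principle gives $z(t) \geq 0$, i.e.\ $v(t) \leq \vbar$. The auxiliary function $w(t) := \vbar - z(t)$ satisfies $L(t)w(t) = 0$ and $w(t)|_{\partial\Omega} = \vbar > 0$, so the maximum principle forces $w(t) > 0$ on $\overline\Omega$, which gives $v(t) > 0$. For the strict inequality $v(t) < \vbar$ when $(\hat u(t))_+ \not\equiv 0$, the strong maximum principle applied to $z(t)$ excludes interior zeros: any such zero would force $z(t) \equiv 0$, contradicting $L(t)z(t) \not\equiv 0$.

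Regularity and \eqref{linE:es} follow by elliptic bootstrap. Rewriting the equation as $-\Delta z = \vbar(\hat u)_+ - (\hat u)_+ z$ and using the $L^\infty$-bound $0 \leq z \leq \vbar$, the right-hand side is in $L^p(\Omega)$ with norm $\leq C\vbar \|\hat u(t)\|_{W^{1,p}(\Omega)}$. Since $\partial\Omega \in C^{2,\sigma}$, the standard $L^p$-estimate for the Dirichlet Laplacian places $z(t)$ in $W^{2,p}(\Omega)$ with the same linear bound on its norm. Morrey's embedding ($p>n$) then yields $z(t) \in C^{1,\alpha}(\overline\Omega)$ and $(\hat u(t))_+ \in C^{0,\alpha}(\overline\Omega)$ for $\alpha := 1 - n/p$; hence the right-hand side sits in $C^{0,\alpha}(\overline\Omega)$, and a Schauder estimate (using the boundary regularity $\partial\Omega \in C^{2,\sigma}$ and the constant boundary datum $\vbar$) delivers $z(t) \in C^{2,\sigma_*}(\overline\Omega)$ for some $\sigma_* \in (0,\alpha]$. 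Keeping $\|v(t)\|_\infty \leq \vbar$ as a separate factor, rather than converting it to a Hölder norm in the product estimate for $(\hat u)_+ v$, is what preserves the linear dependence on $\|\hat u(t)\|_{W^{1,p}}$ in the final constant.

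For the time-regularity assertions I would exploit the linearity of the equation: the difference $r(t,s) := v(t) - v(s)$ solves
\begin{equation*}
 -\Delta r + (\hat u(s))_+\, r = -v(t)\bigl((\hat u(t))_+ - (\hat u(s))_+\bigr) \quad \text{in } \Omega, \qquad r|_{\partial\Omega} = 0,
\end{equation*}
so the same $L^p$-plus-Schauder chain applies to $r$. Using the pointwise contraction $|\xi_+ - \eta_+| \leq |\xi - \eta|$ to bound $\|(\hat u(t))_+ - (\hat u(s))_+\|_{L^p} \leq \|\hat u(t) - \hat u(s)\|_{L^p}$, together with $\|v(t)\|_\infty \leq \vbar$ and the continuity of $\hat u$ in $W^{1,p}$, gives the continuity of $t \mapsto v(t)$ in $C^{2,\sigma_*}(\overline\Omega)$. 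Under the $C^1$-hypothesis on $\hat u$ the estimate $\|\hat u(t) - \hat u(s)\|_{W^{1,p}} \leq C|t-s|$ is available on compact subintervals, and propagating it through the same chain yields the Lipschitz assertion $v \in C^{0,1}((0,T); C^{2,\sigma_*}(\overline\Omega))$. The main technical obstacle I anticipate is reaching Lipschitz dependence at the Schauder level: the positive-part map is only $1$-Lipschitz (not smooth), so some care is needed to control the Hölder norm of $(\hat u(t))_+ - (\hat u(s))_+$ by $|t-s|$, and the uniform bound $\|v(t)\|_\infty \leq \vbar$ will be essential to prevent superlinear dependencies in the constants.
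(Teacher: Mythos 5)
Your proposal is correct and follows essentially the same route as the paper: the substitution $z=\vbar-v$ reducing to a homogeneous Dirichlet problem, linear elliptic theory for existence, uniqueness and the bound \eqref{linE:es}, the (strong) maximum principle for $0<v\le\vbar$ and for $v<\vbar$ when $(\hat{u})_+\not\equiv 0$, and the equation for the difference $v(t)-v(s)$ combined with the $1$-Lipschitz property of $(\cdot)_+$ for the time regularity. The only real deviation is that you obtain existence via Lax--Milgram and then bootstrap through $W^{2,p}$ and Morrey embedding, whereas the paper invokes Schauder theory directly (available at once because $W^{1,p}(\Omega)\hookrightarrow C^{0,\sigma_1}(\Ombar)$ for $p>n$ already makes the coefficient $(\hat{u})_+$ Hölder continuous); the difficulty you flag at the end --- controlling the \emph{Hölder} norm of $(\hat{u}(t))_+-(\hat{u}(s))_+$ rather than just its sup norm --- is indeed the delicate point of the time-regularity step, and the paper's own proof passes over it silently.
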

\begin{proof}
It is clear that positive part $(x)_+=\frac{x+|x|}{2}$ is a Lipschitz function. By the Sobolev embedding theorem there exists some $\sigma_1(p,n)\in(0,1)$ such that for each $t\in[0,T]$ we have $\hat{u}(t)\in W^{1,p}(\Omega)\hookrightarrow C^{\sigma_1}(\overline{\Omega})$, and so $(\hat{u}(t))_+\in C^{\sigma_1}(\overline{\Omega})$. The equation is linear, thus by the Schauder estimates there exists a unique solution $v(t)\in C^{2,\sigma_*}(\overline{\Omega})$, with $\sigma_*=\min\{\sigma,\sigma_1\}$. Using the strong maximum principle \cite[Thm.3.5]{GiT}, we also have that $0<v(t)\leq \overline{v}$ in $\Ombar$. If $(u(t))_+\not\equiv 0$, moreover, $v(t)<\vbar$ in $\Omega$.
. If  $t_1,t_2\in[0,T]$, then the difference $v(t_1)-v(t_2)$ satisfies the equation
\begin{align*}
\Delta (v(t_1)-v(t_2))-(\hat{u}(t_1))_+(v(t_1)-v(t_2))=v(t_2)((\hat{u}(t_1))_+-(\hat{u}(t_2))_+)
\end{align*}
with 
$$
(v(t_1)-v(t_2))|_{\partial\Omega}=0,
$$
and using Schauder's estimates and Sobolev's embedding we have
\begin{align*}
\norm{v(t_1)-v(t_2)}_{C^{2,\sigma_*}(\overline{\Omega})}&\leq c_{29}\norm{v(t_2)(\hat{u}(t_1)-\hat{u}(t_2))}_{C^{\sigma_*}(\overline{\Omega})}\\
&\leq c_{30}\norm{v(t_2)}_{C^{\sigma_*}(\overline{\Omega})} \norm{(\hat{u}(t_1)-\hat{u}(t_2))}_{C^{\sigma_*}(\overline{\Omega})}\\
&\leq c_{31}\norm{v(t_2)}_{C^{\sigma_*}(\overline{\Omega})} \norm{(\hat{u}(t_1)-\hat{u}(t_2))}_{W^{1,p}(\Omega)},
\end{align*}
and the continuity follows. Now, by the substitution $z=\overline{v}-v$ we obtain the equation
$$
-\Delta z +(\hat{u})_+z=\overline{v}(\hat{u})_+,\quad z|_{\partial\Omega}=0.
$$ 
By the Schauder estimates and Sobolev embedding we have
$$
\normA[C^{2,\sigma_*}(\overline{\Omega})]{z}\leq c_{28}\vbar\normA[W^{1,p}(\Omega)]{\hat{u}},
$$
using this we obtain \eqref{linE:es}.
Assume now that $\hat{u}\in C^1((0,T);W^{1,p}(\Omega))$ and $t_1,t_2\in(0,T)$ using the above estimates, we obtain the other part of the statement.
\end{proof}

\begin{proof}[Proof of Theorem \ref{systPE:mod:ex}]
We are going to use the Banach fixed point theorem. For $T>0$ and a given function $\hat{u}\in C([0,T],W^{1,p}(\Omega))$ we consider the linear problem  
\begin{equation}\label{linPP}
\left\{ \begin{array}{l}
 u_t+\mathcal{A}u=F(\hat{u}(t),v(t)), \\
 \mathcal{B} u=g(t),\\
 u(0)=u_0
  \end{array} \right.
\end{equation}
where
$$
F(\hat{u}(t),v(t))=-\nabla \hat{u}\cdot\nabla v+(\lambda+1) \hat{u}-(\mu+v) \hat{u}^2,
$$
$$
g(t)=\hat{u}\partial_\nu v|_{\partial\Omega}.
$$
and $v(t)$ solves the elliptic problem (\ref{linE}).  
For $2\alpha\in(1,1+1/p)$, by the generalized variations of constant formula \cite[(11.20)]{Amann1}, we can rewrite problem (\ref{linPP}) into  the following  integral equation
$$
u(t)=e^{-tA_{\alpha-1}}u_0+\int_0^te^{-(t-\tau)A_{\alpha-1}}\left(F(\hat{u}(\tau),v(\tau))+A_{\alpha-1}\mathcal{B}^cg(\tau)\right)d\tau,
$$
thus we define the following operator
$$
\mathcal{T}[\hat{u}](t):=e^{-tA_{\alpha-1}}u_0+\int_0^te^{-(t-\tau)A_{\alpha-1}}\left(F(\hat{u}(\tau),v(\tau))+A_{\alpha-1}\mathcal{B}^cg(\tau)\right)d\tau
$$
for $\hat{u}\in B_T^R=\{f\in C([0,T],W^{1,p}(\Omega)):\,\sup_{t\in[0,T]}\norm{f(t)}_{W^{1,p}(\Omega)}\leq R\}$, with $R>2c_{24}\norm{u_0}_{W^{1,p}(\Omega)}$. Our goal is to show that $\mathcal{T}$ is a contraction in $B_T^R$ with sufficiently small $T$. For $i=1,2$ denote by $v_i(t)$  the solution of (\ref{linE}) with given $\hat{u}_i(t)$, and let $g_i(t)=\hat{u}_i(t)\partial_\nu v_i(t)|_{\partial\Omega}$.  Using the embedding  $L^p(\Omega)\hookrightarrow W^{2\alpha-2,p}_\mathcal{B}(\Omega)$ and Lemma \ref{linElest} we have
\begin{align*}
\norm{\nabla v_1\cdot\nabla\hat{u}_1-\nabla v_2\cdot\nabla\hat{u}_2}_{W^{2\alpha-2,p}_{\mathcal{B}}(\Omega)}&\leq c_{32}\norm{\nabla v_1\cdot\nabla\hat{u}_1-\nabla v_2\cdot\nabla\hat{u}_2}_{L^p(\Omega)}\\
&\leq c_{33} \left(\norm{v_1}_{C^{2,\sigma_*}(\overline{\Omega})}\norm{\hat{u}_1-\hat{u}_2}_{W^{1,p}(\Omega)}+\norm{ \hat{u}_2}_{W^{1,p}(\Omega)}\norm{ v_1- v_2}_{C^{2,\sigma_*}(\overline{\Omega})}\right)\\
&\leq c_{34}(R)\norm{\hat{u}_1-\hat{u}_2}_{W^{1,p}(\Omega)},
\end{align*}
\begin{align*}
\norm{(\lambda+1)(\hat{u}_1-\hat{u}_2)}_{W^{2\alpha-2,p}_{\mathcal{B}}(\Omega)}\leq c_{35}(R) \norm{\hat{u}_1-\hat{u}_2}_{W^{1,p}(\Omega)}, 
\end{align*}
and similarly, using the assumption that $p>n$,
\begin{align*}
\norm{(\mu+v_1)\hat{u}_1^2-(\mu+v_2)\hat{u}_2^2}_{W^{2\alpha-2,p}_{\mathcal{B}}(\Omega)}&\leq c_{36}\left((\mu+\norm{v_1}_\infty)\norm{(\hat{u}_1-\hat{u}_2)(\hat{u}_1+\hat{u}_2)}_{L^p(\Omega)}+\norm{\hat{u}_2^2(v_1-v_2)}_{L^p(\Omega)}\right)\\
&\leq c_{37}\left(\norm{\hat{u}_1-\hat{u}_2}_{W^{1,p}(\Omega)}\norm{\hat{u}_1+\hat{u}_2}_{\infty}+\norm{\hat{u}_2}_{\infty}^2\norm{(v_1-v_2)}_{C^{2,\sigma_*}(\overline{\Omega})}\right)\\
&\leq c_{38}(R)\norm{\hat{u}_1-\hat{u}_2}_{W^{1,p}(\Omega)}.
\end{align*}
Gathering the above estimates leads us to
\begin{equation}\label{N:lest}
\norm{F(\hat{u}_1(t),v_1(t))-F(\hat{u}_2(t),v_2(t))}_{W^{2\alpha-2,p}_{\mathcal{B}}(\Omega)} \leq c_{39}(R)\norm{\hat{u}_1(t)-\hat{u}_2(t)}_{W^{1,p}(\Omega)}.
\end{equation} 
 
By (\ref{bdop:est}) and the trace theorem we obtain 
\begin{align*}
\norm{A_{\alpha-1}\mathcal{B}^c(\hat{u}_1\partial_\nu v_1-\hat{u}_2\partial_\nu v_2)}_{W^{2\alpha-2,p}_{\mathcal{B}}(\Omega)}&\leq c_{40}\left(\norm{(\hat{u}_1-\hat{u}_2)\partial_\nu v_1}_{W^{1-1/p,p}(\partial\Omega)}+\norm{\hat{u}_2\partial_\nu( v_1- v_2)}_{W^{1-1/p,p}(\partial\Omega)}\right)\\
&\leq c_{41}\left(\norm{(\hat{u}_1-\hat{u}_2)|\nabla v_1|}_{W^{1,p}(\Omega)}+\norm{\hat{u}_2|\nabla  (v_1- v_2)|}_{W^{1,p}(\Omega)}\right)\\
&\leq c_{42}\left(\norm{\hat{u}_1-\hat{u}_2}_{W^{1,p}(\Omega)}\norm{v_1}_{C^{2,\sigma_*}(\overline{\Omega})}+\norm{\hat{u}_2}_{W^{1,p}(\Omega)}\norm{v_1-v_2}_{C^{2,\sigma_*}(\overline{\Omega})}\right)\\
&\leq c_{43}(R)\norm{\hat{u}_1-\hat{u}_2}_{W^{1,p}(\Omega)},  
\end{align*}
thus
\begin{equation}\label{BDO:est}
\norm{A_{\alpha-1}\mathcal{B}^cg_1(t)-A_{\alpha-1}\mathcal{B}^cg_2(t)}_{W^{2\alpha-2,p}_{\mathcal{B}}(\Omega)}\leq c_{43}(R)\norm{\hat{u}_1(t)-\hat{u}_2(t)}_{W^{1,p}(\Omega)}
\end{equation}

Using the embedding $W^{\beta,p}(\Omega)\hookrightarrow W^{1,p}(\Omega)$ for $\beta>1$, Lemma \ref{lem:5.2}, (\ref{N:lest}), and (\ref{BDO:est}) (with $\hat{u}_2(t)\equiv 0$) we obtain with $\delta\in(0,1)$ 
\begin{align*}
\norm{\mathcal{T}[\hat{u}](t)}_{W^{1,p}(\Omega)}
&\leq c_{24}\norm{u_0}_{W^{1,p}(\Omega)}+ c_{44}\int_0^t\norm{e^{-(t-\tau)A_{\alpha-1}}\left(F(\hat{u}(\tau),v(\tau))+A_{\alpha-1}\mathcal{B}^cg(\tau)\right)}_{W^{\beta,p}(\Omega)}d\tau\\
&\leq \frac{R}{2}+ c_{45}\int_0^te^{-(t-\tau)\delta}\frac{1}{(t-\tau)^\kappa}\left(\norm{F(\hat{u}(\tau),v(\tau))}_{W^{2\alpha-2,p}_{\mathcal{B}}(\Omega)}+\norm{A_{\alpha-1}\mathcal{B}^cg(\tau)}_{W^{2\alpha-2,p}_{\mathcal{B}}(\Omega)}\right)d\tau\\
&\leq \frac{R}{2}+c_{46}(R)\frac{1}{1-\kappa}T^{1-\kappa},
\end{align*}
and so $\mathcal{T}[B_T^R]\subset B_T^R$ for sufficiently small $T$. 
Proceeding similarly as above we finish with 
\begin{align*}
\norm{\mathcal{T}[\hat{u}_1](t)-\mathcal{T}[\hat{u}_2](t)}_{W^{1,p}(\Omega)}&\leq c_{47}(R)\frac{1}{1-\kappa}T^{1-\kappa}\norm{\hat{u}_1(t)-\hat{u}_2(t)}_{W^{1,p}(\Omega)}
\end{align*}
and thus we see that $\mathcal{T}$ is a contraction when $T$ is small. We found a unique fixed point $u\in B_T^R$ of the integral operator $\mathcal{T}$, and by standard argumentation this fixed point is, in fact, unique in $C([0,T];W^{1,p}(\Omega))$. Continuing our above fixed point argument we obtain a solution defined on the maximal time interval $[0,T_{\max})$, and either $T_{\max}=+\infty$ or  
\begin{equation}\label{Extend:est1}
\lim_{t\nearrow T_{\max}}\normA[W^{1,p}(\Omega)]{u(t)}=+\infty,\text{ if } T_{\max}<+\infty.
\end{equation}
In fact (by \cite{Henry} or \cite{Lunardi}) $u(t)$ is a classical solution, that is we have $u\in C([0,T_{\max});W^{1,p}(\Omega))\cap  C^1((0,T_{\max}),W^{2\alpha-2,p}_{\mathcal{B}}(\Omega))$ and $u$ is the unique solution of the following Cauchy problem 

\begin{equation*}
\left\{\begin{array}{l}
u_t+A_{\alpha-1}u=F(u,v)+A_{\alpha-1}\mathcal{B}^c\left( u\partial_\nu v|_{\partial\Omega}\right),\quad t \in(0,T_{\max})\\
u(0)=u_0.
\end{array}\right. 
\end{equation*}
It follows from inequalities \eqref{N:lest},  \eqref{BDO:est} and Lemma \ref{high:reg} that $u\in  C^1((0,T_{max}),W^{\gamma,p}(\Omega))$ with $\gamma\in(1,2\alpha)$. In particular we have $u\in  C^1((0,T_{max}),W^{1,p}(\Omega))$, and
by Lemma \ref{linElest}, $v(t)\in C^{0,1}((0,T_{\max}),C^{2,\sigma_*}(\overline{\Omega}))$. This regularity is sufficient to show that $u$  in fact is a classical solution. Indeed, let $T_{\max}>T>\delta'>0$, and $\psi\in C^{\infty}(\mathbb{R})$ be a cut-off function such that $\psi(t)=0$ for $t\leq\delta'/2$, $\psi(t)=1$ for $t\geq\delta'$. The function $w=\psi(t) u$ solves the Cauchy problem
\begin{equation}\label{aux:cauchy}
w_t+A_{\alpha-1}w+\nabla w\cdot\nabla v=\tilde{F}+A_{\alpha-1}\mathcal{B}^c\left( w\partial_\nu v|_{\partial\Omega}\right),\quad w(0)=0
\end{equation}
where $\tilde{F}=\psi \left(F(u,v)+\nabla u\cdot\nabla v\right)+\psi_t u \in C^{\sigma',\sigma'/2}(\overline{\Omega}\times[0,T])$ for some $\sigma'\in(0,1).$  Notice that the linear problem 
\begin{equation}\label{aux:Weq}
\left\{\begin{array}{l}
W_t-\Delta W-\nabla v\cdot\nabla W+W=\tilde{F}\\
\partial_\nu W- W\partial_\nu v|_{\partial\Omega}=0\\
W(x,0)=0
\end{array}\right.
\end{equation}
has a unique solution $W\in C^{2+\sigma'',1+\sigma''/2}(\overline{\Omega}\times[0,T])$
by \cite[Corollary 5.1.22]{Lunardi}, which is also a solution of \eqref{aux:cauchy}. Since \eqref{aux:cauchy} has a unique solution, we have that $w=W,$ and thus
$$
u\in C^{2,1}(\overline{\Omega}\times (0,T_{\max})).
$$

Now we show that condition \eqref{Extend:est2} is sufficient to show that $T_{\max}=+\infty$.
Let $W=\psi(t)u$ be as in the above considerations. We know that $W$ is a classical, and thus $W^{2,1}_p(\Omega\times(0,T))$ solution of \eqref{aux:Weq} for any $p\in(1,+\infty)$. By \cite[IV Theorem 9.1]{LSU} we know that 
$$
\normA[W^{2,1}_p(\Omega\times (\delta',T))]{u}\leq \normA[W^{2,1}_p(\Omega\times (0,T))]{W}\leq c_{48}\normA[L^p(\Omega\times (0,T))]{\tilde{F}}\leq c_{49} \sup_{t\in[0,T]}\normA[L^\infty(\Omega)]{u(t)},
$$ 
and from \cite[Lemma II.3.3]{LSU}, for $p>n+2$, we have 
$$
\sup_{t\in[\delta',T]}\normA[W^{1,p}(\Omega)]{u(t)}\leq c_{50}\normA[{C^{1+\sigma''',\frac{1+\sigma'''}{2}}(\overline{\Omega}\times[\delta',T])}]{u}\leq c_{51}\normA[W^{2,1}_p(\Omega\times (\delta',T))]{u}.
$$
We see here that \eqref{Extend:est2} excludes situation \eqref{Extend:est1}.
\end{proof}

\footnotesize
\def\cprime{$'$}

\end{document}